\documentclass[11pt]{article}

\usepackage[T1]{fontenc}
\usepackage[margin=1in]{geometry}
\usepackage{amsmath,amssymb,amsthm}
\usepackage{bbm}
\usepackage{graphicx}
\usepackage[numbers,sort&compress]{natbib}
\usepackage[colorlinks,citecolor=blue,urlcolor=blue]{hyperref}
\usepackage{cleveref}
\newtheorem{theorem}{Theorem}
\newtheorem{lemma}{Lemma}
\newtheorem{proposition}{Proposition}
\newtheorem{corollary}{Corollary}

\crefname{appendix}{Appendix}{Appendices}
\Crefname{appendix}{Appendix}{Appendices}
\crefformat{equation}{(#2#1#3)}
\crefmultiformat{equation}
  {(#2#1#3)}
  { and~(#2#1#3)}
  {, (#2#1#3)}
  {, and~(#2#1#3)}

\newcommand{\R}{\mathbb R}
\newcommand{\E}{\mathbb E}
\newcommand{\Pp}{\mathbb P}
\newcommand{\1}{\mathbbm 1}
\newcommand{\dd}{\,\mathrm d}
\newcommand{\abs}[1]{\lvert #1\rvert}
\newcommand{\norm}[1]{\lVert #1\rVert}
\newcommand{\Dspace}{\mathbb D}
\newcommand{\jumpgen}{\mathcal J}
\newcommand{\jumpgenI}{\mathcal J_I}
\newcommand{\SkMap}{\Gamma_{\mathrm{Sk}}}
\newcommand{\RegMap}{\mathcal R}
\newcommand{\law}{\operatorname{Law}}
\newcommand{\e}{\mathrm e}
\newcommand{\cA}{c_{\mathrm A}}
\newcommand{\Stable}{\mathcal S_\alpha}
\newcommand{\gen}{\mathcal L}

\title{Exponential Ergodicity and Invariant Distribution of a Degenerate Reflected SDE with Stable Noise}
\author{Guodong Pang\thanks{Department of Computational Applied Mathematics and Operations Research, Rice University. Email: \href{mailto:gdpang@rice.edu}{\texttt{gdpang@rice.edu}}.}
\and Wei You\thanks{Department of Industrial Engineering and Decision Analytics, The Hong Kong University of Science and Technology. Email: \href{mailto:weiyou@ust.hk}{\texttt{weiyou@ust.hk}}.}}
\date{}

\begin{document}
\maketitle

\begin{abstract}
We study a two-dimensional degenerate stochastic differential equation with oblique reflection and spectrally positive strictly stable noise of index in $(1,2)$, motivated by join-the-shortest-queue systems with heavy-tailed interarrival times.
A polynomial--exponential Lyapunov function and fixed-time minorization establish positive Harris recurrence and exponential ergodicity in weighted total variation.
We construct the invariant law from a stable path and prove its absolute continuity.
Regulator identities describe stationary dependence and recover the joint Laplace transform from the queue marginal.
Idleness inherits the classical stable Ornstein--Uhlenbeck tail; the queue has a sharp exponential tail with a polynomial prefactor whose power is half the stable index and a positive constant represented through regeneration.
Conditional on small idleness, idleness divided by the conditioning threshold and the queue converge to independent beta and size-biased limits.
Under large-queue conditioning, normalized idleness and queue excess converge to independent Mittag--Leffler and exponential limits.

\medskip
\noindent\textbf{Keywords:} reflected stochastic differential equation, stable L\'evy process, oblique reflection, exponential ergodicity, invariant distribution, join-the-shortest-queue

\smallskip
\noindent\textbf{MSC classification:} Primary: 60H10, 60J25; secondary: 60G52, 60K25
\end{abstract}

\section{Introduction}\label{sec:intro}

In large-scale parallel server queueing networks, 
join-the-shortest-queue (JSQ) routing sends each arriving customer to a server with the fewest customers.
Customers remain at their assigned server: a waiting customer is not transferred when another server becomes idle.
Idle servers and waiting customers can therefore coexist, although arrivals always use an idle server when one is available.
Understanding steady-state congestion thus requires a joint description of idleness and waiting.
For parallel exponential servers with Poisson arrivals, Eschenfeldt and Gamarnik \cite{EschenfeldtGamarnik2018} give such a description via a two-dimensional reflected diffusion in the Halfin--Whitt regime.

Heavy-tailed interarrival times introduce a different source of variability.
A long arrival gap allows many services to finish and can create substantial idleness.
When the interarrival-time tail is regularly varying with index $-\alpha$, where $1<\alpha<2$, stable limits for renewal processes motivate an $\alpha$-stable L\'evy process in place of Brownian noise (\cite{Whitt2002}, Section~6.3).
In the idleness equation, positive jumps represent increases in idle capacity caused by long arrival gaps.
We study a reflected stochastic differential equation (SDE) with this stable input and positive spare-capacity parameter $\beta$.

The routing rule determines how arrival variability affects the two state variables.
Let $I$ denote the scaled number of idle servers and $Q$ the scaled number of servers with at least one waiting customer.
When an idle server is available, arrivals enter service immediately rather than queue.
Arrival fluctuations therefore change $I$ without adding to $Q$, while service completions reduce existing queues.
At the scale motivating our model, service-completion fluctuations vanish, so $Q$ decays deterministically as long as $I>0$.
When no server is idle, further arrivals must wait.
In the limiting SDE, the nondecreasing regulator $U$ can increase only at $I=0$, offsetting any decrease that would make $I$ negative and adding the same amount to $Q$.
Its increments act in direction $(1,1)$, rather than the inward normal $(1,0)$ to the boundary $I=0$, making the reflection \textit{oblique}.
Arrival variability thus affects queue growth through the regulator, although the queue equation has no separate noise term.
We study how quickly this process approaches equilibrium and how arrival fluctuations and reflection shape its joint stationary law.

\subsection{Main results and approach}\label{sec:contributions}

We prove positive Harris recurrence and exponential convergence to equilibrium.
Our convergence estimate allows observables with polynomial growth in idleness and exponential growth in the queue: it holds in the weighted total variation norm for $(1+i)^p\e^{\theta q}$ for every $0<p<\alpha$ and $0<\theta<\theta_c$, where $\theta_c$ is the positive zero of the input's Laplace exponent after adding the spare-capacity drift.
These thresholds also define the exact stationary polynomial--exponential moment domain (\Cref{thm:exponential-ergodicity,thm:invariant-moments}).

We construct a Lyapunov function that combines polynomial growth in idleness with the oblique derivative condition.
Our fixed-time minorization uses boundary contact to transmit input randomness to the queue coordinate; two selected jump sizes then control the two terminal coordinates.
Together with attraction of the zero-input dynamics, these estimates verify the hypotheses of continuous-time Harris theory \cite{MeynTweedie1993Resolvents,DownMeynTweedie1995}.

We use stationary queue balance to identify the regulator occupation measure, whose normalization is the size-biased stationary queue law.
A separate idleness-weighting identity describes how observations change when weighted by idle capacity (\Cref{thm:idleness-weighting}).
The regulator occupation identity expresses the boundary term in the joint-transform equation through the queue marginal.
We use the idleness-weighting identity to obtain a queue density, then compare the stationary process with its reflection-free evolution and use the density of a stable stochastic convolution to prove joint absolute continuity (\Cref{prop:joint-stationary-density}).
Solving the joint-transform equation recovers the joint stationary law from the queue marginal (\Cref{thm:joint-transform-reconstruction}).

A complementary pathwise construction realizes a stationary state from a stable path on a fixed finite interval.
Multiplying the queue coordinate by an exponential factor and changing time removes its linear decay.
Successive integration and running-supremum operations then give a convergent recursion (\Cref{thm:stationary-representation}).
Using the same stable path for different values of spare capacity $\beta$ couples the stationary queue laws: the queue is stochastically nonincreasing in  $\beta$, and its mean is convex in $\beta$.
Thus equal increases in spare capacity yield nonincreasing reductions in the mean queue coordinate (\Cref{prop:queue-parameter-dependence}).

The tail analysis shows how input asymmetry appears in stationarity.
A convolution representation separates a stable Ornstein--Uhlenbeck (OU) term from an exponentially integrable correction.
Idleness therefore inherits the power-law tail of the stable OU law, with constant $c_{\mathrm A}/\alpha^2$ under our normalization \cite{Sato1999,HultLindskog2006}.
In contrast, the queue tail is asymptotic to $C_\pi x^{\alpha/2}\e^{-\theta_c x}$ (\Cref{thm:marginal-tails}).
The Laplace exponent of the input with spare-capacity drift determines the exponential rate, while the oblique derivative condition determines the polynomial power.
A regenerative decomposition represents the positive constant $C_\pi$.
Conditioning on a large queue yields independent Mittag--Leffler and exponential limits for rescaled idleness and queue excess, with typical idleness smaller than its conditional-mean scale (\Cref{cor:large-queue-idleness}).

Near the reflecting face, conditioning on $I\le\varepsilon$ yields a joint limit: $I/\varepsilon$ becomes asymptotically independent of a queue with the size-biased stationary law as $\varepsilon\downarrow0$ (\Cref{thm:boundary-asymptotics}).
The size-biased queue law also arises by normalizing the regulator occupation measure, linking boundary conditioning to stationary balance identities.

\subsection{Related literature}\label{sec:related-literature}

The Brownian JSQ diffusion is the closest benchmark.
Our nonnegative idleness coordinate is the negative of the centered busy-server coordinate in the Brownian formulation \cite{EschenfeldtGamarnik2018}.
Braverman \cite{Braverman2020} proves exponential ergodicity of this diffusion and tightness of the scaled stationary distributions of the queueing systems, using drift-based fluid limit Lyapunov functions within the generator expansion framework.
Banerjee and Mukherjee \cite{BanerjeeMukherjee2019} construct regenerative cycles and obtain Gaussian-type bounds for the idleness tail and exponential-type bounds for the queue tail.
Their subsequent work studies stationary moments and sensitivity to the spare-capacity parameter \cite{BanerjeeMukherjee2020}.
The regulator and mixed-moment relations in that work, together with exponential integrability of the queue, imply the Brownian counterparts of our regulator-occupation and idleness-weighting identities.
These identities describe structure shared by the Brownian and stable-driven equations.

Our stationary construction is related to the backward construction of Loynes \cite{Loynes1962} and to the correspondence between self-similarity and stationarity associated with Lamperti \cite{Lamperti1962}.
Transform methods and asymptotic analysis for L\'evy-driven queues are surveyed and developed by D{\k{e}}bicki and Mandjes \cite{DebickiMandjes2012,DebickiMandjes2015}.
Kella \cite{Kella1996} studies stability and nonproduct stationary distributions for fluid networks with L\'evy inputs.
Konstantopoulos et~al. \cite{KonstantopoulosLastLin2004} establish stability and stationary equations for multidimensional L\'evy networks with oblique reflection.

For unreflected L\'evy-driven OU processes, Wang \cite{Wang2012} gives conditions for exponential ergodicity.
Arapostathis et~al. \cite{ArapostathisPangSandric2019} identify polynomial and exponential convergence regimes for piecewise OU processes arising from many-server queues, including sharp polynomial rates in the absence of abandonment under their assumptions.
Related controlled models admit polynomial convergence bounds uniform over suitable classes of stationary Markov controls \cite{ArapostathisHmediPangSandric2019uniform}.
For a one-dimensional reflected OU process with spectrally positive L\'evy input, Xing et~al. \cite{XingZhangWang2009} obtain stationary-distribution and transform results; their existence theorem assumes a nonzero Brownian component.

For constrained Brownian diffusions, Atar et~al. \cite{AtarBudhirajaDupuis2001} give sufficient conditions for positive recurrence, whereas Kang and Ramanan \cite{KangRamanan2014} characterize stationary distributions through a submartingale formulation.
Quantitative convergence results are also available for reflected processes.
Sarantsev \cite{Sarantsev2016} obtains explicit exponential convergence rates for reflected jump-diffusions on the half-line under conditions including exponential moments of the jumps.
Huang and Pang \cite{HuangPang2025} establish exponential convergence in total variation and Wasserstein distance for a two-dimensional OU process with normal reflection and nondegenerate Brownian input.
Our process instead has pure stable noise acting only on idleness and a common oblique regulator.

More broadly, this work relates to the ergodicity of degenerate SDEs, where random forcing acts only in certain directions.
For uniformly elliptic diffusions, regularity and Harnack estimates provide routes to transition-density bounds used to verify irreducibility and minorization.
When the noise is degenerate, estimates based on uniform ellipticity no longer apply directly, and a central question is how the dynamics transmit randomness to the unforced coordinates.
In many unreflected models, coupling through the drift provides this transmission: perturbations of the forced variables influence the remaining coordinates as the system evolves.
Control-theoretic approaches to recurrence and invariant measures for degenerate diffusions are developed by Kliemann \cite{Kliemann1987} and Arnold and Kliemann \cite{ArnoldKliemann1987}.

For hypoelliptic particle models, Mattingly and Stuart \cite{MattinglyStuart2002} combine Lyapunov estimates with transition-density and accessibility arguments to prove geometric ergodicity.
Mattingly et~al. \cite{MattinglyStuartHigham2002} develop related arguments for SDEs and analyze when numerical approximations preserve ergodicity.
These approaches combine control of large excursions with local estimates ensuring that trajectories from different initial states can reach common regions of the state space.

The propagation of randomness is also central to stochastic fluid dynamics with degenerate forcing.
Weinan E and Jonathan C. Mattingly \cite{EMattingly2001} prove exponential ergodicity for finite-dimensional Galerkin approximations of the two-dimensional Navier--Stokes equations.
For the infinite-dimensional equations, Hairer and Mattingly \cite{HairerMattingly2006} establish unique ergodicity on the relevant invariant subspace using the asymptotic strong Feller property and a weak form of irreducibility.
For unreflected degenerate jump-driven SDEs, related regularity questions are studied under H\"ormander-type conditions by Zhang \cite{Zhang2014} and, using a parametrix approach, by Huang and Menozzi \cite{HuangMenozzi2016}.

In our model, reflection rather than the interior drift transmits randomness to the queue coordinate.
While the regulator is inactive, the queue decays deterministically; randomness reaches that coordinate through regulator increments at the reflecting face.
Our minorization construction makes this transmission quantitative: boundary contact followed by reflection-free evolution yields a common lower bound with a two-dimensional density.
The accompanying Lyapunov argument accommodates both the heavy-tailed jumps and the oblique derivative condition.

\subsection{Organization and notation}
\Cref{sec:model} specifies the SDE and states the main results.
\Cref{sec:lyapunov,sec:harris} develop the Lyapunov and minorization arguments, and \Cref{sec:stationary-law} characterizes the invariant law.
\Cref{sec:tails} establishes the tail and boundary asymptotics, and \Cref{sec:discussion} concludes.
\Cref{app:fixed-time-minorization} gives the pathwise comparison and minorization proofs, and \Cref{app:queue-tail-estimates} gives the supporting queue-tail estimates.
\Cref{app:pathwise,app:stable-input,app:stationary-localization,app:further-invariant-law} contain the well-posedness and stable-input details, localization arguments, and further properties of the invariant law.

We write $E=\R_+^2$, where $\R_+=[0,\infty)$, set $x^+=\max\{x,0\}$, and use the Euclidean norm unless the $\ell^1$ norm $\|(i,q)\|_1=|i|+|q|$ is specified.
We write $A\Subset B$ when $\overline A$ is compact and contained in $B$.
For $S\subseteq\R^d$, the space $\Dspace([0,T],S)$ consists of c\`adl\`ag paths with values in $S$, and $\|x\|_T=\sup_{0\le s\le T}\|x(s)\|$ denotes the uniform norm.
We use $\Longrightarrow$ for convergence in distribution.
By $C^2(E)$ we mean restrictions to $E$ of $C^2$ functions on a neighborhood of $E$; derivatives at the boundary are their one-sided limits from the interior.
Constants may depend on the fixed model parameters; additional dependencies are indicated when relevant.
For a measure $\mu$ and a $\mu$-integrable function $f$, write $\mu(f):=\int f\,\dd\mu$.
For positive functions, $f\sim g$ means that $f/g\to1$ in the stated limit, whereas $f\asymp g$ means that their ratio is bounded above and below by positive constants; subscripts on $\asymp$ indicate which parameters the comparison constants may depend on.

\section{The reflected SDE and main results}\label{sec:model}

\subsection{Dynamics and pathwise properties}

Fix $\alpha\in(1,2)$, $\beta>0$, and $c_{\mathrm A}>0$.
Let $\Stable$ be a spectrally positive strictly $\alpha$-stable L\'evy process started at zero, with L\'evy measure
\begin{equation*}
\nu_{\mathrm A}(\dd z)
=c_{\mathrm A}z^{-1-\alpha}\1_{(0,\infty)}(z)\dd z.
\end{equation*}
The coefficient $c_{\mathrm A}$ sets the stable input scale.
Since $\alpha>1$, the input has a finite first absolute moment, and strict stability implies $\E[\Stable(t)]=0$.
For an initial state $Y_0=(I_0,Q_0)\in E$, independent of future increments of $\Stable$, consider the reflected SDE for $Y=(I,Q)$:
\begin{equation}\label{eq:reflected-sde}
\begin{split}
I_t&=I_0+\Stable(t)+\int_0^t(\beta-I_s-Q_s)\dd s+U_t,\\
Q_t&=Q_0-\int_0^tQ_s\dd s+U_t.
\end{split}
\end{equation}
The regulator $U$ is nondecreasing, $U_0=0$, and
\begin{equation}\label{eq:regulator-condition}
I_t\ge0,
\qquad
\int_0^T\1_{\{I_t>0\}}\dd U_t=0,
\qquad T<\infty.
\end{equation}
It can increase only on the face $I=0$ and contributes equally to both coordinates, giving the reflection direction $(1,1)$.
The inward normal is $(1,0)$; the queue displacement makes the reflection oblique.

Two identities describe how reflection affects the state.
Variation of constants in the queue equation yields
\begin{equation}\label{eq:queue-variation-of-constants}
Q_t=\e^{-t}Q_0+\int_{(0,t]}\e^{-(t-s)}\dd U_s\ge0.
\end{equation}
Thus $Q$ decays exponentially while $U$ is constant, and regulator increments contribute through the exponential kernel.
Reflection acts only at $I=0$; the formula ensures nonnegativity of $Q$ without a second regulator at $Q=0$.
The common regulator cancels from the coordinate difference, giving
\begin{equation}\label{eq:regulator-cancellation}
I_t-Q_t
=I_0-Q_0+\Stable(t)+\int_0^t(\beta-I_s)\dd s.
\end{equation}
This gives the stationary mean-idleness balance and is used in the stationary construction and tail analysis.

Let $y=(i,q)\in E$ be a deterministic state and use the superscript $y$ to indicate a solution started from $y$.
\begin{proposition}[Well-posedness and Markov properties]\label[proposition]{prop:well-posedness}
For every initial state $y\in E$, \cref{eq:reflected-sde,eq:regulator-condition} have a pathwise unique, nonexplosive strong solution $Y^y=(I^y,Q^y)$.
The process is c\`adl\`ag, Feller--Dynkin, Borel right, and strong Markov.
Both $U$ and $Q$ are continuous.
\end{proposition}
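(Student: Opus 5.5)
We reduce the reflected SDE to a deterministic fixed-point problem driven by a given càdlàg input path, and solve it with the one-dimensional Skorokhod map. Write $X(t)=i+\Stable(t)$ and fix a path of $\Stable$. For a candidate pair $(I,Q)$ the idleness equation reads $I=\SkMap(Z)$ with
\[
Z_t=i+\Stable(t)+\int_0^t(\beta-I_s-Q_s)\dd s,
\]
and the regulator is given by $U_t=\sup_{s\le t}(-Z_s)^+$. The queue is then recovered from \cref{eq:queue-variation-of-constants}. Substituting, we obtain a map $\Phi$ on pairs $(I,Q)\in\Dspace([0,T],E)$:
\[
\Phi(I,Q)=\Bigl(\SkMap(Z),\ \e^{-\cdot}q+\int_{(0,\cdot]}\e^{-(\cdot-s)}\dd U_s\Bigr).
\]
The Skorokhod map is Lipschitz with constant $2$ in the uniform norm, and $U\mapsto\int\e^{-(t-s)}\dd U_s=U_t-\int_0^t\e^{-(t-s)}U_s\dd s$ (integrate by parts) is Lipschitz with constant $2$. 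Hence
\[
\norm{\Phi(Y)-\Phi(Y')}_t\le C\int_0^t\norm{Y-Y'}_s\dd s .
\]
A Picard iteration with a Gronwall argument then gives existence and uniqueness of a solution on each $[0,T]$, built as a measurable functional of $(y,\Stable|_{[0,T]})$. Consequently it is a strong solution and pathwise unique.

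Nonexplosion is immediate, since the solution exists on every finite interval and the linear bound gives
\[
\norm{Y}_T\le C_T\bigl(\abs{i}+q+\beta T+\norm{\Stable}_T\bigr).
\]
Continuity of $U$ follows because $\Stable$ is spectrally positive: $Z$ has only upward jumps, so $\sup_{s\le t}(-Z_s)^+$ cannot jump. Continuity of $Q$ then follows from \cref{eq:queue-variation-of-constants}, and $I=Z+U$ is càdlàg.

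For the Markov properties, pathwise uniqueness together with the independent stationary increments of $\Stable$ yields the flow property $Y^y_{t+s}=\Phi_s(Y^y_t,\theta_t\Stable)$. This gives the Markov property, and applying it at stopping times (strong Markov property of the Lévy process) gives the strong Markov property.

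The Feller--Dynkin property has two parts.
\begin{itemize}
\item The same Gronwall estimate applied to two initial states gives the pathwise Lipschitz bound $\norm{Y^y-Y^{y'}}_T\le C_T\norm{y-y'}$. Dominated convergence then shows that $P_t$ maps $C_0(E)$ into continuous functions.
\item To show $P_tf$ vanishes at infinity, we use the comparison $\norm{Y^y_t}\ge c_t\norm{y}-C(\norm{\Stable}_t+1)$. This comes from the fact that $Q_t\ge\e^{-t}q$ and, by \cref{eq:regulator-cancellation}, $I_t\ge I_t-Q_t\ge \e^{-t}i$ minus bounded terms when $q$ is small. This lower bound needs some care and a case split between large $i$ and large $q$.
\end{itemize}
Stochastic continuity at $t=0$ comes from right continuity of paths. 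Being Feller--Dynkin with càdlàg paths, the process is a Borel right process by standard theory.

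The main obstacle is this "vanishing at infinity" step. The other steps are routine consequences of the Skorokhod map's Lipschitz property.
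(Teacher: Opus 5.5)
Your proposal is correct and takes essentially the same route as the paper. The paper also solves a pathwise fixed point through a Skorokhod map with a Lipschitz/Gronwall estimate, obtains continuity of $U$ from the absence of negative jumps, and derives the strong Markov property from pathwise uniqueness together with the strong Markov property of the L\'evy input. It likewise proves the Feller--Dynkin property from Lipschitz dependence on $y$ plus the case split between large $q$ (where $Q_T\ge\e^{-T}q$) and bounded $q$ with large $i$ (where no reflection occurs on $[0,T]$); this no-reflection argument, rather than the regulator-cancellation identity alone, is what makes the lower bound you flagged rigorous. The remaining differences are cosmetic: you use the one-dimensional Skorokhod map with variation of constants for $Q$ instead of the paper's two-dimensional oblique map, and you obtain strong continuity from pointwise continuity at $t=0$ via the standard Feller--Dynkin lemma, whereas the paper checks uniform convergence directly.
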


For two solutions driven by the same input, the initial $\ell^1$ separation bounds both their later distance and the accumulated absolute difference in their idleness.
\begin{proposition}[Nonexpansiveness in the initial state]\label[proposition]{prop:initial-nonexpansiveness}
For any common c\`adl\`ag input starting at zero and having no negative jumps, the corresponding solutions satisfy
\begin{equation}\label{eq:initial-nonexpansive}
\|Y_t^y-Y_t^{\widetilde y}\|_1
+\int_0^t|I_s^y-I_s^{\widetilde y}|\dd s
\le\|y-\widetilde y\|_1,
\qquad t\ge0,\quad y,\widetilde y\in E.
\end{equation}
\end{proposition}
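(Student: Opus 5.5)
The plan is a pathwise $\ell^1$-contraction computation for the differences of the two solutions. Because the input is common, it cancels, and the differences become continuous functions of bounded variation to which the ordinary chain rule applies. Fix the input path and write $Y=Y^y$, $\widetilde Y=Y^{\widetilde y}$, with regulators $U,\widetilde U$. Set
\[
D_t=I_t-\widetilde I_t,\qquad G_t=Q_t-\widetilde Q_t,\qquad V_t=U_t-\widetilde U_t .
\]
Subtracting the two copies of \cref{eq:reflected-sde} gives
\begin{equation*}
D_t=D_0-\int_0^t(D_s+G_s)\dd s+V_t,\qquad G_t=G_0-\int_0^tG_s\dd s+V_t .
\end{equation*}
By \Cref{prop:well-posedness}, $U$ and $\widetilde U$ are continuous and nondecreasing, so $V$ is continuous of bounded variation. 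Hence $D$ and $G$ are continuous of bounded variation. The jumps of $I$ and $\widetilde I$ are exactly the common input jumps, so they cancel in $D$.

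For continuous BV $f$ one has $|f_t|=|f_0|+\int_0^t\operatorname{sgn}(f_s)\dd f_s$ with $\operatorname{sgn}(0)=0$. This uses the fact that the measure $\dd f$ charges no level set $\{f=c\}$, which follows from the chain rule $\int g(f)\dd f=G(f_t)-G(f_0)$ applied with $g=\1_{\{c\}}$, whose primitive is $G\equiv0$. Applying this to $D$ and $G$ gives
\[
\dd|D|=-\operatorname{sgn}(D)(D+G)\dd t+\operatorname{sgn}(D)\dd V,
\qquad
\dd|G|=-|G|\dd t+\operatorname{sgn}(G)\dd V .
\]

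The key step is to control the regulator terms using the boundary condition \cref{eq:regulator-condition}. There are two cases.

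\emph{Off $\{D=0\}$.} The measure $\dd U$ lives on $\{I=0\}$, where $D=-\widetilde I\le0$. On $\{D\ne0\}$ this forces $D<0$ on the support of $\dd U$. Symmetrically, $D>0$ on the support of $\dd\widetilde U$ within $\{D\neq0\}$. Therefore
\[
\operatorname{sgn}(D)\dd V=-(\dd U+\dd\widetilde U)\le-|\dd V| \quad\text{on } \{D\ne0\},
\]
which absorbs the term $\operatorname{sgn}(G)\dd V\le|\dd V|$.

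\emph{On $\{D=0\}$.} Since $\dd D$ does not charge $\{D=0\}$, the equation for $D$ gives $\dd V=G\dd t$ there. Then $\operatorname{sgn}(G)\dd V=|G|\dd t$, which is cancelled by the $-|G|\dd t$ in $\dd|G|$. Also $\operatorname{sgn}(D)=0$ on this set, so $\dd|D|$ contributes nothing there.

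On $\{D\ne0\}$, the remaining drift terms are $-|D|\dd t-\operatorname{sgn}(D)G\dd t-|G|\dd t\le-|D|\dd t$. Combining both cases,
\[
\dd\bigl(|D|+|G|\bigr)\le-|D|\dd t
\]
as measures on $[0,\infty)$. Integrating from $0$ to $t$ yields \cref{eq:initial-nonexpansive}, because $|D_0|+|G_0|=\|y-\widetilde y\|_1$.

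The main obstacle is the set $\{D=0\}$, where the sign argument for the regulators breaks down: both $U$ and $\widetilde U$ may increase there, for instance when both idleness processes sit at $0$. The plan handles it through the identity $\dd V=G\dd t$ on $\{D=0\}$, which rests on the no-charge property of $\dd D$ on level sets. That property holds because $D$ is continuous, and this continuity is exactly where the hypotheses are used: the input is common and has no negative jumps, so the regulators are continuous. I would state the level-set lemma for continuous BV functions separately before running the computation.
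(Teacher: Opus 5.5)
Your proof is correct and follows essentially the same route as the paper's. Both arguments use the chain rule for the continuous finite-variation differences, control the sign of the regulator term on $\{D\ne0\}$ because $\dd U$ and $\dd\widetilde U$ are carried by the respective zero sets, and use the level-set identity $\dd V=G\dd t$ on $\{D=0\}$, which you justify more carefully than the paper does. The only adjustment is that the statement allows an arbitrary input, so you should argue continuity of the regulators directly rather than cite \Cref{prop:well-posedness}, which concerns the stable input: a jump $\Delta U_t>0$ would give $I_t=I_{t-}+\Delta U_t+(\text{input jump})>0$, contradicting \cref{eq:regulator-condition}.
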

\Cref{app:pathwise} proves \Cref{prop:well-posedness} and gives the finite-horizon estimate for changes in the input path.
\Cref{app:initial-nonexpansiveness} proves \Cref{prop:initial-nonexpansiveness}.

\subsection{Invariant distribution and exponential convergence}

Mean reversion and reflection bring the process back to compact sets, which establishes positive Harris recurrence.
\begin{proposition}[Positive Harris recurrence]\label[proposition]{prop:positive-harris-recurrence}
The process $Y$ is positive Harris recurrent and has a unique invariant probability measure $\pi$.
\end{proposition}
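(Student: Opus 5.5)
The plan follows the Meyn--Tweedie route for continuous-time processes \cite{MeynTweedie1993Resolvents,DownMeynTweedie1995}. We need two ingredients: a Foster--Lyapunov drift condition toward a compact set, and the fact that compact sets are petite. By \Cref{prop:well-posedness} the process is a Borel right, Feller, strong Markov process on $E$, so the abstract theory applies. Once the process is shown to be $\psi$-irreducible and aperiodic, every compact set is petite. A drift inequality $\gen V\le -cV+b\1_K$ with $K$ compact then gives positive Harris recurrence and a unique invariant probability measure $\pi$.

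\textbf{Lyapunov function.} Take $V(i,q)=(1+i)^p\,\phi(i,q)$ with $1<p<\alpha$, or the simpler $V(i,q)=(1+i+\kappa q)^p$ modified near the face so that the oblique derivative condition $(\partial_i+\partial_q)V\le 0$ holds on $\{i=0\}$. The regulator term $\int (\partial_i V+\partial_q V)(Y_{s})\dd U_s$ in It\^o's formula then has a good sign.

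A natural candidate combines idleness and queue as $W=i+\gamma q$ with $\gamma\in(0,1)$, smoothed near $i=0$. Its drift is $\beta-i-q-\gamma q$ in the interior. On the face the regulator contributes $(1+\gamma)\dd U\ge0$, which is the wrong sign. To fix this, use $W=(i-q)+Cq$ modified: by \cref{eq:regulator-cancellation} the regulator cancels in $i-q$. However, $i-q$ can be negative, so we instead use $V=f(i)+g(q)$ with $f'(0)=0$ and $g$ convex. The point is that $Q$ receives the regulator while $I=0$ contributes nothing.

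The concrete choice is $V(i,q)=(1+i^2)^{p/2}+\kappa\,\e^{\theta q}$, with a small $\theta$ and a check of the boundary term. Since $\dd U$ increases only when the stable input pushes $I$ below zero, the expected regulator growth over a unit interval is bounded by a constant plus a multiple of $\int Q\,\dd s$, via the identity for $I-Q$. Controlling this term is the main obstacle. I would handle it by using the generator of $Q$ alone, $\dd \e^{\theta Q}=\theta \e^{\theta Q}(-Q\dd t+\dd U)$, together with a bound $\E[\int_0^t \e^{\theta Q_s}\dd U_s]\lesssim \E\int_0^t \e^{\theta Q_s}(\text{const})\dd s$. That bound should follow because, while $I=0$, the regulator rate is at most $(Q-\beta)^+$ plus the compensated input; this gives the requirement $\theta<\theta_c$.

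For the polynomial part, the stable generator acting on $(1+i^2)^{p/2}$ is $O((1+i)^{p-\alpha})$ since $p<\alpha$. The drift $-i(1+i^2)^{p/2-1}i$ dominates for large $i$, and the $-Q$ term helps as well.

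\textbf{Irreducibility and petiteness.} Use the zero-input dynamics: from any state the deterministic flow is attracted to a fixed point $y^*$ with $q=0$ and $i=\beta$ approximately, uniformly on compacts. Combined with the nonexpansiveness of \Cref{prop:initial-nonexpansiveness} and the small-ball probability of $\Stable$, every compact set reaches a neighborhood of $y^*$ with uniformly positive probability. A minorization at a fixed time then gives a common absolutely continuous lower bound: boundary contact lets a jump create regulator mass that moves $Q$, and a second jump sets $I$. Hence compacts are small, the process is aperiodic, and Harris theory yields the claim.
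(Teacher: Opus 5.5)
Your overall frame matches the paper's. You use Harris theory via Down--Meyn--Tweedie, and you get smallness of compacts, irreducibility and aperiodicity from attraction of the zero-input flow to $(\beta,0)$, nonexpansiveness, the small-ball bound, and a two-jump minorization through boundary contact. The gap is the drift condition. Your concrete $V(i,q)=(1+i^2)^{p/2}+\kappa\e^{\theta q}$ has oblique derivative $\kappa\theta\e^{\theta q}>0$ on $\{i=0\}$. So It\^o's formula contains the extra nondecreasing term $\kappa\theta\int_0^t\e^{\theta Q_s}\dd U_s$. Then $V$ is not in the extended-generator domain with generator $\gen V$, and no inequality for the interior expression $\gen V$ controls $\E_yV(Y_t)$.

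Your repair, $\E\int_0^t\e^{\theta Q_s}\dd U_s\lesssim\E\int_0^t\e^{\theta Q_s}\dd s$, fails. As your own heuristic says, on the face the regulator grows at rate about $Q-\beta$. This is exact for the zero-input dynamics from $(0,q)$ with $q>\beta$, where $\dot Q=-\beta$ and $\dot U=Q-\beta$. Hence $\e^{\theta Q}\dd U$ is of order $Q\e^{\theta Q}\dd t$, which is unbounded relative to $\e^{\theta Q}\dd t$. This term cancels the interior decay $-\kappa\theta Q\e^{\theta Q}$ to leading order; in stationarity the cancellation is exact, since $\E_\pi\int_0^1\e^{\theta Q_s}\dd U_s=\E_\pi[Q\e^{\theta Q}]$. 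So the sign of the net drift of $\e^{\theta Q}$ is decided by how the input moves $Q-I$ along the face. A function additive in $i$ and $q$ with $f'(0)=0$ cannot see this, and nothing in your argument produces the threshold $\theta<\theta_c$.

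The missing idea is to make the oblique derivative vanish identically. That forces the idleness part to have slope $-\partial_qV$ at $i=0$, and with an exponential queue weight this slope is unbounded in $q$, so the weight must multiply. The paper uses $V(i,q)=\e^{\theta q}g(i)$ with $g=\e^{-\theta i}+\varepsilon\psi_p$ and $\psi_p(i)=i^2(1+i)^{p-2}$, so that $g(0)=1$ and $g'(0)=-\theta$. Since $\jumpgenI\e^{-\theta i}=\kappa_\alpha\theta^\alpha\e^{-\theta i}$,
\[
\frac{\gen V}{V}(i,q)=\frac{(\beta-i)g'(i)+\jumpgenI g(i)}{g(i)}-q\,\frac{\varepsilon[\psi_p'(i)+\theta\psi_p(i)]}{g(i)}.
\]
The first term equals $\Phi(\theta)+\varepsilon\jumpgenI\psi_p(0)$ at $i=0$, which is negative for small $\varepsilon$ precisely because $\theta<\theta_c$, and it tends to $-p$ as $i\to\infty$. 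The nonnegative coefficient of $-q$ handles intermediate $i$ with large $q$. This yields $\gen V\le-\delta V+b\1_K$ with $K$ compact, and an unstopped Dynkin identity for $V$ puts it in the form Down--Meyn--Tweedie require.

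For positive Harris recurrence alone, an additive function does work if the queue part is linear. Take $V=q+\lambda^{-1}\e^{-\lambda i}+\varepsilon\psi_p(i)$ with $0<p<\alpha$, $0<\lambda<\theta_c$ and $\varepsilon>0$ small. It has zero oblique derivative, satisfies $\gen V(0,q)=\Phi(\lambda)/\lambda+\varepsilon\jumpgenI\psi_p(0)<0$ for every $q$, and gives $\gen V\le-c+b\1_K$. In both constructions the idleness part needs negative slope at the face; your choice $f'(0)=0$ goes the wrong way.
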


Use $\Pp_\pi,\E_\pi$ for probabilities and expectations when $Y_0$ has law $\pi$, independently of future input increments.
In stationary probabilities and expectations without time subscripts, $(I,Q)$ denotes a random vector with law $\pi$; let $\pi_Q$ be the law of $Q$ and set $m_Q:=\E_\pi Q$.
The stationary balances in \Cref{prop:regulator-occupation} give $\E_\pi I=\beta$, $0<m_Q<\infty$, and $\E_\pi U_t=tm_Q$.
Thus the spare-capacity parameter fixes mean idleness, and the stationary mean of $Q$ equals the mean regulator growth rate.

To quantify convergence to $\pi$, we use polynomial control of idleness and exponential control of the queue.
These growth rates reflect the input's power-law positive jumps and finite exponential moments in the negative direction.
For $p,\theta\ge0$, define the weight function $W_{p,\theta}:E\to[1,\infty)$ by
\[
W_{p,\theta}(y):=(1+i)^p\e^{\theta q},\qquad y=(i,q)\in E.
\]
Since $\alpha>1$, the estimate gives exponential convergence of mean idleness, the mean queue coordinate, and the mixed moment $\E_y[I_tQ_t]$ to their stationary values.

Let $P_t(y,\cdot)$ be the transition law, let $\Pp_y,\E_y$ denote probability and expectation from $y$, and write $P_tf(y):=\E_y[f(Y_t)]$ for measurable $f$ when this expectation is well defined.
For a signed measure $\mu$ with $\int_E W_{p,\theta}(y)\,|\mu|(\dd y)<\infty$, where $|\mu|$ is its total variation measure, define the weighted total variation norm
\[
\|\mu\|_{p,\theta}:=\sup_{|h|\le W_{p,\theta}}|\mu(h)|,
\]
where the supremum is over measurable functions.
Since $W_{p,\theta}\ge1$, this norm controls both probabilities and expectations of functions $h$ satisfying $|h|\le W_{p,\theta}$.

The input's Laplace exponent determines the range of exponential weights:
\begin{equation}\label{eq:stable-laplace-exponent}
\log\E\e^{-\theta\Stable(t)}
=t\kappa_\alpha\theta^\alpha,
\qquad t,\theta\ge0,
\end{equation}
where $\kappa_\alpha:=c_{\mathrm A}\Gamma(2-\alpha)/[\alpha(\alpha-1)]>0$.
Including the spare-capacity drift gives
\[
\log\E\e^{-\theta(\Stable(t)+\beta t)}
=t\Phi(\theta),
\quad \text{where} \quad 
\Phi(\theta):=\kappa_\alpha\theta^\alpha-\beta\theta.
\]
This exponent $\Phi(\theta)$ is negative precisely for $0<\theta<\theta_c$, where
\[
\theta_c:=\left(\frac{\beta}{\kappa_\alpha}\right)^{1/(\alpha-1)}
\]
is its unique positive root.
The next result gives exponential convergence for queue weights $\e^{\theta q}$ in this range.

\begin{theorem}[Exponential ergodicity in weighted total variation]\label[theorem]{thm:exponential-ergodicity}
For every $p\in(0,\alpha)$ and $\theta\in(0,\theta_c)$, there are constants $M_{p,\theta}<\infty$ and $\rho_{p,\theta}>0$ such that
\[
\|P_t(y,\cdot)-\pi\|_{p,\theta}
\le M_{p,\theta}W_{p,\theta}(y)\e^{-\rho_{p,\theta}t},
\qquad y\in E,\quad t\ge0.
\]
\end{theorem}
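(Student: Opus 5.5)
The plan is to invoke the continuous-time Harris theorem of Down--Meyn--Tweedie \cite{DownMeynTweedie1995}. It needs two ingredients. First, a Lyapunov function $V$ with $V\asymp W_{p,\theta}$ satisfying a geometric drift condition $\gen V\le -cV+b\1_C$ for a compact $C$ (in the extended-generator sense, with reflection accounted for). Second, every compact set must be petite; we verify this through a fixed-time minorization $P_{t_0}(y,\cdot)\ge\varepsilon\nu(\cdot)$ for $y\in C$, together with aperiodicity, which is automatic for a fixed-time minorization. Positive Harris recurrence and uniqueness of $\pi$ are already given by \Cref{prop:positive-harris-recurrence}, so the theorem reduces to these two estimates. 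Theorem~5.2 of \cite{DownMeynTweedie1995} then gives $\|P_t(y,\cdot)-\pi\|_{V}\le M V(y)\e^{-\rho t}$, and comparability of $V$ with $W_{p,\theta}$ transfers this to the stated norm.

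\textbf{Lyapunov function.} A pure product $(1+i)^p\e^{\theta q}$ is not enough, for two reasons. The oblique condition at $i=0$ requires $(\partial_i+\partial_q)V\le0$ so that regulator increments do not increase $V$, but the product gives $(p+\theta)\e^{\theta q}>0$ there. Also, the $q$-decay alone gives $-\theta q V$, which is weak when $q$ is small. The cancellation identity \eqref{eq:regulator-cancellation} suggests building the exponential part on $q-i$ (or $q-\lambda i$). The choice I would try first is
\[
V(i,q)=(1+i)^p+ \e^{\theta(q-i)}\phi(i)+K,
\]
where $\phi$ is a smooth cutoff used to keep $V\gtrsim W_{p,\theta}$ while retaining $(\partial_i+\partial_q)\e^{\theta(q-i)}=0$ on the face. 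One may need a polynomial correction to $V$ so that $V\asymp W_{p,\theta}$ holds in the joint regime where both $i$ and $q$ are large. For the exponential term, the generator applied to $\e^{\theta(q-i)}$ equals
\[
\e^{\theta(q-i)}\bigl[\Phi(\theta)+\theta(i+q)\cdot\text{(sign terms)}-\theta q+\theta i\bigr].
\]
Since $d(q-i)=-\dd\Stable-\beta\dd t+i\,\dd t$, the jump part contributes $\kappa_\alpha\theta^\alpha-\beta\theta=\Phi(\theta)<0$ and the drift contributes $+\theta i$. The $+\theta i$ term must be absorbed. Where $i$ is large, the factor $\e^{-\theta i}$ makes the exponential term negligible compared with $(1+i)^p$, whose generator is $\lesssim -p i(1+i)^{p-1}+C(1+i)^{p-\alpha}$; this works because $p<\alpha$, so the jump integral $\int((1+i+z)^p-(1+i)^p-pz(1+i)^{p-1})\nu_{\mathrm A}(\dd z)$ is finite and of order $(1+i)^{p-\alpha}$. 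For the polynomial term at the boundary, the regulator contributes $p(1+i)^{p-1}\dd U$ at $i=0$. This is positive, so it must be compensated, for instance by subtracting a small multiple of $q$ or using $(1+i+\delta q)$-type terms. Checking that all cross terms close, giving $\gen V\le -cV+b$ outside a compact set, is the step I expect to be the main obstacle. I would first try weighting the product $(1+i)^p\e^{\theta(q-\lambda i)}$ with $\lambda$ tuned so that the oblique derivative is $\le0$ and the $i$-drift term is dominated. Rigor via Itô's formula for the jump process with regulator uses localization and \Cref{prop:well-posedness}.

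\textbf{Minorization.} On a compact $C$, use the mechanism described in the introduction. With positive probability the input has a large negative drift segment (the compensated small-jump part), which drives $I$ to $0$ and produces regulator pushes; afterwards two selected large jumps occur, and the process then evolves reflection-free. The reflection-free map from the two jump sizes and times to the terminal $(I_{t_0},Q_{t_0})$ has a nondegenerate Jacobian, because $Q$ gains the regulator amount accumulated before the jumps. This yields a lower bound by a measure with a positive density on a small box, uniformly over $y\in C$. Uniformity in $y$ follows from \Cref{prop:initial-nonexpansiveness}, since trajectories from different starting points in $C$ stay within $\|y-\tilde y\|_1$ of each other and their differences contract once boundary contact occurs. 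Together with the drift condition, this completes the hypotheses of the Harris theorem.
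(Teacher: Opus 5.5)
Your overall architecture matches the paper's. You invoke the continuous-time Harris theorem \cite{DownMeynTweedie1995}, need a geometric drift condition for some $V\asymp W_{p,\theta}$ and smallness of compact sets via a fixed-time minorization, and transfer from the $V$-norm to $\|\cdot\|_{p,\theta}$. The genuine gap is the step you flag yourself: neither Lyapunov candidate works.

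For $\e^{\theta(q-i)}$ the two $q$-dependent drift contributions cancel exactly, giving $\gen\e^{\theta(q-i)}=(\Phi(\theta)+\theta i)\e^{\theta(q-i)}$. So this term gets no damping from a large queue.
\begin{itemize}
\item Take $V=(1+i)^p+\e^{\theta(q-i)}\phi(i)+K$ with $\phi\equiv1$. At any fixed $i>-\Phi(\theta)/\theta$ and $q\to\infty$ you get $\gen V\ge c\,\e^{\theta(q-i)}-C(1+q)$ with $c=\Phi(\theta)+\theta i>0$. This is positive drift of the order of $V$, while $(1+i)^p$ contributes only the linear damping $-pq(1+i)^{p-1}$.
\item A cutoff $\phi$ decreasing in $i$ does not help. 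It adds $(\beta-i-q)\phi'(i)\e^{\theta(q-i)}$, which is positive and of order $q\,\e^{\theta(q-i)}$ wherever $\phi'<0$.
\item Independently, $V\lesssim(1+i)^p+\e^{\theta q}$ cannot dominate $cW_{p,\theta}$ when both coordinates are large. Even a valid drift inequality would therefore only give convergence in a weaker norm.
\item For $(1+i)^p\e^{\theta(q-\lambda i)}$, the oblique derivative at $i=0$ is $(p+\theta-\lambda\theta)\e^{\theta q}$, which forces $\lambda\ge1+p/\theta$. Then $V\to0$ as $i\to\infty$, so $V$ is neither norm-like nor comparable to $W_{p,\theta}$.
\end{itemize}

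The missing idea is to keep the product form $V=\e^{\theta q}g(i)$, for which the oblique condition is just $g'(0)=-\theta g(0)$. Polynomial growth is then added through a correction that vanishes to second order at the face: take $g=\e^{-\theta i}+\varepsilon\psi_p$ with $\psi_p(i)=i^2(1+i)^{p-2}$ and $\varepsilon\jumpgenI\psi_p(0)=-\Phi(\theta)/2$. Then
\[
\frac{\gen V}{V}=\frac{(\beta-i)g'(i)+\jumpgenI g(i)}{g(i)}-q\,\frac{\varepsilon[\psi_p'(i)+\theta\psi_p(i)]}{g(i)}.
\]
\begin{itemize}
\item The first term equals $\Phi(\theta)/2<0$ at $i=0$. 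It tends to $-p$ as $i\to\infty$ because $\jumpgenI\psi_p(i)=O(i^{p-\alpha})$, which absorbs your $+\theta i$ term.
\item The second term is nonpositive. On compact $i$-intervals away from zero it is strictly negative and linear in $q$, which controls exactly the region where your candidate fails.
\end{itemize}
You would still need the unstopped Dynkin identity. The paper obtains it by uniform integrability from a larger weight $V_{\bar p,\bar\theta}$ (\Cref{lem:lyapunov-dynkin}).

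The minorization sketch also has a flaw in its uniformity argument. Trajectories driven by a common input do not contract after boundary contact. In the computation behind \Cref{prop:initial-nonexpansiveness}, on $\{I^y=I^{\widetilde y}\}$ one has $\dd D=z\,\dd t$, hence $\dd z=0$. This holds in particular while both paths sit on the face under the strong negative drift, so an initial queue discrepancy is preserved exactly. The $\ell^1$ distance decreases only at rate $|I^y-I^{\widetilde y}|$.

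The paper therefore first moves all of $K_0$ into a small ball around $(\beta,0)$. It does this with the zero-input attraction (\Cref{lem:deterministic-attraction}), the small-ball bound and the Lipschitz estimate, and only then runs the jump construction from that ball.

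Your Jacobian argument also needs more care. The terminal map depends on the rough small-jump path, so it is not differentiable in the usual sense. Moreover, if the motion is reflection-free after the first jump, the terminal queue does not depend on either jump size. Any nondegeneracy in the queue direction must then come from the first jump time. That route may be workable, but it needs monotone estimates that are robust to the perturbation.

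The paper instead lets the first jump's excursion return to the face, so its size controls the terminal queue through a strictly decreasing bi-Lipschitz map (\Cref{lem:excursion-queue-sensitivity}). The second jump shifts terminal idleness affinely. This triangular structure gives a Lebesgue lower bound uniformly in the small-jump path.
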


\subsection{Construction of the invariant law}

The invariant law can be constructed from a specified stable input by integration and running-supremum operations.
Let $\widehat\Stable$ have the same law as $\Stable$ and set $A(x):=\beta x+\widehat\Stable(x^\alpha/\alpha)$ for all $x\ge0$.
Starting from $R_0\equiv0$, define, for $n\ge0$,
\begin{equation}\label{eq:stationary-recursion}
J_n(x):=\int_0^x\frac{R_n(v)}v\dd v,
\qquad
R_{n+1}(x):=\sup_{0\le u\le x}[J_n(u)-A(u)]^+.
\end{equation}
At each step, $J_n$ supplies the accumulated queue contribution, and $R_{n+1}$ is the least nondecreasing correction that keeps $A-J_n+R_{n+1}$ nonnegative.
The integrals at zero are improper; their finiteness is part of the result.

\begin{theorem}[Invariant law from stable input]\label[theorem]{thm:stationary-representation}
Almost surely, $R_n,J_n$ are finite and continuous, are nondecreasing in $n$, and converge uniformly on compact intervals to finite limits $R_\infty,J_\infty$.
Moreover,
\begin{equation}\label{eq:stationary-law-representation}
\pi=\law\bigl(A(1)+R_\infty(1)-J_\infty(1),\ R_\infty(1)\bigr).
\end{equation}
Only the stable path on $[0,1/\alpha]$ is needed to construct this random vector.
\end{theorem}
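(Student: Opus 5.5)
The plan is to realize a stationary solution of \cref{eq:reflected-sde,eq:regulator-condition} on $(-\infty,0]$. I then transform it by the Lamperti-type substitution $x=\e^{s}$ and identify the recursion \cref{eq:stationary-recursion} as a monotone fixed-point iteration for the transformed regulator.

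\textbf{Step 1: transformation.} Let $(Y_s)_{s\le0}$ be a stationary solution with marginal $\pi$, driven by a two-sided input. Such a solution exists by \Cref{prop:positive-harris-recurrence} together with Kolmogorov extension and \Cref{prop:well-posedness}. Define for $x\in(0,1]$
\[
R^*(x):=xQ_{\log x}=\int_{(-\infty,\log x]}\e^{r}\dd U_r,
\]
using \cref{eq:queue-variation-of-constants} and letting the initial time tend to $-\infty$. Then $\dd R^*=\e^{s}\dd U_s$, and
\[
J^*(x):=\int_0^x R^*(v)v^{-1}\dd v=\int_{-\infty}^{\log x}\e^{r}Q_r\dd r ,
\]
which is finite because $m_Q<\infty$.

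Applying Itô's product rule to $\e^{s}I_s$ in \cref{eq:reflected-sde}, the $I\,\dd s$ terms cancel. This gives
\[
xI_{\log x}=\int_{(-\infty,\log x]}\e^{r}\,\dd\Stable(r)+\beta x-J^*(x)+R^*(x).
\]
The stochastic integral is a process with independent, strictly $\alpha$-stable increments and deterministic clock $\int_{-\infty}^{\log x}\e^{\alpha r}\dd r=x^\alpha/\alpha$. Hence, as a process in $x$, it equals $\widehat\Stable(x^\alpha/\alpha)$ in law, and the input term becomes $A(x)$. Only $\widehat\Stable$ on $[0,1/\alpha]$ enters for $x\le1$.

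The constraint $xI\ge0$ with $R^*$ increasing only where $I=0$ is a one-dimensional Skorokhod problem with $R^*(0)=0$. Therefore
\[
R^*=\sup_{0\le u\le\cdot}[J^*(u)-A(u)]^+ ,
\]
that is, $R^*=\mathcal T(R^*)$ for the map $\mathcal T(R):=\sup_{u\le\cdot}[J_R(u)-A(u)]^+$. Consequently $\bigl(A(1)+R^*(1)-J^*(1),R^*(1)\bigr)=(I_0,Q_0)\sim\pi$.

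\textbf{Step 2: monotone iteration.} The map $\mathcal T$ is monotone, since $R\le R'$ implies $J_R\le J_{R'}$. Starting from $R_0=0\le R^*$, induction gives
\[
0\le R_n\le R_{n+1}\le R^*\quad\text{and}\quad J_n\le J^*<\infty \quad\text{for all } n.
\]
In particular the improper integrals converge. Monotone convergence yields limits $R_\infty\le R^*$ and $J_\infty$, with $R_\infty=\mathcal T(R_\infty)$. Continuity of each $R_n,J_n$ comes from continuity of $J_n$ and of $A$ off its upward jumps; the running supremum of $J-A$ does not jump upward, since $A$ has no negative jumps. Uniform convergence on compacts then follows from Dini's theorem once $R_\infty$ is shown to be continuous. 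That continuity holds because $R_\infty$ solves a Skorokhod problem with continuous $J_\infty$ and an input $A$ without negative jumps.

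\textbf{Step 3: identification of the law (main obstacle).} Pathwise uniqueness of the fixed point is delicate: the kernel $1/v$ defeats a direct Gronwall argument, and \Cref{prop:initial-nonexpansiveness} gives no contraction. I would instead argue by sandwiching in law.

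For $T>0$, let $Y^{(T)}$ be the solution started from $(0,0)$ at time $-T$, driven by the same input. Its transform $R^{(T)}$ is the minimal fixed point of $\mathcal T$ with the integral in $J$ restricted to $[\e^{-T},x]$. The same monotone-induction argument gives $R^{(T)}\le R_\infty$. By \Cref{thm:exponential-ergodicity}, $\law(Y^{(T)}_0)\to\pi$, so $R^{(T)}(1)\Longrightarrow\pi_Q$.

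This yields the pathwise ordering $R^{(T)}(1)\le R_\infty(1)\le R^*(1)$, with both outer variables having law tending to, or equal to, $\pi_Q$. Hence $R_\infty(1)$ has law $\pi_Q$, and $R_\infty(1)=R^*(1)$ almost surely. By stationarity and scaling, the same argument at each fixed $x$ gives $R_\infty(x)=R^*(x)$ almost surely. Taking a countable dense set of $x$ and using continuity, $R_\infty=R^*$ as paths, so $J_\infty=J^*$. Then \cref{eq:stationary-law-representation} follows from Step 1.
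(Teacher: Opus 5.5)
\emph{Verdict: the proposal has a genuine gap in Step 3, but it can be repaired.}

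Your Steps 1 and 2 reproduce the paper's stationary upper bound and monotone iteration. Step 3 is the crux, and there you claim its two key properties, $R^{(T)}\le R_\infty$ and $R^{(T)}(1)\Longrightarrow\pi_Q$, for two different objects.

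For the solution started from $(0,0)$ at time $-T$, the product-rule computation of Step 1 with $I_{-T}=Q_{-T}=0$ gives, for $x\ge\e^{-T}$,
\[
R^{(T)}(x)=\sup_{\e^{-T}\le u\le x}\bigl[J^{(T)}(u)-A(u)+A(\e^{-T})\bigr]^+,
\qquad
J^{(T)}(u)=\int_{\e^{-T}}^{u}\frac{R^{(T)}(v)}{v}\dd v .
\]
Because idleness restarts at zero, the input is recentred at $A(\e^{-T})$. So this is not a fixed point of $\mathcal T$ with a truncated integral, and it is not dominated by $R_\infty$. Since $J^{(T)}\ge0$, we have $R^{(T)}(x)\ge[A(\e^{-T})-A(x)]^+$. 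Suppose $A$ jumps up by a large amount just before $\e^{-T}$ and then dips. Then $J_\infty-A$ stays negative after the jump, so $R_\infty$ does not increase, while $R^{(T)}$ grows by the dip. This happens with positive probability. In original time this is the mechanism of \Cref{lem:excursion-queue-sensitivity}: starting with less idleness produces more queue.

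Conversely, the fixed point of your truncated map equals $\rho_T:=\sup_{u\le\e^{-T}}[-A(u)]^+$ at $x=\e^{-T}$, which is generally positive. So it does not describe a start from $(0,0)$, and the squeeze fails as written.

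The squeeze can be repaired by matching the starting state to the truncated map. Run the SDE on $[-T,0]$ from the random state
\[
y_T:=\bigl(\e^{T}(A(\e^{-T})+\rho_T),\ \e^{T}\rho_T\bigr)\in E.
\]
\begin{itemize}
\item This state depends only on the input on $(-\infty,-T]$, so it is independent of later increments.
\item By $c^{-1}A(c\,\cdot)\overset d=A$, its law $\mu_0$ does not depend on $T$. Hence $\law(Y^{(T)}_0)=\mu_0P_T\to\pi$ by \Cref{thm:exponential-ergodicity} and dominated convergence.
\item Its transform solves $R(x)=\sup_{0\le u\le x}\bigl[\int_{\e^{-T}}^{u\vee\e^{-T}}R(v)v^{-1}\dd v-A(u)\bigr]^+$. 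The solution is unique because $1/v$ is bounded on $[\e^{-T},x]$, so Gronwall applies.
\item It is therefore the limit of the iterates of this map from zero. Those iterates are dominated by the $R_n$, which gives $R^{(T)}\le R_\infty$.
\end{itemize}
With this choice your sandwich becomes a valid alternative route.

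The paper avoids finite-past approximations. The recursion commutes with $A\mapsto c^{-1}A(c\,\cdot)$, so $Q^*_t=\e^{-t}R_\infty(\e^t)$ and $I^*_t=\e^{-t}\{A(\e^t)+R_\infty(\e^t)-J_\infty(\e^t)\}$ is itself a time-stationary solution whose initial state is independent of future increments. Uniqueness of $\pi$ then gives $\E R_\infty(x)=xm_Q=\E R^*(x)$, and $R_\infty\le R^*$ forces equality. Your repaired route replaces that Lamperti-stationarity verification with the convergence theorem, but it still needs the scaling invariance to fix the law of $y_T$.
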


In the stationary construction, the time change $x=\e^t$ identifies $R_\infty(x)$ with $xQ_{\log x}$.
This change of time encodes the stationary past in the finite input interval; \Cref{sec:stationary-representation} develops the reconstruction.
For fixed input parameters, coupling the recursion across values of $\beta$ with one stable path shows that the stationary queue is stochastically nonincreasing in $\beta$ and has a convex mean in $\beta$ (\Cref{prop:queue-parameter-dependence}).

\subsection{Stationary tails, moments, and boundary limits}

The following tail equivalents describe the contrasting stationary extremes of idleness and the queue.
\begin{theorem}[Sharp stationary tail asymptotics]\label[theorem]{thm:marginal-tails}
For some $C_\pi\in(0,\infty)$, under $\pi$, as $x\to\infty$,
\begin{subequations}
\begin{align}
\Pp_\pi(I>x)&\sim\Pp_\pi(Q-I<-x)
\sim\frac{c_{\mathrm A}}{\alpha^2}x^{-\alpha},
\label{eq:idleness-tail}\\
\Pp_\pi(Q>x)&\sim\Pp_\pi(Q-I>x)
\sim C_\pi x^{\alpha/2}\e^{-\theta_cx}.
\label{eq:queue-sharp-tail}
\end{align}
\end{subequations}
\end{theorem}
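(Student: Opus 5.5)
Both tail statements rest on the stationary convolution structure coming from the regulator cancellation \cref{eq:regulator-cancellation}. Set $D=I-Q$. Then
\[
\dd D_t=\dd\Stable(t)+(\beta-I_t)\dd t=\dd\Stable(t)+(\beta-D_t-Q_t)\dd t .
\]
Variation of constants gives the stationary representation
\[
D_0=\int_{-\infty}^0\e^{s}\dd\Stable(s)+\beta-\int_{-\infty}^0\e^{s}Q_s\dd s .
\]
The first term $Z$ is a stable OU stationary variable. The correction $K=\int\e^{s}Q_s\dd s$ is nonnegative and, by \Cref{thm:exponential-ergodicity} and Jensen, has exponential moments up to order $\theta_c$.

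\emph{Idleness tail \cref{eq:idleness-tail}.} The Lévy measure of $Z$ is $\int_0^\infty \nu_{\mathrm A}(\e^{s}\cdot)\dd s$. Its right tail is $c_{\mathrm A}x^{-\alpha}/\alpha\cdot\int_0^\infty \e^{-\alpha s}\dd s=c_{\mathrm A}x^{-\alpha}/\alpha^2$. Hence $\Pp(Z>x)\sim c_{\mathrm A}x^{-\alpha}/\alpha^2$, by the standard result that a subexponential infinitely divisible law inherits its Lévy tail. The left tail of $Z$ is lighter than any exponential, since there are no negative jumps.

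Write $D=Z+\beta-K$ with $K\ge0$ exponentially integrable. A Breiman/convolution argument then gives $\Pp(Q-I<-x)=\Pp(D>x)\sim\Pp(Z>x)$. The one subtlety is that $Z$ and $K$ are dependent. I would handle this by splitting on $\{K\le\epsilon x\}$ and using $\Pp(K>\epsilon x)=o(x^{-\alpha})$, with an upper bound from $D\le Z+\beta$.

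For $I$ itself, use $I=D+Q$. The lower bound is $I\ge D$. For the upper bound, note $\Pp(Q>\epsilon x)$ is exponentially small, so $\Pp(I>x)\le\Pp(D>(1-\epsilon)x)+o(x^{-\alpha})$.

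\emph{Queue tail \cref{eq:queue-sharp-tail}.} First, $Q-I=-D=K-Z-\beta$. The left tail of $Z$ is superexponentially thin, and $I$ is heavy only upward. So once the sharp asymptotic for $Q$ holds, $\Pp(Q-I>x)\sim\Pp(Q>x)$ follows by similar splitting, using that on $\{Q>x\}$ the idleness is typically $O(1)$ (compare \Cref{cor:large-queue-idleness}).

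The main task is the sharp asymptotic for $Q$. My plan is a regenerative (Cramér-type) analysis on large-queue excursions. While $Q$ is large, the regulator is active essentially exactly when $I$ hits zero. The pair then behaves like a one-dimensional reflected process: $Q$ grows by the local-time–like regulator, which compensates $-(\Stable+\beta t)$ minus terms of order $Q$. The exponential change of measure $\e^{\theta_c(\cdot)}$ associated with the root $\Phi(\theta_c)=0$ makes $\e^{\theta_c Q_t}$ approximately a martingale in the excursion regime. The linear decay $-Q\dd t$ and the $-Q$ drift in the idleness equation then produce the polynomial correction. Heuristically, the relevant growth rate near level $x$ is $\Phi(\theta)+\theta x$-type compensation. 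Equivalently, a generator computation on $f(i,q)=q^{\gamma}\e^{\theta_c q}g(i)$ with the oblique condition $\partial_i f+\partial_q f=0$ at $i=0$ should force $\gamma=\alpha/2$. This is the exponent I would aim to derive from a Lyapunov/harmonic-function construction of such an $f$.

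With such an $f$, the constant $C_\pi$ comes from the regeneration formula for $\pi$ at a recurrent small set given by the minorization behind \Cref{thm:exponential-ergodicity}. One writes $\pi(Q>x)$ as the expected time above $x$ per cycle. Under the twisted measure, a renewal/key-renewal argument turns the expected time above $x$ into $C_\pi x^{\alpha/2}\e^{-\theta_c x}$. Positivity of $C_\pi$ comes from a lower bound by explicit paths, and finiteness from the Lyapunov bound.

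I expect the main obstacle to be identifying the polynomial power $\alpha/2$ rigorously. This requires an accurate asymptotic harmonic function near the face $I=0$ at large $q$, where the stable jumps and the oblique reflection interact, plus uniform error control in the change of measure. I would first try an approximate eigenfunction expansion in $q$, then sandwich $\Pp_\pi(Q>x)$ between sub- and supermartingale bounds with exponents $\alpha/2\pm\epsilon$. The exact constant would be extracted afterwards by the regenerative representation.
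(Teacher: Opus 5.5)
Your idleness-tail argument is correct and is essentially the paper's. The paper also writes $I_0-Q_0=Z+\beta-J$ and $I_0=Z+\beta+Q_0-J$ with $Z\overset d=\alpha^{-1/\alpha}\Stable(1)$. It controls the remainders by Jensen and the subcritical exponential moments of $Q$, and transfers the tail by $\varepsilon$-splitting, which needs no independence.

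The gap is the sharp queue tail. Your plan ends with sub- and supermartingale bounds using exponents $\alpha/2\pm\epsilon$. Such bounds only give $x^{\alpha/2\pm\epsilon}\e^{-\theta_c x}$. They cannot show that $x^{-\alpha/2}\e^{\theta_c x}\Pp_\pi(Q>x)$ converges, and the regenerative formula does not produce the constant by itself.

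A key-renewal argument under the exponential twist does not supply it either. Near level $x$ the drifts $-Q$ are of order $x$, so the dynamics are not level-homogeneous, and the $O(1/q)$ level dependence is exactly what creates the polynomial factor. The Lyapunov bound controls only $\theta<\theta_c$, so it gives no finiteness of $C_\pi$.

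Your ansatz also fails as stated. For $f=q^\gamma\e^{\theta_c q}g(i)$, the oblique derivative on the face is $q^\gamma\e^{\theta_c q}[g'(0)+\theta_c g(0)+\gamma g(0)/q]$. This vanishes for all $q$ only if $\gamma=0$ or $g(0)=0$. With $g=\e^{-\theta_c i}$, the generator leaves a relative error $\theta_c i-\gamma$ of order one. The sign is also reversed: a harmonic function behaving like $q^{\alpha/2}\e^{\theta_c q}$ on the face would give hitting probabilities, hence a tail, of order $x^{-\alpha/2}\e^{-\theta_c x}$.

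The missing idea is a barrier that is exact to second order in $1/q$, with idleness-dependent coefficients. The paper takes $b_\zeta=\e^{\theta_c(q-i)}q^{-\gamma}[1+c_1(i;\zeta)/q+c_2(i;\zeta)/q^2]$. The polynomials $c_1,c_2$ are chosen so that, under the dominant drift $-q\partial_i$, the terms of orders $q^{-\gamma}$ and $q^{-\gamma-1}$ cancel.

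The oblique condition then forces $\gamma=\theta_c\Phi''(\theta_c)/(2\Phi'(\theta_c))=\alpha/2$. The constant $\zeta_\pm$ fixes the sign of the remaining $q^{-\gamma-2}$ term at the face. A correction $\pm C_{\mathrm b}w_c(q)q^{-3}\psi_0(i)$ extends that sign to all $i\ge0$.

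Optional stopping then gives $\mathsf p_\ell(b,x)w_c(x)/w_c(b)=1+O(b^{-1})$ uniformly in $x\ge b$, where $w_c(q)=\e^{\theta_c q}q^{-\alpha/2}$. Because $Q$ is continuous and the state at a first upcrossing of $b$ is exactly $(0,b)$, the factorization $\mathsf p_\ell(a,x)=\mathsf p_\ell(a,b)\mathsf p_\ell(b,x)$ is exact. Hence $w_c(x)\mathsf p_\ell(a,x)$ is Cauchy and converges to some $\mathsf k_\ell(a)\in(0,\infty)$.

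The same upcrossing structure splits the regenerative formula into this hitting probability times an occupation factor, which your proposal does not address. Started from $(0,x)$, the expected time above $x$ before $Q$ falls to $\ell$ converges to $\int_0^\infty\Pp(X_t>0)\dd t=1/(\theta_c\Phi'(\theta_c))$, where $X_t=-\Stable(t)-\beta t$. The paper justifies this limit by domination from the killed Lyapunov bound.

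Finally, for $Q-I$ an $O(1)$ bound on idleness given $Q>x$ is not enough. With an exponential tail, order-one idleness would change the constant. You need $I\to0$ in conditional probability. This follows from the idleness-weighting identity together with the sharp $Q$ tail, which give $\E_\pi[I\mid Q>x]=O(1/x)$. Alternatively, rerun the regenerative representation with the event $\{Q_t-I_t>x\}$; its occupation factor has the same limit.
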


A positive input jump increases idleness immediately, so a single large jump can produce extreme idleness.
The queue grows only through reflection at $I=0$, so its large values arise from sustained negative input fluctuations that repeatedly activate the regulator.
For the queue tail, \Cref{sec:queue-tail-analysis} uses upcrossings at $I=0$ to construct regenerative cycles.
The stationary tail factors into a hitting probability and an expected occupation time, divided by the mean cycle length.
The occupation factor converges to a positive constant, so the main task is to estimate the hitting probability sharply.
We obtain upper and lower bounds by correcting $\e^{\theta_c(q-i)}$ to satisfy the required generator inequalities and the oblique derivative condition.
Compatibility of the generator cancellations with this boundary condition determines the polynomial factor $x^{\alpha/2}$, while regeneration gives $C_\pi$ in \cref{eq:queue-tail-constant}.

Setting $\theta=0$ in $\E_\pi W_{p,\theta}(I,Q)$ gives polynomial idleness moments; setting $p=0$ gives queue exponential moments.
Together with the subcritical joint moment bounds, these tail equivalents identify the exact polynomial--exponential moment domain.
\begin{theorem}[Exact polynomial--exponential moment domain]\label[theorem]{thm:invariant-moments}
For all $p,\theta\ge0$,
\begin{equation}\label{eq:exact-invariant-moment-domain}
\E_\pi[(1+I)^p\e^{\theta Q}]<\infty
\quad\Longleftrightarrow\quad
p<\alpha\ \text{ and }\ \theta<\theta_c.
\end{equation}
\end{theorem}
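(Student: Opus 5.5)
The proof splits into sufficiency and necessity, and both follow from results already stated.

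\emph{Sufficiency.} Take $p<\alpha$ and $\theta<\theta_c$. I would use the Lyapunov construction behind \Cref{thm:exponential-ergodicity} rather than the norm bound alone. That construction gives a function $V\asymp W_{p,\theta}$ and constants $c>0$, $b<\infty$ with a drift inequality $\gen V\le -cV+b$ in the extended-generator sense. The oblique derivative condition $\partial_iV+\partial_qV\le0$ on $\{i=0\}$ holds, so the regulator term has the right sign. Standard Harris theory \cite{MeynTweedie1993Resolvents,DownMeynTweedie1995} then yields $\pi(V)\le b/c<\infty$.

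If only the statement of \Cref{thm:exponential-ergodicity} is used, I would argue as follows. Fix $y$ and, for $N$, take $h_N=\min(W_{p,\theta},N)$. The theorem gives
\[
\pi(h_N)\le P_th_N(y)+M_{p,\theta}W_{p,\theta}(y)\e^{-\rho t}\le P_tW_{p,\theta}(y)+M W_{p,\theta}(y).
\]
Letting $t\to\infty$ requires $\sup_tP_tW_{p,\theta}(y)<\infty$. This is again the drift bound, so I would cite the Lyapunov lemma of \Cref{sec:lyapunov} directly and let $N\uparrow\infty$ by monotone convergence.

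\emph{Necessity.} Because $(1+I)^p\e^{\theta Q}\ge\max\{(1+I)^p,\e^{\theta Q}\}$, finiteness of the joint moment forces both $\E_\pi(1+I)^p<\infty$ and $\E_\pi\e^{\theta Q}<\infty$. By \eqref{eq:idleness-tail}, $\Pp_\pi(I>x)\sim(\cA/\alpha^2)x^{-\alpha}$. Writing $\E_\pi(1+I)^p=1+\int_0^\infty p(1+x)^{p-1}\Pp_\pi(I>x)\dd x$ and comparing the integrand with $x^{p-1-\alpha}$ shows divergence whenever $p\ge\alpha$. Similarly, \eqref{eq:queue-sharp-tail} gives $\Pp_\pi(Q>x)\sim C_\pi x^{\alpha/2}\e^{-\theta_cx}$ with $C_\pi>0$. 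Then
\[
\E_\pi\e^{\theta Q}=1+\int_0^\infty\theta\e^{\theta x}\Pp_\pi(Q>x)\dd x,
\]
whose integrand is comparable to $x^{\alpha/2}\e^{(\theta-\theta_c)x}$. This diverges for $\theta\ge\theta_c$, including the boundary case $\theta=\theta_c$, because the polynomial prefactor $x^{\alpha/2}$ grows.

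The main delicacy is in sufficiency: I need a drift inequality valid uniformly up to the boundary and for the full range $\theta<\theta_c$. Necessity, including the critical value $\theta=\theta_c$, rests entirely on the sharp tail in \Cref{thm:marginal-tails}. If a direct drift bound at the given $(p,\theta)$ were unavailable, I would first apply Hölder's inequality: choose $r>1$ with $rp<\alpha$, take the conjugate exponent $r'$, and bound
\[
\E_\pi[(1+I)^p\e^{\theta Q}]\le\bigl(\E_\pi(1+I)^{rp}\bigr)^{1/r}\bigl(\E_\pi\e^{r'\theta Q}\bigr)^{1/r'}.
\]
This fails when $r'\theta\ge\theta_c$, which is exactly why the joint Lyapunov bound is needed near the critical line.
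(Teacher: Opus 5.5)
Your proposal is correct and follows the paper's proof. Sufficiency is the Harris-theory integrability $\pi(V_{p,\theta})<\infty$ for the Foster--Lyapunov function, which is the paper's \Cref{lem:stationary-moment-bounds}. Necessity comes from each marginal factor being dominated by the joint weight, combined with the idleness and queue tail equivalents in \Cref{thm:marginal-tails}.

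The one step you leave implicit is the boundary cases $p=0$ or $\theta=0$. The construction of $V_{p,\theta}$ needs $p>0$ and $\theta>0$, so it does not cover them. The paper disposes of these cases with the pointwise bound $(1+I)^p\e^{\theta Q}\le(1+I)^{\bar p}\e^{\bar\theta Q}$, where $p<\bar p<\alpha$ and $\theta<\bar\theta<\theta_c$.
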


Near the reflecting face, the boundary asymptotic describes small idleness and a size-biased limit for the conditional queue distribution.
Define the size-biased queue variable $Q^{\mathrm{sb}}$ by
\[
\Pp(Q^{\mathrm{sb}}\in\dd q)=\frac{q\,\pi_Q(\dd q)}{m_Q}.
\]
This is a probability law because $m_Q=\E_\pi Q\in(0,\infty)$.
Let $B\sim\operatorname{Beta}(\alpha-1,1)$ be independent of $Q^{\mathrm{sb}}$.

\begin{theorem}[Small-idleness asymptotics and conditional limit]\label[theorem]{thm:boundary-asymptotics}
The small-idleness asymptotic is
\begin{equation}\label{eq:small-idleness-asymptotic}
\lim_{\varepsilon\downarrow0}\varepsilon^{1-\alpha}\Pp_\pi(I\le\varepsilon)
=\frac{m_Q}{\kappa_\alpha\Gamma(\alpha)}.
\end{equation}
Moreover,
\begin{equation}\label{eq:boundary-product-limit}
\law_\pi\left(\frac I\varepsilon,Q\ \middle|\ I\le\varepsilon\right)
\Longrightarrow\law(B,Q^{\mathrm{sb}}),\qquad \varepsilon\downarrow0.
\end{equation}
\end{theorem}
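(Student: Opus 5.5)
The plan is to derive both statements from a single Laplace-transform identity in the idleness variable, conditioned on the queue. For $\lambda\ge0$ and a bounded continuous $g\ge0$ on $\R_+$, apply It\^o's formula to $f(i,q)=\e^{-\lambda i}g(q)$. Take $g$ smooth at first; the class is extended by density afterwards. The generator of the stable part acting on $\e^{-\lambda i}$ gives $\kappa_\alpha\lambda^\alpha\e^{-\lambda i}$, by \cref{eq:stable-laplace-exponent}. The drift gives $-\lambda(\beta-i-q)\e^{-\lambda i}g(q)-q\e^{-\lambda i}g'(q)$. The regulator term is $(-\lambda+\partial_q)f$ evaluated at $i=0$, that is, $(g'(q)-\lambda g(q))\,\dd U$.

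Under $\pi$ the expectation of the generator term vanishes. We replace the boundary term using the regulator occupation identity of \Cref{prop:regulator-occupation}, which says $\E_\pi\int_0^1 h(Q_s)\dd U_s=\E_\pi[Qh(Q)]$. This yields the stationary equation
\[
\kappa_\alpha\lambda^\alpha\E_\pi[\e^{-\lambda I}g(Q)]
-\lambda\E_\pi[(\beta-I-Q)\e^{-\lambda I}g(Q)]
-\E_\pi[Q\e^{-\lambda I}g'(Q)]
+\E_\pi[Q(g'(Q)-\lambda g(Q))]=0.
\]
This should be essentially the joint-transform equation behind \Cref{thm:joint-transform-reconstruction}, and I would quote it from there. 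Integrability is not an issue, since $\E_\pi I<\infty$ and $\E_\pi Q<\infty$ by \Cref{thm:invariant-moments}.

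Next we pass to $\lambda\to\infty$. Write $\phi_g(\lambda)=\E_\pi[\e^{-\lambda I}g(Q)]$, so that $\E_\pi[I\e^{-\lambda I}g(Q)]=-\phi_g'(\lambda)$. As $\lambda\to\infty$, the terms $\E_\pi[Q\e^{-\lambda I}g'(Q)]$ and $\E_\pi[Q\e^{-\lambda I}g(Q)]$ tend to their restrictions to $\{I=0\}$. Those restrictions vanish because $\pi$ has no atom on the face, by the absolute continuity in \Cref{prop:joint-stationary-density}. The term $\E_\pi[I\e^{-\lambda I}g(Q)]$ is $o(1)$ by dominated convergence.

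Dividing by $\lambda$, the dominant balance is
\[
\kappa_\alpha\lambda^{\alpha-1}\phi_g(\lambda)=\E_\pi[Qg(Q)]+o(1),
\]
because every other term is either $o(1)$ or $\lambda^{-1}\cdot O(1)$. This gives $\phi_g(\lambda)\sim\E_\pi[Qg(Q)]/(\kappa_\alpha\lambda^{\alpha-1})$. The $g'$ terms drop out after division by $\lambda$, so only bounded $g$ matters.

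Karamata's Tauberian theorem then applies to the measure $\mu_g(\dd x)=\E_\pi[g(Q);I\in\dd x]$, which is nonnegative with a monotone distribution function. It gives
\[
\mu_g([0,\varepsilon])\sim\frac{\E_\pi[Qg(Q)]}{\kappa_\alpha\Gamma(\alpha)}\,\varepsilon^{\alpha-1}.
\]
Taking $g\equiv1$ gives \cref{eq:small-idleness-asymptotic}. Taking ratios gives
\[
\Pp_\pi(I\le s\varepsilon,\ Q\in\cdot\mid I\le\varepsilon)\to s^{\alpha-1}\,\Pp(Q^{\mathrm{sb}}\in\cdot),\qquad s\in[0,1].
\]
Since $s^{\alpha-1}$ is the $\operatorname{Beta}(\alpha-1,1)$ distribution function, this product form is exactly \cref{eq:boundary-product-limit}. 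Convergence for smooth $g$ together with the intervals $[0,s]$ determines weak convergence, because the limits are tight: $\E_\pi[Q\,\cdot\,]$ is a finite measure.

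The main obstacle is justifying the It\^o/stationarity identity rigorously. Two points need care. First, the stable generator must act on $\e^{-\lambda i}$ while $I$ stays nonnegative and has heavy-tailed jumps; the compensated jump integral is a true martingale because $\e^{-\lambda i}$ is bounded and Lipschitz on $\R_+$. Second, the regulator expectation must be exchanged with the occupation identity. I would localize with stopping times, as in \Cref{app:stationary-localization}, and use $\E_\pi U_1=m_Q<\infty$. A secondary point is the no-atom claim: I would rely on absolute continuity of $\pi$ rather than proving it separately.
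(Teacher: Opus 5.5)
Your proposal is correct and follows the paper's proof essentially step for step. Both use the product-test identity \cref{eq:stationary-product-test} with $\e^{-si}h(q)$, divide by $s$, and let $s\to\infty$ using $\Pp_\pi(I=0)=0$ to obtain $s^{\alpha-1}\E_\pi[\e^{-sI}h(Q)]\to\E_\pi[Qh(Q)]/\kappa_\alpha$; both then invert this Tauberian-style and take ratios to identify the Beta and size-biased factors. The only difference is that you cite Karamata's Tauberian theorem, whereas the paper reproves that step inline via the tilted measures $\mu_{\varepsilon,h}$ and the continuity theorem, which also handles the zero-limiting-mass case separately.
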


The two idleness asymptotics determine the range of finite moments and their divergence rates at its endpoints.
\begin{corollary}[Positive and negative moments of idleness]\label[corollary]{cor:real-idleness-moments}
For every $r\in\R$,
\begin{equation}\label{eq:real-idleness-moments}
\E_\pi I^r<\infty
\quad\Longleftrightarrow\quad
1-\alpha<r<\alpha.
\end{equation}
At the endpoints of this interval, the moments diverge according to
\begin{subequations}
\begin{align}
\lim_{a\uparrow\alpha-1}(\alpha-1-a)\E_\pi I^{-a}
&=\frac{m_Q}{\kappa_\alpha\Gamma(\alpha-1)},\label{eq:inverse-idleness-pole}\\
\lim_{r\uparrow\alpha}(\alpha-r)\E_\pi I^r
&=\frac{\cA}{\alpha}.\label{eq:positive-idleness-pole}
\end{align}
\end{subequations}
\end{corollary}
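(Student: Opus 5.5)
The corollary follows from the two idleness asymptotics: the small-idleness asymptotic \cref{eq:small-idleness-asymptotic} of \Cref{thm:boundary-asymptotics} and the idleness tail \cref{eq:idleness-tail} of \Cref{thm:marginal-tails}. I would treat positive and negative exponents separately, using the layer-cake representation.

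\emph{Positive exponents.} For $r>0$,
\[
\E_\pi I^r=\int_0^\infty r x^{r-1}\Pp_\pi(I>x)\dd x.
\]
The integral over $[0,1]$ is at most $1$. Because $\Pp_\pi(I>x)\sim(\cA/\alpha^2)x^{-\alpha}$, the integral over $[1,\infty)$ converges if and only if $r<\alpha$.

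For the pole at $r=\alpha$, fix $\delta>0$ and choose $x_0$ such that
\[
(1-\delta)\frac{\cA}{\alpha^2}x^{-\alpha}\le\Pp_\pi(I>x)\le(1+\delta)\frac{\cA}{\alpha^2}x^{-\alpha}
\qquad\text{for } x\ge x_0.
\]
Then
\[
\int_{x_0}^\infty r x^{r-1-\alpha}\dd x=\frac{r\,x_0^{r-\alpha}}{\alpha-r}.
\]
Multiplying by $(\alpha-r)$ and letting $r\uparrow\alpha$ gives a limit within $(1\pm\delta)\,\cA/\alpha^2\cdot\alpha=(1\pm\delta)\cA/\alpha$. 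The contribution of $[0,x_0]$ stays bounded, by $x_0^{r}\le\max(1,x_0^{\alpha})$, so it vanishes after multiplication by $(\alpha-r)$. Letting $\delta\downarrow0$ yields \cref{eq:positive-idleness-pole}. The case $r=0$ is trivial.

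\emph{Negative exponents.} Write $r=-a$ with $a>0$. Using $\Pp_\pi(I^{-a}>s)=\Pp_\pi(I<s^{-1/a})$ and substituting $s=\varepsilon^{-a}$,
\[
\E_\pi I^{-a}=\int_0^\infty a\varepsilon^{-a-1}\Pp_\pi(I<\varepsilon)\dd\varepsilon .
\]
First I need $\Pp_\pi(I=0)=0$. This holds because the asymptotic \cref{eq:small-idleness-asymptotic} forces $\Pp_\pi(I\le\varepsilon)\to0$. Alternatively, it follows from the absolute continuity in \Cref{prop:joint-stationary-density}. Consequently $\Pp_\pi(I<\varepsilon)$ and $\Pp_\pi(I\le\varepsilon)$ share the asymptotic $\varepsilon^{\alpha-1}m_Q/(\kappa_\alpha\Gamma(\alpha))$, since they are squeezed between $\Pp_\pi(I\le\varepsilon')$ for $\varepsilon'$ slightly smaller or larger.

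The part of the integral over $[1,\infty)$ is at most $\int_1^\infty a\varepsilon^{-a-1}\dd\varepsilon=1$. Near zero the integrand is comparable to $\varepsilon^{\alpha-2-a}$, which is integrable if and only if $a<\alpha-1$. This gives finiteness exactly for $1-\alpha<r<0$, and together with the positive case it proves \cref{eq:real-idleness-moments}.

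For the pole at $a=\alpha-1$, the same $\delta$-sandwich on $(0,\varepsilon_0]$ gives
\[
\int_0^{\varepsilon_0}a\,\varepsilon^{\alpha-2-a}\dd\varepsilon=\frac{a\,\varepsilon_0^{\alpha-1-a}}{\alpha-1-a}.
\]
Multiplying by $(\alpha-1-a)$ and letting $a\uparrow\alpha-1$ gives the limit
\[
(\alpha-1)\,\frac{m_Q}{\kappa_\alpha\Gamma(\alpha)}=\frac{m_Q}{\kappa_\alpha\Gamma(\alpha-1)},
\]
using $\Gamma(\alpha)=(\alpha-1)\Gamma(\alpha-1)$. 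The remaining parts of the integral stay bounded uniformly for $a$ near $\alpha-1$, which yields \cref{eq:inverse-idleness-pole}.

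There is no genuine obstacle in this argument; it is a routine Abelian computation. The one point needing care is the uniformity of the remainder bounds as the exponent approaches the endpoint, which the fixed cutoffs $x_0$ and $\varepsilon_0$ handle.
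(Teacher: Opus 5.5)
Your proof is correct and follows essentially the same route as the paper. Both use tail integration against the small-idleness asymptotic and the power-law idleness tail, with fixed cutoffs and a $(1\pm\delta)$ sandwich to extract the endpoint poles; the only cosmetic difference is that the paper first splits at $I\le1$ versus $I>1$ and uses truncated tail-integral formulas, whereas you use the full layer-cake identity and bound the complementary ranges directly.
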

In particular, $\E_\pi I=\beta<\infty$, whereas $\E_\pi I^2=\E_\pi I^{-1}=\infty$.
The power-law upper tail makes the second moment diverge; the inverse moment diverges because $\Pp_\pi(I\le\varepsilon)$ is of order $\varepsilon^{\alpha-1}$, even though $\Pp_\pi(I=0)=0$.
\Cref{sec:tails} gives the proofs and consequences, including idleness conditional on a large queue.
\Cref{app:moment-refinements} gives the joint moment domain for weights singular at the reflecting face.

\section{Lyapunov drift estimates}\label{sec:lyapunov}

Lyapunov methods for reflected processes give tools for recurrence and stability \cite{DupuisWilliams1994,AtarBudhirajaDupuis2001}.
We prove a drift inequality for functions comparable to $W_{p,\theta}$.
We use this inequality together with the minorization argument in \Cref{sec:harris} to prove recurrence, stationary moment bounds, and exponential convergence.

\subsection{The generator and the oblique derivative condition}\label{sec:generator}

A positive jump of size $z$ sends $(i,q)$ to $(i+z,q)$.
Let $\mathcal C_J^2(E)$ be the class of functions $f\in C^2(E)$ such that
\[
 \int_0^\infty\sup_{(i,q)\in K}
|f(i+z,q)-f(i,q)-zf_i(i,q)|z^{-1-\alpha}\dd z<\infty
\]
for every compact $K\subset E$.
For $f\in\mathcal C_J^2(E)$, define the integro-differential operator
\begin{equation} \label{eq:gen-jumpgen}
\begin{split}
\gen f(i,q)&:=(\beta-i-q)f_i(i,q)-qf_q(i,q)+\jumpgen f(i,q),\\
\jumpgen f(i,q)&:=c_{\mathrm A}\int_0^\infty
[f(i+z,q)-f(i,q)-zf_i(i,q)]z^{-1-\alpha}\dd z.
\end{split}
\end{equation}
Subtracting $zf_i(i,q)$ gives first-order compensation near zero and centers the large jumps.
\Cref{app:stable-normalization} gives the equivalent representation with compensation restricted to small jumps.

A regulator increment acts along $(1,1)$, so its contribution to It\^o's formula is the oblique derivative $f_i(0,q)+f_q(0,q)$.
Define the subclass satisfying the oblique derivative condition
\[
\mathcal C_{\mathrm{ref}}^2(E):=
\left\{f\in\mathcal C_J^2(E):
 f_i(0,q)+f_q(0,q)=0,\ q\ge0\right\}.
\]
For functions in $\mathcal C_{\mathrm{ref}}^2(E)$, the regulator term vanishes.

Let $\tau_m=\inf\{t\ge0:I_t+Q_t\ge m\}$.
For $f\in\mathcal C_{\mathrm{ref}}^2(E)$, It\^o's formula gives
\begin{equation}\label{eq:stopped-dynkin-formula}
f(Y_{t\wedge\tau_m})-f(Y_0)
=\int_0^{t\wedge\tau_m}\gen f(Y_s)\dd s+M_t^{f,m},
\end{equation}
where $M^{f,m}$ is a martingale for each deterministic initial state.
Thus $f$ is in the extended-generator domain in the local-martingale sense, with extended generator $\gen f$.
\Cref{prop:generator-criterion} justifies the stopped formula, including exits caused by jumps; \Cref{prop:stationary-ito} treats stationary test functions with a nonzero oblique derivative.

We write
\[
\jumpgenI g(i):=c_{\mathrm A}\int_0^\infty
[g(i+z)-g(i)-zg'(i)]z^{-1-\alpha}\dd z
\]
for the jump operator on functions of idleness alone.

\subsection{Construction of the Lyapunov function}

For a product-form function $V(i,q)=\e^{\theta q}g(i)$, the oblique derivative condition requires $g'(0)=-\theta g(0)$.
The factor $\e^{-\theta i}$ supplies this boundary derivative.
A correction whose value and derivative vanish at zero can add polynomial growth in idleness without changing the boundary condition.
For $0\le p<\alpha$, define
\begin{equation}\label{eq:polynomial-idleness-correction}
\psi_p(i):=i^2(1+i)^{p-2},\qquad i\ge0.
\end{equation}
Then $\psi_p(0)=\psi_p'(0)=0$ and $\psi_p'(i)=i(1+i)^{p-3}(2+pi)>0$ for $i>0$.
For $p>0$, $\psi_p(i)\sim i^p$; the bounded function $\psi_0$ will be used in the queue-tail estimates at $\theta=\theta_c$.

\begin{lemma}[Jump-operator estimates]\label[lemma]{lem:jump-operator-estimates}
For $0\le p<\alpha$, the function $\jumpgenI\psi_p$ is continuous, the map $(i,q)\mapsto\psi_p(i)$ belongs to $\mathcal C_J^2(E)$, and
\begin{equation}\label{eq:polynomial-jump-estimates}
0<\jumpgenI\psi_p(0)<\infty,
\qquad
\jumpgenI\psi_p(i)=O(i^{p-\alpha})\quad(i\to\infty).
\end{equation}
\end{lemma}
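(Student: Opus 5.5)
The plan is to estimate the integrand $D(i,z):=\psi_p(i+z)-\psi_p(i)-z\psi_p'(i)$ directly, splitting the jump integral at $z=1$ near the origin and at $z=i/2$ for large $i$. First, $\psi_p$ is $C^2$ on a neighborhood of $[0,\infty)$: it is smooth on $(-1,\infty)$. On $[0,\infty)$ it satisfies $|\psi_p''(u)|\le C(1+u)^{p-2}$ and $|\psi_p'(u)|\le C(1+u)^{p-1}$; this follows by differentiating $i^2(1+i)^{p-2}$ and using $i\le 1+i$. Taylor's formula then gives $|D(i,z)|\le Cz^2$ for $z\le1$, uniformly in $i$ on compacts (indeed globally, since $p<2$). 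For $z\ge1$ we use the crude bound $|D(i,z)|\le \psi_p(i+z)+\psi_p(i)+z\psi_p'(i)\le C(1+i+z)^{p}+Cz(1+i)^{p-1}$. For $i$ in a compact set this is $O(z^{\max(p,1)})$, and $\max(p,1)<\alpha$. So $\sup_{i\in K}|D(i,z)|z^{-1-\alpha}$ is integrable on $(0,\infty)$, which gives membership in $\mathcal C_J^2(E)$, since the function does not depend on $q$. Continuity of $\jumpgenI\psi_p$ then follows by dominated convergence with this same majorant.

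For the value at zero, $D(0,z)=\psi_p(z)=z^2(1+z)^{p-2}$, because $\psi_p(0)=\psi_p'(0)=0$. This is strictly positive for $z>0$. It is $O(z^2)$ near $0$ and $O(z^p)$ at infinity, so $\jumpgenI\psi_p(0)=c_{\mathrm A}\int_0^\infty z^{1-\alpha}(1+z)^{p-2}\dd z\in(0,\infty)$. The integral converges because $2-\alpha>0$ at $0$ and $p<\alpha$ at $\infty$.

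For the large-$i$ asymptotics, fix $i\ge1$ and split the integral into $z\le i/2$ and $z>i/2$. On $z\le i/2$, Taylor's formula with $u\in[i,3i/2]$ gives $|D|\le Cz^2(1+i)^{p-2}$. Hence this piece contributes at most $C(1+i)^{p-2}\int_0^{i/2}z^{1-\alpha}\dd z=O(i^{p-2}i^{2-\alpha})=O(i^{p-\alpha})$. On $z>i/2$, the crude bound gives three terms:
\begin{itemize}
\item $\int_{i/2}^\infty (1+i+z)^p z^{-1-\alpha}\dd z\le C\int_{i/2}^\infty z^{p-1-\alpha}\dd z=O(i^{p-\alpha})$, using $1+i+z\le 4z$ and $p<\alpha$;
\item $\psi_p(i)\int_{i/2}^\infty z^{-1-\alpha}\dd z=O(i^p\,i^{-\alpha})$;
\item $\psi_p'(i)\int_{i/2}^\infty z^{-\alpha}\dd z=O(i^{p-1}i^{1-\alpha})$, where we use $\alpha>1$.
\end{itemize}
Summing the pieces yields $\jumpgenI\psi_p(i)=O(i^{p-\alpha})$.

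The main point needing care is the case $p<1$, where $\psi_p'$ and $\psi_p''$ have different sign structures. The case $p=0$ also needs care, since $\psi_0$ is bounded and some of the bounds above are lossy there. In every case, though, only the bounds $|\psi_p^{(k)}(u)|\le C(1+u)^{p-k}$ for $k=0,1,2$ are used, and these hold for all $0\le p<\alpha$, so the argument is uniform. I therefore expect no genuine obstacle beyond verifying these derivative bounds. The only delicate convergence is at $z\to\infty$, and it rests on $p<\alpha$ together with $\alpha>1$ for the compensator term.
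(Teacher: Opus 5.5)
Your proposal is correct and follows essentially the same route as the paper. Both use the derivative bounds $|\psi_p^{(k)}(u)|\le C(1+u)^{p-k}$, a Taylor/crude-bound split at $z=1$ for compact $i$-sets (giving $\mathcal C_J^2$ membership and continuity by dominated convergence), the identity $D(0,z)=\psi_p(z)$ at the origin, and the split at $z=i/2$ for large $i$; at the origin the paper additionally evaluates the integral exactly as $c_{\mathrm A}\Gamma(2-\alpha)\Gamma(\alpha-p)/\Gamma(2-p)$.

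The only slip is a constant. For $i\ge1$ and $z>i/2$ you only get $1+i+z<5z$, not $4z$; the bound $4z$ holds if you take $i\ge2$, as the paper does. This does not affect the argument.
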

\begin{proof}
Fix $0\le p<\alpha$.
At zero, the beta integral gives
\[
\jumpgenI\psi_p(0)
=\cA\int_0^\infty z^{1-\alpha}(1+z)^{p-2}\dd z
=\cA\frac{\Gamma(2-\alpha)\Gamma(\alpha-p)}{\Gamma(2-p)},
\]
which is finite and strictly positive.

From the definition of $\psi_p$ and direct differentiation, for $i\ge0$ we have
\[
0\le\psi_p(i)\le(1+i)^p,\qquad
|\psi_p'(i)|\le C(1+i)^{p-1},\qquad
|\psi_p''(i)|\le C(1+i)^{p-2}.
\]
Uniformly for $i$ in a compact interval, Taylor's theorem therefore bounds the absolute jump remainder by $Cz^2$ for $0<z\le1$, while the bounds on $\psi_p$ and $\psi_p'$ give $C(1+z+z^p)$ for $z>1$.
These bounds are integrable against $z^{-1-\alpha}\dd z$ because $p<\alpha$ and $1<\alpha<2$.
Thus $(i,q)\mapsto\psi_p(i)$ belongs to $\mathcal C_J^2(E)$, and dominated convergence gives continuity of $\jumpgenI\psi_p$.

To estimate $\jumpgenI\psi_p(i)$ as $i\to\infty$, fix $i\ge2$ and split the integral at $z=i/2$.
For $0<z\le i/2$, the bound $|\psi_p''(i+u)|\le Ci^{p-2}$ for $0\le u\le z$ gives a contribution of at most $Ci^{p-2}\int_0^{i/2}z^{1-\alpha}\dd z=O(i^{p-\alpha})$.
For $z>i/2$, the absolute jump remainder is at most $C[(i+z)^p+i^p+zi^{p-1}]$.
The substitution $z=iu$, together with $p<\alpha$ and $\alpha>1$, bounds the integral over this range by $O(i^{p-\alpha})$ as well.
Combining the two contributions proves the asserted bound. \hfill
\end{proof}

Fix $0<p<\alpha$ and $0<\theta<\theta_c$.
Since $\Phi(\theta)<0$, set
\begin{equation*}
\varepsilon_{p,\theta}:=-\frac{\Phi(\theta)}{2\jumpgenI\psi_p(0)}>0,
\qquad g_{p,\theta}(i):=\e^{-\theta i}+\varepsilon_{p,\theta}\psi_p(i),
\qquad V_{p,\theta}(i,q):=\e^{\theta q}g_{p,\theta}(i).
\end{equation*}
This choice of $\varepsilon_{p,\theta}$ preserves negative generator drift at the reflecting face.
The resulting function satisfies
\[
g_{p,\theta}(0)=1,\qquad g_{p,\theta}'(0)=-\theta,\qquad
(V_{p,\theta})_i(0,q)+(V_{p,\theta})_q(0,q)=0.
\]
Moreover,
\begin{equation}\label{eq:lyapunov-weight-comparison}
g_{p,\theta}(i)\sim\varepsilon_{p,\theta}i^p\quad(i\to\infty),\qquad
V_{p,\theta}(i,q)\asymp_{p,\theta}(1+i)^p\e^{\theta q}\quad(i,q)\in E.
\end{equation}
Thus $V_{p,\theta}$ has compact sublevel sets and a strictly positive minimum. 

\begin{proposition}[Foster--Lyapunov inequality]\label[proposition]{prop:foster-lyapunov}
For every $0<p<\alpha$ and $0<\theta<\theta_c$, there are a compact rectangle $K_{p,\theta}$ and constants $\delta_{p,\theta},b_{p,\theta}>0$ such that
\[
\gen V_{p,\theta}\le-\delta_{p,\theta}V_{p,\theta}
+b_{p,\theta}\1_{K_{p,\theta}}.
\]
\end{proposition}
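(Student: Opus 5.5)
Write $V=V_{p,\theta}=\e^{\theta q}g(i)$ with $g=g_{p,\theta}$. The plan is to compute $\gen V$ explicitly, using the product structure and the fact that jumps move only $i$.

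\textbf{Step 1: explicit form of the generator.} Since jumps move only $i$, we have $\jumpgen V(i,q)=\e^{\theta q}\jumpgenI g(i)$, and
\[
\gen V(i,q)=\e^{\theta q}\Bigl[(\beta-i-q)g'(i)-\theta q\,g(i)+\jumpgenI g(i)\Bigr].
\]
Split $g=\e^{-\theta i}+\varepsilon\psi_p$. For the exponential part,
\[
\jumpgenI \e^{-\theta\cdot}(i)=\e^{-\theta i}\,\cA\int_0^\infty(\e^{-\theta z}-1+\theta z)z^{-1-\alpha}\dd z=\kappa_\alpha\theta^\alpha \e^{-\theta i},
\]
which is consistent with \cref{eq:stable-laplace-exponent}. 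The drift part contributes $-(\beta-i-q)\theta\e^{-\theta i}-\theta q\e^{-\theta i}$. Hence the exponential part of the bracket equals
\[
\e^{-\theta i}\bigl[\Phi(\theta)+\theta i\bigr].
\]
The $q$-terms cancel exactly; this is the generator-level reflection of the oblique design.

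\textbf{Step 2: the polynomial part.} The polynomial part of the bracket is
\[
\varepsilon\bigl[(\beta-i-q)\psi_p'(i)-\theta q\psi_p(i)+\jumpgenI\psi_p(i)\bigr].
\]
Its $q$-terms are $-q\bigl(\psi_p'+\theta\psi_p\bigr)\le0$. Therefore
\[
\e^{-\theta q}\gen V\le \Phi(\theta)\e^{-\theta i}+\theta i\e^{-\theta i}+\varepsilon\bigl[(\beta-i)\psi_p'(i)+\jumpgenI\psi_p(i)\bigr]-\varepsilon q\bigl(\psi_p'+\theta\psi_p\bigr)(i).
\]

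\textbf{Step 3: the regions of the state space.} We show the bracket is at most $-\delta g(i)$ outside a compact set, which requires handling large $i$ and large $q$ with small $i$.

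\emph{Large $i$.} Here $\psi_p'(i)\sim p i^{p-1}$, so $-i\psi_p'\sim -p i^p\sim -p\psi_p$. By \Cref{lem:jump-operator-estimates}, $\jumpgenI\psi_p=O(i^{p-\alpha})=o(i^p)$, and the remaining terms are $o(i^p)$. Hence the bracket is at most $-\tfrac p2\varepsilon i^p\le -c\,g(i)$ for $i\ge i_0$, uniformly in $q$, since the dropped $q$-term is nonpositive.

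\emph{Bounded $i\le i_0$, as $i\to0$.} At $i=0$ the bracket, ignoring the $q$-term, equals $\Phi(\theta)+\varepsilon\jumpgenI\psi_p(0)=\Phi(\theta)/2<0$ by the choice of $\varepsilon_{p,\theta}$. By continuity (\Cref{lem:jump-operator-estimates}), the bracket is at most $\Phi(\theta)/4$ for $i\in[0,i_1]$ and all $q$, because the $q$-term is nonpositive. Since $g\le2$ there, this gives $\le -\delta g$.

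\emph{The middle band $i\in[i_1,i_0]$.} Here $\psi_p'+\theta\psi_p\ge c_1>0$, so the $q$-term $-\varepsilon q c_1$ dominates for $q\ge q_0$. The other terms are bounded on the band and $g$ is bounded there, so the bracket is at most $-\delta g$ for $q\ge q_0$.

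\textbf{Step 4: conclusion.} Take $K=[0,i_0]\times[0,q_0]$. On $K$ the quantity $\gen V+\delta V$ is bounded by continuity, which gives $b$. Outside $K$ we have $\gen V\le-\delta V$. Finally, the stopped Dynkin formula \cref{eq:stopped-dynkin-formula} applies because $V\in\mathcal C_{\mathrm{ref}}^2(E)$: the oblique condition holds by construction, and the membership $V\in\mathcal C_J^2(E)$ follows from \Cref{lem:jump-operator-estimates} together with the boundedness of the remainder of $\e^{-\theta i}$.

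The main obstacle is the boundary region with small $i$ and large $q$. There the only negativity comes from $\Phi(\theta)$, which is not proportional to $q$, and the $\theta i\e^{-\theta i}$ term must be absorbed. This is where the exact cancellation of the $q$-terms in Step 1 and the calibration of $\varepsilon_{p,\theta}$ are essential, and I expect the uniform-in-$q$ estimate near $i=0$ to need the most care.
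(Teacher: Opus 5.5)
Your proof is correct and follows the paper's argument. The paper also writes $\gen V_{p,\theta}/V_{p,\theta}$ as a $q$-free term minus $q\,\varepsilon_{p,\theta}(\psi_p'+\theta\psi_p)/g_{p,\theta}$, where the $q$-free term equals $\Phi(\theta)/2$ at $i=0$ and tends to $-p$ as $i\to\infty$. It then treats the same three regions: small $i$ and large $i$ uniformly in $q$, and a middle band handled by taking $q$ large. The differences are cosmetic: you compare $\e^{-\theta q}\gen V$ with $-\delta g$ rather than working with the ratio, and your closing worry about small $i$ with large $q$ is already settled by your own observation that the $q$-term is nonpositive.
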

\begin{proof}
For the exponential factor, the compensated jump remainder satisfies
\[
0\le \e^{-\theta(i+z)}-\e^{-\theta i}+\theta z\e^{-\theta i}
\le
\begin{cases}
\theta^2z^2/2,&0<z\le1,\\
\theta z,&z>1,
\end{cases}
\qquad i\ge0.
\]
The first bound follows from Taylor's theorem, and the second from $\e^{-\theta z}\le1$.
Both bounds are integrable against $z^{-1-\alpha}\dd z$ because $1<\alpha<2$.
Together with \Cref{lem:jump-operator-estimates} and boundedness of $\e^{\theta q}$ on compact $q$-intervals, they show that $V_{p,\theta}\in\mathcal C_J^2(E)$.
Since $(V_{p,\theta})_i(0,q)+(V_{p,\theta})_q(0,q)=0$, we have $V_{p,\theta}\in\mathcal C_{\mathrm{ref}}^2(E)$, so \Cref{prop:generator-criterion} applies.

Writing $g=g_{p,\theta}$ and using $\jumpgenI\e^{-\theta i}=\kappa_\alpha\theta^\alpha\e^{-\theta i}$, differentiation gives
\begin{equation*}
\frac{\gen V_{p,\theta}}{V_{p,\theta}}(i,q)
=\frac{(\beta-i)g'(i)+\jumpgenI g(i)}{g(i)}
-q\frac{\varepsilon_{p,\theta}[\psi_p'(i)+\theta\psi_p(i)]}{g(i)}.
\end{equation*}
The first term is continuous in $i$, equals $\Phi(\theta)/2<0$ at zero, and tends to $-p$ as $i\to\infty$, by \cref{eq:polynomial-jump-estimates}.
This term controls the Lyapunov drift near $i=0$ and for large $i$.
Choose $\delta>0$ and $0<i_0<i_1$ so that it is at most $-\delta$ on $[0,i_0]\cup[i_1,\infty)$.
Since the coefficient of $-q$ is nonnegative, the full ratio is also at most $-\delta$ in these two regions, uniformly over $q\ge0$.

It remains to control $i\in[i_0,i_1]$ when $q$ is large.
On this interval, the first term is bounded above, whereas the coefficient of $-q$ has a strictly positive minimum.
Thus there is $q_1>0$ such that the ratio is at most $-\delta$ for every $i\in[i_0,i_1]$ and $q\ge q_1$.
Together, these bounds give $\gen V_{p,\theta}\le-\delta V_{p,\theta}$ outside $K_{p,\theta}:=[0,i_1]\times[0,q_1]$.
Finally, continuity bounds $(\gen+\delta)V_{p,\theta}$ on this rectangle.
Taking $b_{p,\theta}>0$ larger than this bound and setting $\delta_{p,\theta}=\delta$ proves the inequality. \hfill
\end{proof}

\section{Harris recurrence and exponential ergodicity}\label{sec:harris}

The Lyapunov inequality supplies the drift condition for Harris theory.
We now establish a fixed-time minorization and use attraction of the zero-input dynamics to verify smallness, irreducibility, and aperiodicity.

\subsection{Deterministic attraction and fixed-time minorization}\label{sec:deterministic-minorization}

Let $\varphi_t(y)$ be the reflected solution with zero input.
Its equilibrium is $(\beta,0)$.
\begin{lemma}[Attraction of the zero-input dynamics]\label[lemma]{lem:deterministic-attraction}
For every compact $K\subset E$,
\[
\lim_{t\to\infty}\sup_{y\in K}\norm{\varphi_t(y)-(\beta,0)}=0.
\]
\end{lemma}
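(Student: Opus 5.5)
The plan is to reduce the uniform statement to pointwise convergence, using \Cref{prop:initial-nonexpansiveness} twice: once with a fixed point as the comparison solution, and once for equicontinuity in the initial state. The zero input is a continuous path starting at zero with no negative jumps, so \cref{eq:initial-nonexpansive} applies to $\varphi_t$.

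First, the point $(\beta,0)$ is stationary for the zero-input dynamics. Indeed, with $I\equiv\beta$, $Q\equiv0$ and $U\equiv0$, both right-hand sides of \cref{eq:reflected-sde} vanish, and \cref{eq:regulator-condition} holds trivially. Pathwise uniqueness from \Cref{prop:well-posedness} gives $\varphi_t(\beta,0)=(\beta,0)$. Taking $\widetilde y=(\beta,0)$ in \cref{eq:initial-nonexpansive} yields, for every $y\in E$ and $t\ge0$,
\[
\|\varphi_t(y)-(\beta,0)\|_1+\int_0^t|I_s-\beta|\dd s\le\|y-(\beta,0)\|_1 .
\]
Hence the trajectory stays in a bounded set, the $\ell^1$ distance to $(\beta,0)$ is nonincreasing in $t$ (apply the same bound from time $s$ onward, using the flow property), and $\int_0^\infty|I_s-\beta|\dd s<\infty$.

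Second, I would upgrade integrability to convergence $I_t\to\beta$ by a Barbalat-type argument, which requires $I$ to be uniformly Lipschitz along the trajectory. Away from the face, $\dot I=\beta-I-Q$ is bounded because the state is bounded. At the face, the regulator is the one-dimensional Skorokhod regulator of the driver $t\mapsto I_0+\int_0^t(\beta-I_s-Q_s)\dd s$. That driver is Lipschitz with constant $\sup_s|\beta-I_s-Q_s|$, and the Skorokhod map preserves Lipschitz bounds, so $U$ is Lipschitz as well; concretely $\dd U_t\le(Q_t-\beta)^+\dd t$ on $\{I_t=0\}$. Hence $I$ is uniformly Lipschitz. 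Together with $\int_0^\infty|I_s-\beta|\dd s<\infty$, this gives $I_t\to\beta$.

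Once $I_t>\beta/2$ for all $t\ge T$, condition \cref{eq:regulator-condition} forces $U$ to be constant on $[T,\infty)$. Then \cref{eq:queue-variation-of-constants} gives $Q_t=\e^{-(t-T)}Q_T\to0$. This proves pointwise convergence $\varphi_t(y)\to(\beta,0)$.

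Finally, for uniformity on a compact $K$, fix $\eta>0$ and cover $K$ by finitely many $\ell^1$-balls of radius $\eta$ with centers $y_1,\dots,y_N$. By \cref{eq:initial-nonexpansive}, $\|\varphi_t(y)-\varphi_t(y_k)\|_1\le\eta$ whenever $\|y-y_k\|_1\le\eta$. Choose $T$ so that $\|\varphi_t(y_k)-(\beta,0)\|_1\le\eta$ for all $k$ and all $t\ge T$. Then $\sup_{y\in K}\|\varphi_t(y)-(\beta,0)\|_1\le2\eta$ for $t\ge T$, and the Euclidean norm is dominated by the $\ell^1$ norm.

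The main obstacle is the Lipschitz control of $U$ in the second step, since $U$ is defined only implicitly through reflection. I would handle it via the explicit one-dimensional Skorokhod formula $U_t=\sup_{s\le t}\bigl(-X_s\bigr)^+$ applied to the continuous driver $X_t=I_0+\int_0^t(\beta-I_s-Q_s)\dd s$, whose Lipschitz constant bounds that of $U$.
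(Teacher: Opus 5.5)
Your argument is correct, but it reaches pointwise convergence by a different route from the paper; the final finite-net step is the same in both.

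\textbf{The paper's route.} The paper solves the zero-input dynamics explicitly. In the interior, $Q_t=q\e^{-t}$ and $I_t=\beta+(i-\beta)\e^{-t}-qt\e^{-t}$ up to the first boundary hit. A hit from the interior occurs with $Q_\tau\ge\beta$. The state then slides along $I=0$ with $\dot Q=-\beta$ until it reaches $(0,\beta)$. From there it follows $I=\beta[1-(1+s)\e^{-s}]$, $Q=\beta\e^{-s}$ with no further reflection.

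\textbf{Your route.} You compare with the fixed point $(\beta,0)$ in \cref{eq:initial-nonexpansive}, which gives boundedness of the trajectory and $\int_0^\infty|I_s-\beta|\dd s<\infty$. The one-dimensional Skorokhod regulator of a Lipschitz driver is Lipschitz with the same constant, so $I$ is globally Lipschitz. Barbalat's lemma then gives $I_t\to\beta$, after which the regulator is inactive and $Q$ decays.

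\textbf{Comparison.} Your approach avoids the case analysis: you need not verify $Q_\tau\ge\beta$ at an interior hit or treat $i=0$, $q\le\beta$ separately. It uses only the dissipation term in the nonexpansiveness estimate and the existence of the fixed point. It would therefore survive modifications of the model that preserve those two features, but it gives no rate. The paper's explicit trajectories show the sliding mechanism on the reflecting face and implicitly give quantitative convergence on compacts. They also parallel the reference-path formulas used later in the minorization construction.
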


\begin{proof}
Fix $y=(i,q)\in E$ and write $(I_t,Q_t)=\varphi_t(y)$ and $\tau:=\inf\{t\ge0:I_t=0\}$.
For $i>0$, the regulator is constant before $\tau$, and
\[
Q_t=q\e^{-t},
\qquad
I_t=\beta+(i-\beta)\e^{-t}-qt\e^{-t}
\]
holds up to the first boundary hit, or for all $t\ge0$ if $\tau=\infty$.
A finite boundary hit from the interior has $Q_\tau\ge\beta$.
If $i=0$ and $q\le\beta$, the regulator is inactive and the same formula applies for all $t\ge0$.

In the remaining cases with $\tau<\infty$, including $\tau=0$ when $i=0$ and $q>\beta$, the state stays on $I=0$ with $\dot Q=-\beta$ until time $\sigma:=\tau+(Q_\tau-\beta)/\beta$.
At that time $(I_\sigma,Q_\sigma)=(0,\beta)$, and $Q_{\sigma+s}=\beta\e^{-s}$ and $I_{\sigma+s}=\beta[1-(1+s)\e^{-s}]$ for $s\ge0$.
These formulas give pointwise convergence to $(\beta,0)$.
By \Cref{prop:initial-nonexpansiveness}, the maps $\varphi_t$ are one-Lipschitz in the $\ell^1$ norm, uniformly in $t$.
A finite-net argument therefore gives convergence uniformly over compact initial sets. \hfill
\end{proof}

\begin{proposition}[Fixed-time minorization]\label[proposition]{prop:fixed-time-minorization}
Fix $K_0=[0,\bar i_0]\times[0,\bar q_0]$ with $\bar i_0>\beta$ and $\bar q_0>0$.
There exist $\Delta>0$, a nonempty open rectangle $\mathcal G\Subset(0,\infty)^2$, and $\varepsilon_0>0$ such that
\begin{equation}\label{eq:fixed-time-minorization-main}
P_\Delta(y,H)\ge\varepsilon_0\nu_{\mathcal G}(H),
\qquad y\in K_0,\quad H\in\mathcal B(E),
\qquad
\nu_{\mathcal G}(H):=\frac{\operatorname{Leb}_2(H\cap \mathcal G)}{\operatorname{Leb}_2(\mathcal G)}.
\end{equation}
\end{proposition}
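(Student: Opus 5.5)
The minorization will be built from an explicit event on the input path in three phases, following the mechanism described in the introduction. Boundary contact gives the queue randomness, and two selected jump sizes control the two terminal coordinates.

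\emph{Phase 1 (drive to the boundary).} On $[0,t_1]$, use a nearly deterministic input: require $\sup_{s\le t_1}|\Stable(s)+\gamma s|\le\eta$ for a fixed drift $\gamma>0$ large enough that every solution from $K_0$ reaches $I=0$ and stays pinned near the face. Then, for a time interval of definite length, $Q$ evolves close to $\dot Q=-(\beta+\gamma)+\dots$ driven by the regulator. Rather than control this tightly, I would only require a weaker conclusion. At time $t_1$ the state lies in a small neighbourhood $N$ of a fixed boundary point $(0,q^*)$, uniformly over $y\in K_0$.

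For this I would use three ingredients. The first is the zero-input comparison of \Cref{lem:deterministic-attraction}, applied to the shifted input $-\gamma s$, which acts as a larger drift parameter. The second is the pathwise continuity of the solution map in the input, from the appendix. The third is \Cref{prop:initial-nonexpansiveness}, which lets a finite net of initial states suffice. The probability of this tube event is positive by the support theorem for L\'evy processes: small-ball probabilities around any continuous path of bounded variation are positive. Since the map is nonexpansive in the initial state, I can take $N$ to have diameter proportional to $\eta$ plus a contraction factor.

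\emph{Phase 2 (two jumps generate a 2D density).} On $[t_1,t_1+\Delta_2]$, again follow a tube near a drift path that keeps $I$ at $0$. This runs the regulator, so $Q$ is pushed upward at rate roughly $\gamma'$ and acquires an amount depending on how long the boundary is occupied. At a time $s_1$ insert a jump of size $z_1$ in a small interval; it lifts $I$ off the face, so the regulator stops and the total regulator mass, and hence $Q$ at later times, depends smoothly and nondegenerately on $s_1$. Then let the process evolve by the reflection-free dynamics, which are linear and explicit: $Q$ decays as $\e^{-t}$ and $I$ solves a linear ODE driven by $Q$ and the input. At a later time $s_2$ insert a second jump $z_2$, which shifts only $I$.

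The terminal state at $\Delta$ is then a smooth function $F(s_1,z_1,s_2,z_2;\text{small-noise path},y')$, with $y'\in N$. The two independent jump epochs and sizes have joint density bounded below on the chosen box: Poisson jump times are uniform and sizes have density $\cA z^{-1-\alpha}$, with all other jumps above a threshold excluded. I would therefore pick two of these four parameters whose Jacobian is nonzero. The pair $(s_1,z_2)$ is natural: $s_1$ controls $Q_\Delta$ through the regulator duration, and $z_2$ shifts $I_\Delta$ one-to-one. The Jacobian is then triangular with nonzero diagonal, since $\partial Q_\Delta/\partial s_1$ is a nonzero multiple of $\e^{-(\Delta-s_1)}$ times the regulator rate. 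The remaining parameters and small-jump noise act as a perturbation. Pushing forward a density bounded below through a $C^1$ map with Jacobian bounded away from zero yields a density bounded below on some open rectangle $\mathcal G$, uniformly in perturbations small enough.

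\emph{Main obstacle.} The hard step is uniformity. The small-jump noise and the initial point $y'\in N$ perturb the map $F$, and I need the image rectangle $\mathcal G$ and the density bound to be independent of these. I would handle this by conditioning on the small-jump part, which is independent of the large jumps by the L\'evy--It\^o decomposition, and restricting it to a tube of radius $\eta$. On that tube, reflection-free stretches depend continuously on the data. The regulator duration in Phase 2 is also Lipschitz in the data, since on the face $U$ is explicitly a running supremum. Choosing $\eta$ and the diameter of $N$ small relative to the Jacobian lower bound, an inverse-function argument with quantitative constants gives a common rectangle $\mathcal G$ contained in every perturbed image. Multiplying the positive probabilities of the Phase 1 tube, the Phase 2 tube, and the jump-parameter box then gives $\varepsilon_0$ in \cref{eq:fixed-time-minorization-main}, with $\Delta=t_1+\Delta_2$.
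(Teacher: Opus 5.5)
Your overall architecture (contract the compact set, then let two large jumps generate a triangular terminal map, uniformly over the independent small-jump noise) matches the paper's. The parameter you use for the queue coordinate, however, does not work.

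\textbf{The jump time $s_1$ gives no density in the queue.} In your design the path before $s_1$ does not depend on $s_1$, and after $s_1$ the evolution is reflection-free. Hence $Q_\Delta=\e^{-\Delta}R(s_1)$, where $R(s):=\e^{s}Q_s$ is computed along the no-jump path and, by \cref{eq:queue-variation-of-constants}, satisfies $\dd R(s)=\e^{s}\dd U_s$. For every truncation level the small-jump part $M^{(\varepsilon)}$ has unbounded variation, because $\int_0^\varepsilon z\,\nu_{\mathrm A}(\dd z)=\infty$. Every time at which the pinned idleness is zero is a running-infimum time of the free path. It is therefore also a running-infimum time of $M^{(\varepsilon)}(t)-Ct$, where $C$ is a pathwise bound on the drift rate. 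An unbounded-variation L\'evy process spends zero Lebesgue time at its running infimum. So $\dd U$ is carried by a Lebesgue-null set of times. Conditionally on the small-jump path, $s_1\mapsto Q_\Delta$ is then continuous and monotone with zero derivative almost everywhere, and the pushforward of the uniform law of $s_1$ is singular with respect to Lebesgue measure. The ``regulator rate'' in your Jacobian exists only for the noiseless reference path, and no perturbative inverse-function argument can transfer it. The running supremum is Lipschitz in the input path, but that gives no regularity in the jump time. So conditioning on the small-jump part, as you propose, yields no density in the queue coordinate, and the claimed smoothness of $F$ fails.

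\textbf{How the paper avoids this.} The paper puts the queue randomness in the first jump \emph{size} $a$ instead. The jump occurs in a short window while the reference path sits on the face. The large compensator drift $d=m_\varepsilon-\beta$ forces the excursion back to the face within $\tau_{\mathrm r}<1$, and the regulator then runs again until the second jump. \Cref{lem:excursion-queue-sensitivity} is a pathwise comparison valid for every common input without negative jumps. It gives $\kappa(a'-a)\le q(a)-q(a')\le a'-a$ uniformly over the rough noise. This monotone one-Lipschitz bound, together with the affine dependence on $b$, is all that \cref{eq:fixed-time-two-jump-integration} needs. No differentiability is used, and the jump times only contribute the factor $|W_1||W_2|$. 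In your design $z_1$ could not replace $s_1$, since without a return to the face $Q_\Delta$ does not depend on $z_1$.

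\textbf{Phase~1 also needs repair.} With a drift that pins $I$ at $0$, queue differences do not contract: on $\{I^y=I^{\widetilde y}\}$ the proof of \Cref{prop:initial-nonexpansiveness} gives $\dd(Q^y-Q^{\widetilde y})=0$. Moreover, \Cref{lem:deterministic-attraction} requires a positive drift parameter, whose attractor lies in the interior. Thus a queue spread of order $\bar q_0$ survives to time $t_1$, and no small $N$ works uniformly over $K_0$. The paper first uses the small-ball event and attraction under the true drift to bring $K_0$ into $B_*(r)$ around $(\beta,0)$. Only then does it drive the process to the face.
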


\Cref{app:fixed-time-minorization} constructs a local minorization near $(\beta,0)$ using the L\'evy--It\^o decomposition.
With the small-jump martingale uniformly small and no large jump initially, the centering drift brings the reference trajectory to $I=0$.
The first selected jump size determines the terminal queue through a strictly decreasing bi-Lipschitz map.
The second jump starts a reflection-free interval and changes terminal idleness affinely without changing the queue.
Deterministic attraction, the small-ball bound \cref{eq:stable-small-ball}, and the pathwise estimate \cref{eq:fixed-time-solution-lipschitz} extend this local lower bound to $K_0$.

\subsection{Small sets, irreducibility, and aperiodicity}

Fix a choice of the minorization in \Cref{prop:fixed-time-minorization}.
A set $K$ is small if there are $t>0$ and a nonzero finite measure $\nu$ such that $P_t(y,\cdot)\ge\nu(\cdot)$ for every $y\in K$.
Such a set is also petite, since deterministic sampling is allowed in the definition of petiteness.

\begin{proposition}[Small sets, irreducibility, and aperiodicity]\label[proposition]{prop:smallness-irreducibility}
The process $Y$ is $\nu_{\mathcal G}$-irreducible and aperiodic, and every compact subset of $E$ is small.
\end{proposition}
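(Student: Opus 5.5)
The plan is to obtain all three properties from the fixed-time minorization (\Cref{prop:fixed-time-minorization}) together with the Lyapunov drift (\Cref{prop:foster-lyapunov}). The main point is that every state reaches the rectangle $K_0$ with positive probability, and does so uniformly over compact sets.

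\textbf{Compact sets are small.} Fix a compact $K\subset E$. Enlarge $K_0=[0,\bar i_0]\times[0,\bar q_0]$ so that it contains a neighborhood of $(\beta,0)$ in $E$; this is allowed because $\bar i_0>\beta$ is arbitrary. By \Cref{lem:deterministic-attraction} there is $T$ such that $\varphi_T(y)$ lies within distance $\eta$ of $(\beta,0)$ for all $y\in K$, and we choose $\eta$ so that the $2\eta$-ball lies inside $K_0$.

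Next we show that the stochastic solution stays close to the zero-input solution with uniformly positive probability. Consider the event $\{\|\Stable\|_T\le\eta'\}$, which has positive probability by the small-ball bound \cref{eq:stable-small-ball}. On this event, the finite-horizon input-Lipschitz estimate \cref{eq:fixed-time-solution-lipschitz} gives $\|Y^y_T-\varphi_T(y)\|\le C_T\eta'\le\eta$ for all $y\in K$ simultaneously. Hence $\inf_{y\in K}P_T(y,K_0)\ge c>0$.

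By the Chapman--Kolmogorov equation and \cref{eq:fixed-time-minorization-main},
\[
P_{T+\Delta}(y,H)\ge\int_{K_0}P_T(y,\dd z)P_\Delta(z,H)\ge c\,\varepsilon_0\nu_{\mathcal G}(H),\qquad y\in K.
\]
So $K$ is small, with minorizing measure $c\varepsilon_0\nu_{\mathcal G}$.

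\textbf{Irreducibility.} Any $y\in E$ lies in some compact set, so the previous step gives $P_{T(y)+\Delta}(y,H)>0$ whenever $\nu_{\mathcal G}(H)>0$. This is $\nu_{\mathcal G}$-irreducibility, even in the strong form of positivity at a single fixed time.

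\textbf{Aperiodicity.} In continuous time, aperiodicity in the sense of \cite{DownMeynTweedie1995} requires a small set $C$ with $\nu(C)>0$ such that $P_t(y,C)>0$ for all $y\in C$ and all sufficiently large $t$. The sampled-chain version asks that some skeleton be aperiodic, and it suffices to exhibit an irreducible skeleton.

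Take $C=\overline{\mathcal G}$, which is compact and hence small, with $\nu_{\mathcal G}(C)=1$. Apply the smallness argument with $K=C$, but with $T$ allowed to vary. \Cref{lem:deterministic-attraction} gives a uniform-in-$y\in C$ time $T_0$ after which $\varphi_t(y)$ stays in the $\eta$-neighborhood of $(\beta,0)$ for all $t\ge T_0$, not just at one time. The small-ball and Lipschitz estimates hold for each fixed $t\in[T_0,T_0+1]$, with constants uniform over that interval. Therefore $P_{t+\Delta}(y,\cdot)\ge c'\nu_{\mathcal G}$ for all $t\ge T_0$ and $y\in C$. In particular $P_s(y,C)>0$ for all $s\ge T_0+\Delta$ and $y\in C$, which rules out periodicity: for instance, the $h$-skeleton is irreducible and aperiodic for every $h>0$.

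\textbf{Main obstacle.} The delicate step is the uniformity in the first part. We need the input-perturbation estimate \cref{eq:fixed-time-solution-lipschitz} to control the reflected solution in sup norm, with a constant independent of the initial state in $K$, and we need the attraction to hold uniformly over time intervals for the aperiodicity claim. The former should follow from the Skorokhod-map Lipschitz property underlying \Cref{prop:well-posedness}. The latter follows from \Cref{lem:deterministic-attraction} as stated, since that lemma already gives a limit as $t\to\infty$ uniformly over $K$.
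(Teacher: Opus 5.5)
Your proof is correct and follows essentially the same route as the paper: uniform deterministic attraction, the small-ball bound and the input-Lipschitz estimate give $\inf_{y\in K}P_t(y,K_0)>0$, and Chapman--Kolmogorov with the fixed-time minorization then yields smallness, irreducibility via singletons, and aperiodicity via a compact set containing $\overline{\mathcal G}$. One small correction: in the continuous-time sense of Down et~al., $\nu_{\mathcal G}$-irreducibility requires positive expected occupation time, so positivity at a single time is not by itself the ``strong form''. Your aperiodicity step already supplies the fix, since attraction for all $t\ge T_0$ (with a $t$-dependent positive constant, which is all that is needed) gives positivity at all large times, and the paper uses this from the outset.
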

\begin{proof}
Fix a compact $K\subset E$.
\Cref{lem:deterministic-attraction} gives $t_K<\infty$ and $r_K>0$ such that the zero-input solutions from $K$ stay at distance at least $r_K$ from $E\setminus K_0$ for $t\ge t_K$.
Let $C_t$ be the solution-map Lipschitz constant in \cref{eq:fixed-time-solution-lipschitz}.
That estimate and the small-ball bound \cref{eq:stable-small-ball} give
\[
a_K(t):=\inf_{y\in K}P_t(y,K_0)
\ge\Pp(\|\Stable\|_t<r_K/(2C_t))>0,
\qquad t\ge t_K.
\]
The semigroup property and \Cref{prop:fixed-time-minorization} therefore imply
\begin{equation}\label{eq:compact-time-minorization}
P_t(y,H)\ge\varepsilon_0a_K(t-\Delta)\nu_{\mathcal G}(H),
\qquad y\in K,\quad H\in\mathcal B(E),\quad t\ge t_K+\Delta.
\end{equation}
Choosing any such time proves smallness.
Taking $K=\{y\}$ gives positive transition probabilities at all sufficiently large times, and hence positive expected occupation, for every set of positive $\nu_{\mathcal G}$-measure.
Thus the process is $\nu_{\mathcal G}$-irreducible.
Attraction of the zero-input dynamics and the small-ball event also give positive probability of entering every rectangle $[0,M_I]\times[0,M_Q]$ with $M_I>\beta$ and $M_Q>0$ from every initial state.
These rectangles are compact, hence closed and petite.

For aperiodicity, choose a compact rectangle $K\supset\overline{\mathcal G}$.
It is small and satisfies $\nu_{\mathcal G}(K)=1$.
By \cref{eq:compact-time-minorization},
\[
P_t(y,K)\ge\varepsilon_0a_K(t-\Delta)>0,
\qquad y\in K,\quad t\ge t_K+\Delta.
\]
This verifies the continuous-time aperiodicity condition of Down et~al. \cite{DownMeynTweedie1995}. \hfill
\end{proof}

\subsection{Recurrence, moment bounds, and exponential convergence}

We now apply the continuous-time Harris theorem of Down et~al. (\cite{DownMeynTweedie1995}, Theorem~5.2\textup{(c)}) to prove positive Harris recurrence, stationary moment bounds, and exponential convergence.
Throughout this subsection, fix an arbitrary $p\in(0,\alpha)$ and $\theta\in(0,\theta_c)$, and let $V$ be a positive scalar multiple of $V_{p,\theta}$ chosen so that $V\ge1$.

\begin{proof}[Proof of \Cref{prop:positive-harris-recurrence}]
The process is nonexplosive and Borel right by \Cref{prop:well-posedness}, and irreducible and aperiodic by \Cref{prop:smallness-irreducibility}.
The drift inequality in \Cref{prop:foster-lyapunov} holds with a compact petite set.
\Cref{lem:lyapunov-dynkin} proves the unstopped Dynkin identity for $V$, so the inequality holds in the extended-generator sense required by Down et~al. (\cite{DownMeynTweedie1995}, Theorem~5.2\textup{(c)}).
That theorem gives positive Harris recurrence and a unique invariant probability measure $\pi$. \hfill
\end{proof}

We next record the required moment bound.
\begin{lemma}[Stationary polynomial--exponential moment bounds]\label[lemma]{lem:stationary-moment-bounds}
For $p\in(0,\alpha)$ and $\theta\in(0,\theta_c)$,
\begin{equation}\label{eq:stationary-moment-bound}
\E_\pi[(1+I)^p\e^{\theta Q}]<\infty.
\end{equation}
\end{lemma}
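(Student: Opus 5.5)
The plan is to derive the moment bound from the drift inequality in \Cref{prop:foster-lyapunov}, using a standard truncation-and-averaging argument against $\pi$. There are three steps: compare weights, control $V$ along the process from a fixed state, and pass to the stationary law.

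\emph{Step 1 (reduction to $V$).} By \cref{eq:lyapunov-weight-comparison}, $V_{p,\theta}\asymp_{p,\theta}(1+i)^p\e^{\theta q}$. It therefore suffices to show $\pi(V)<\infty$, where $V$ is the normalized multiple with $V\ge1$.

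\emph{Step 2 (uniform-in-time bound from a fixed state).} Fix $y\in E$. Combining the unstopped Dynkin identity of \Cref{lem:lyapunov-dynkin} with $\gen V\le-\delta V+b\1_K$ gives
\[
\frac{\dd}{\dd t}P_tV(y)\le-\delta P_tV(y)+b.
\]
If only the stopped formula \cref{eq:stopped-dynkin-formula} is available, I would apply it to $\e^{\delta t}V$ up to $\tau_m$ and then let $m\to\infty$ using Fatou. Either way, Gronwall yields
\[
P_tV(y)\le \e^{-\delta t}V(y)+b/\delta .
\]
This bound is uniform in $t$.

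\emph{Step 3 (transfer to $\pi$).} By \Cref{prop:positive-harris-recurrence}, $\pi$ is the unique invariant law and $Y$ is positive Harris recurrent and aperiodic. Hence $P_t(y,\cdot)\to\pi$ in total variation; Down et~al.\ give this in the aperiodic positive Harris setting. Fix $N$. Since $V\wedge N$ is bounded and measurable,
\[
\pi(V\wedge N)=\lim_{t\to\infty}P_t(V\wedge N)(y)\le\limsup_{t\to\infty}P_tV(y)\le b/\delta .
\]
Letting $N\to\infty$ by monotone convergence gives $\pi(V)\le b/\delta<\infty$.

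If I want to avoid invoking convergence, there is an alternative for Step 3. Integrate the bound of Step 2 over $y$ against $\pi$ restricted to a sublevel set $\{V\le c\}$ of positive mass, and use invariance together with $\pi(V\wedge N)=\pi(P_t(V\wedge N))$. Alternatively, one can cite directly the moment conclusion $\pi(V)<\infty$ contained in Theorem~5.2(c) of \cite{DownMeynTweedie1995}, whose hypotheses were already checked in the proof of \Cref{prop:positive-harris-recurrence}.

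The main obstacle is the justification in Step 2, namely passing from the local-martingale form \cref{eq:stopped-dynkin-formula} to an inequality for $P_tV$ without assuming integrability in advance. Jumps can exit $\{I+Q<m\}$ by a large overshoot. I would handle this by applying the stopped identity to $\e^{\delta s}V(Y_s)$ and using $V\ge0$ to drop the boundary term. Fatou as $m\to\infty$ then works because $\tau_m\to\infty$ by nonexplosiveness (\Cref{prop:well-posedness}). The rest of the argument is routine.
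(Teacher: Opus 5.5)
Your proof is correct, but it takes a different route from the paper's. The paper's proof is a citation. The hypotheses checked for \Cref{prop:positive-harris-recurrence} hold for every pair $(p,\theta)$, so the integrability conclusion of Theorem~5.2(c) in \cite{DownMeynTweedie1995} gives $\pi(V)<\infty$ directly. Uniqueness of the invariant law identifies the same $\pi$ for every pair, and \cref{eq:lyapunov-weight-comparison} finishes. You instead derive the bound by hand. The Dynkin identity of \Cref{lem:lyapunov-dynkin} (or the stopped formula for $\e^{\delta t}V$ plus Fatou) and Gronwall give $P_tV(y)\le\e^{-\delta t}V(y)+b/\delta$, and then you pass to $\pi$. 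This yields the explicit bound $\pi(V)\le b/\delta$ in terms of the Foster--Lyapunov constants, without treating the $V$-uniform ergodicity theorem as a black box.

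Of your versions of Step~3, the invariance one is the most economical, and it needs no restriction to a sublevel set. Pointwise $P_t(V\wedge N)\le\min\{N,\e^{-\delta t}V+b/\delta\}$. Invariance and bounded convergence as $t\to\infty$ then give $\pi(V\wedge N)\le b/\delta$ at once. This uses only invariance of $\pi$. It does not need aperiodicity, small sets, or total-variation convergence; the latter rests on the Harris ergodic theorem, for instance via irreducibility of skeletons obtained from \cref{eq:compact-time-minorization}. In that form your argument shows that every invariant probability integrates $V$, so identifying $\pi$ across the pairs $(p,\theta)$ is not needed.
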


\begin{proof}
The hypotheses verified in the proof of \Cref{prop:positive-harris-recurrence} hold for every such pair.
The integrability conclusion of Down et~al. (\cite{DownMeynTweedie1995}, Theorem~5.2\textup{(c)}) therefore gives $\pi(V)<\infty$, where uniqueness identifies the same invariant law $\pi$ for every pair.
The comparison \cref{eq:lyapunov-weight-comparison} yields \cref{eq:stationary-moment-bound}.\hfill
\end{proof}

\begin{proof}[Proof of \Cref{thm:exponential-ergodicity}]
With the hypotheses verified in the proof of \Cref{prop:positive-harris-recurrence}, Down et~al. (\cite{DownMeynTweedie1995}, Theorem~5.2\textup{(c)}) give constants $M,\rho>0$ such that
\[
\|P_t(y,\cdot)-\pi\|_V\le MV(y)\e^{-\rho t},
\qquad y\in E,\quad t\ge0,
\]
where $\|\mu\|_V:=\sup_{|f|\le V}|\mu(f)|$.
By \cref{eq:lyapunov-weight-comparison}, there are constants $c,C>0$ such that $cW_{p,\theta}\le V\le CW_{p,\theta}$.
Thus $\|\mu\|_{p,\theta}\le c^{-1}\|\mu\|_V$, and the asserted bound follows with $M_{p,\theta}=MC/c$ and $\rho_{p,\theta}=\rho$.\hfill
\end{proof}

\section{Structure of the invariant law}\label{sec:stationary-law}

In this section, $Y_t=(I_t,Q_t)$ starts from $Y_0\sim\pi$, independently of future stable increments, and $U$ is its regulator.
We use stationary balance identities to study dependence between $I$ and $Q$, prove absolute continuity, and reconstruct the joint Laplace transform from the queue marginal.
Our pathwise construction then realizes the invariant state from stable input and allows us to compare queue laws as spare capacity or the input scale varies.

\subsection{Stationary balances and the regulator}

We first take stationary expectations in the regulator-cancellation identity \cref{eq:regulator-cancellation} to obtain $\E_\pi I=\beta$.
We then use the queue equation to identify the regulator occupation measure.
Define the regulator occupation measure by 
$$\eta(B):=\E_\pi\int_0^1\1_{\{Q_s\in B\}}\dd U_s\,, \quad \text{for} \quad B\in\mathcal B(\R_+).$$

\begin{proposition}[Stationary balances and regulator occupation]\label[proposition]{prop:regulator-occupation}
The invariant law satisfies $\E_\pi I=\beta$, $0<m_Q<\infty$, and
\begin{equation}\label{eq:regulator-occupation}
\eta(\dd q)=q\,\pi_Q(\dd q).
\end{equation}
Moreover, $\pi_Q((x,\infty))>0$ for every $x>0$.
For every bounded Borel or nonnegative measurable $h$,
\[
\E_\pi\int_0^t h(Q_s)\dd U_s=t\E_\pi[Qh(Q)],\qquad t>0.
\]
In particular, $\E_\pi U_t=tm_Q$ for $t\ge0$.
\end{proposition}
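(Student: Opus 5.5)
The plan is to work throughout under $\Pp_\pi$ and use the stationary moment bounds of \Cref{lem:stationary-moment-bounds} to justify every expectation.

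\emph{Mean idleness.} Take expectations in \cref{eq:regulator-cancellation} at a fixed $t$. Stationarity gives $\E_\pi[I_t-Q_t]=\E_\pi[I_0-Q_0]$, and both sides are finite because $\E_\pi I<\infty$ (take $p=1<\alpha$) and $\E_\pi Q<\infty$ (take any $\theta\in(0,\theta_c)$). We also have $\E\Stable(t)=0$, and Fubini with stationarity gives $\E_\pi\int_0^t(\beta-I_s)\dd s=t(\beta-\E_\pi I)$. Together these yield $\E_\pi I=\beta$.

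\emph{Regulator occupation.} Fix a bounded $C^1$ function $h$ and set $F(q)=\int_0^q h(r)\dd r$. Since $Q$ is continuous and of finite variation, the chain rule applied to the queue equation in \cref{eq:reflected-sde} gives
\[
F(Q_t)-F(Q_0)=-\int_0^t Q_sh(Q_s)\dd s+\int_0^t h(Q_s)\dd U_s .
\]
Here $|F(q)|\le\|h\|_\infty q$, so the left side has zero $\Pp_\pi$-mean. The first integral on the right has mean $t\E_\pi[Qh(Q)]$, which is finite. Taking expectations therefore gives $\E_\pi\int_0^t h(Q_s)\dd U_s=t\E_\pi[Qh(Q)]$. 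To make this rigorous, I would first take $h\equiv1$, which gives $\E_\pi U_t=t m_Q<\infty$. That integrability then allows the identity for general bounded $h$. Next, a monotone class argument extends it from $C^1$ functions to bounded Borel $h$, and monotone convergence extends it to nonnegative measurable $h$. The case $t=1$ with $h=\1_B$ is exactly \cref{eq:regulator-occupation}.

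\emph{Positivity of $m_Q$ and unbounded support of $\pi_Q$.} First, $m_Q>0$. If instead $m_Q=0$, then $U\equiv0$ $\Pp_\pi$-a.s., so $I$ would be an unreflected stable OU process that stays nonnegative. That is impossible: from any $i$, with positive probability the input $\Stable$ drops below $-(i+\beta t+1)$ within time $t$ without large jumps, which by the small-ball and negative-fluctuation properties forces $I$ to hit $0$. Alternatively, one can note that $Q=0$ a.s. contradicts irreducibility with respect to $\nu_{\mathcal G}$ from \Cref{prop:smallness-irreducibility}, since $\mathcal G\subset(0,\infty)^2$ and $\pi$ charges every set of positive $\nu_{\mathcal G}$-measure. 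This second route is cleaner.

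For $\pi_Q((x,\infty))>0$ at every $x$, I would construct paths that reach large $Q$. Starting from a compact set of positive $\pi$-mass, the plan is to keep the input close to the linear path $-Mt$ on $[0,1]$. This keeps $I$ pinned at $0$ and makes $U$ grow at rate roughly $M$, which pushes $Q_1$ above $x$. The support of the stable law on path space (spectrally positive processes with $\alpha>1$ have full support in the Skorokhod topology, and the small-ball bound \cref{eq:stable-small-ball} around a drift path follows by Girsanov or by shifting) together with \cref{eq:fixed-time-solution-lipschitz} gives positive probability. Invariance then gives $\pi_Q((x,\infty))\ge\int\pi(\dd y)P_1(y,\{q>x\})>0$.

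\emph{Main obstacle.} I expect the delicate step to be the positive-probability tube estimate around a nonzero drift path. The small-ball bound only handles tubes around $0$. I would handle this by absorbing the drift: the event $\{\sup_{s\le1}|\Stable(s)+Ms|<r\}$ has positive probability because of the full support of spectrally positive stable processes.
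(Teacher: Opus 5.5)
Your proof is correct, and for $\E_\pi I=\beta$ and the occupation identity it follows the paper's argument. You use the chain rule for $F(Q_t)$, integrability of $U_t=Q_t-Q_0+\int_0^tQ_s\dd s$, and extension from smooth $h$ by equality of finite measures or monotone class. The genuine difference is in proving $\pi_Q((x,\infty))>0$.

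You build a tube event around the drift path $-Ms$ and push $Q$ up through sustained reflection. This needs a path-space support estimate that the paper never states; as you note, \cref{eq:stable-small-ball} only covers tubes around zero. The estimate is true. It follows from the paper's own small-ball construction by choosing the regularly spaced large jumps to cancel only $m_\varepsilon-M$ of the compensating drift instead of all of it.

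Your blanket justification is overstated, though. A spectrally positive process does not have full support in the Skorokhod topology: paths with negative jumps lie outside it. Only the continuous target $-Ms$ is needed, and that does lie in the support.

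The paper avoids all of this. From \cref{eq:regulator-cancellation}, the facts $I_t\ge0$ and $\int_0^tI_s\dd s\ge0$ give the pathwise bound $Q_t\ge q-i-\Stable(t)-\beta t$. The one-dimensional fact \cref{eq:stable-negative-tail} then gives $\Pp_{(i,q)}(Q_t>x)>0$ from every initial state and every $t>0$. Invariance yields $\pi_Q((x,\infty))>0$, and $m_Q>0$ follows immediately.

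Your first argument for $m_Q>0$ already uses exactly this comparison, only applied to $I$ instead of $Q$. Once it is applied to $Q$, both of your separate arguments for $m_Q>0$ (the contradiction and $\nu_{\mathcal G}$-irreducibility) become unnecessary.

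What your route buys is a concrete picture of the large-queue mechanism, with sustained negative input holding $I$ at the face. It also gives a lower bound that is uniform over compact initial sets. The proposition needs neither.
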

\begin{proof}
By \Cref{lem:stationary-moment-bounds} with $p=1$, $\E_\pi I<\infty$ and $\E_\pi Q<\infty$.
Stationarity gives $\E_\pi\int_0^t I_s\dd s=t\E_\pi I<\infty$; together with $\E|\Stable(t)|<\infty$, this justifies taking expectations in \cref{eq:regulator-cancellation}.
Using $\E\Stable(t)=0$ yields $\E_\pi I=\beta$.
The queue equation gives $U_t=Q_t-Q_0+\int_0^tQ_s\dd s$, which is integrable and has mean $t\E_\pi Q$.

For a solution started at $(i,q)$, \cref{eq:regulator-cancellation} and $I_t\ge0$ give $Q_t\ge q-i-\Stable(t)-\beta t$.
By \cref{eq:stable-negative-tail}, $\Pp_{(i,q)}(Q_t>x)>0$ for every $x,t>0$ and every initial state.
Invariance gives $\pi_Q((x,\infty))>0$ for every $x>0$, and hence $m_Q>0$.

For bounded continuous $h$, put $H(q)=\int_0^q h(v)\dd v$.
The bound $|H(q)|\le\|h\|_\infty q$ gives $\E_\pi|H(Q)|<\infty$.
The chain rule and the queue equation give
\[
H(Q_t)-H(Q_0)
=-\int_0^t Q_s h(Q_s)\dd s+\int_0^t h(Q_s)\dd U_s.
\]
Both integrals are absolutely integrable because $h$ is bounded and $\E_\pi U_t=tm_Q<\infty$.
Taking expectations gives the identity for continuous $h$.
Equality of finite measures extends it to bounded Borel $h$, and monotone convergence extends it to nonnegative measurable $h$.
Taking $t=1$ gives \cref{eq:regulator-occupation}.\hfill
\end{proof}

After normalization by $m_Q$, the regulator occupation measure is the size-biased stationary queue law: regulator increments sample $Q$ with a bias proportional to its value.
In the queueing interpretation, these increments correspond to arrivals that find no idle server.
The regulator occupation identity expresses the boundary term in stationary It\^o's formula through the queue marginal.
Here $\gen f$ is the integro-differential expression in \cref{eq:gen-jumpgen}; the boundary term is displayed separately when the oblique derivative is nonzero.
The product tests below use the growth condition
\begin{equation}\label{eq:stationary-exponential-test-class}
|h(q)|+|h'(q)|+|h''(q)|\le C\e^{\theta q},\qquad q\ge0,
\end{equation}
for some $0\le\theta<\theta_c$.

\begin{proposition}[Stationary It\^o identity with the regulator term]\label[proposition]{prop:stationary-ito}
Let $f\in\mathcal C_J^2(E)$ be bounded and satisfy
\[
\pi(|\gen f|)+\E_\pi[Q|f_i(0,Q)+f_q(0,Q)|]<\infty.
\]
Then
\begin{equation}\label{eq:stationary-ito-boundary}
\E_\pi[\gen f(I,Q)]
+\E_\pi\bigl[Q\{f_i(0,Q)+f_q(0,Q)\}\bigr]=0.
\end{equation}
This identity also holds for $f(i,q)=\e^{-si}h(q)$, where $s>0$ and $h\in C^2(\R_+)$ satisfies \cref{eq:stationary-exponential-test-class} for some $C<\infty$ and $0\le\theta<\theta_c$.
For these tests, it becomes
\begin{equation}\label{eq:stationary-product-test}
0=\Phi(s)\E_\pi[\e^{-sI}h(Q)]+s\E_\pi[I\e^{-sI}h(Q)]+\E_\pi\!\left[Q\{sh(Q)-h'(Q)\}(\e^{-sI}-1)\right].
\end{equation}
\end{proposition}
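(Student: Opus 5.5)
The plan is to apply Itô's formula to $f(Y_t)$ for the stationary process and take expectations, in the spirit of \cref{eq:stopped-dynkin-formula}, but keeping the regulator term explicitly. For $f\in\mathcal C_J^2(E)$, Itô's formula for the semimartingale $Y$ gives
\[
f(Y_t)-f(Y_0)=\int_0^t\gen f(Y_s)\dd s+\int_0^t\{f_i(0,Q_s)+f_q(0,Q_s)\}\dd U_s+M_t^f .
\]
Three features make this work. First, $U$ is continuous and increases only when $I_s=0$, so the $\dd U$ term is evaluated at $(0,Q_s)$. Second, $Q$ is continuous by \Cref{prop:well-posedness}. Third, $M^f$ is the compensated jump integral plus the small-jump martingale term. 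I would localize with $\tau_m$ exactly as in \Cref{prop:generator-criterion}, so that the stopped $M^f$ is a true martingale; this step uses the jump-remainder integrability defining $\mathcal C_J^2$.

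Next I take $\E_\pi$ of the stopped identity and let $m\to\infty$. Since $f$ is bounded, the left side converges to $\E_\pi f(Y_t)-\E_\pi f(Y_0)=0$ by bounded convergence and stationarity. For the drift integral, dominated convergence with $\E_\pi\int_0^t|\gen f(Y_s)|\dd s=t\pi(|\gen f|)<\infty$ gives the limit $t\,\E_\pi[\gen f(I,Q)]$. For the boundary integral, \Cref{prop:regulator-occupation} applied to the nonnegative function $|f_i(0,\cdot)+f_q(0,\cdot)|$ shows that the total variation has expectation $t\,\E_\pi[Q|f_i(0,Q)+f_q(0,Q)|]<\infty$. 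Dominated convergence, followed by the signed version of that identity, then yields $t\,\E_\pi[Q\{f_i(0,Q)+f_q(0,Q)\}]$. Dividing by $t$ gives \cref{eq:stationary-ito-boundary}.

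For the product tests $f=\e^{-si}h(q)$, $f$ is no longer bounded, so I would truncate. Take $h_n=h\chi_n$, where $\chi_n$ is a smooth cutoff with $\chi_n=1$ on $[0,n]$ and uniformly bounded first and second derivatives. Each $f_n$ is bounded and in $\mathcal C_J^2$, because the jump acts only on $i$ and $\jumpgenI\e^{-si}=\kappa_\alpha s^\alpha\e^{-si}$. The generator computation is
\[
\gen f_n=\e^{-si}\bigl[(\kappa_\alpha s^\alpha-(\beta-i-q)s)h_n(q)-qh_n'(q)\bigr].
\]
The boundary derivative is $-sh_n(q)+h_n'(q)$. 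Both are bounded by $C(1+i+q)\e^{\theta q}$, which is $\pi$-integrable by \Cref{lem:stationary-moment-bounds}, choosing $p=1$ and some $\theta'\in(\theta,\theta_c)$ to absorb the factor $q$. Dominated convergence as $n\to\infty$ then gives \cref{eq:stationary-ito-boundary} for $f$. It remains to rearrange: $\kappa_\alpha s^\alpha-\beta s=\Phi(s)$, the $si$ term gives $s\E_\pi[I\e^{-sI}h(Q)]$, and the $q$-terms $\E_\pi[Q\e^{-sI}(sh-h')(Q)]$ combine with the boundary term $-\E_\pi[Q(sh-h')(Q)]$ into $\E_\pi[Q\{sh(Q)-h'(Q)\}(\e^{-sI}-1)]$. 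This is \cref{eq:stationary-product-test}.

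The main obstacle is the localization step: justifying that the stopped martingale has zero mean when exits happen by large jumps, and that the boundary term passes to the limit. I would rely on \Cref{prop:generator-criterion} for the former and the occupation identity of \Cref{prop:regulator-occupation} for the latter.
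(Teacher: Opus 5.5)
Your proposal is correct and follows the paper's proof essentially step for step. You use the stopped It\^o formula with the regulator term retained, then dominated convergence under $\pi$ with the occupation identity controlling the $\dd U$ integral, and finally a smooth cutoff in $q$ with $N$-uniform exponential bounds for the product tests $\e^{-si}h(q)$. The only cosmetic difference is that you absorb the $i\e^{-si}$ terms via the mixed moment $\E_\pi[(1+I)\e^{\theta' Q}]<\infty$ and pass to the limit directly in the stationary identity, whereas the paper uses $i\e^{-si}\le 1/(\e s)$; both work.
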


\Cref{app:stationary-ito-proof} proves the proposition.
Equation \cref{eq:stationary-ito-boundary} is a basic adjoint relationship between the stationary and boundary measures \cite{HarrisonWilliams1987,BravermanDaiMiyazawa2017}.
Here it follows by taking stationary expectations in It\^o's formula while retaining the regulator term, as in the L\'evy-input martingale methods of Kella and Whitt \cite{KellaWhitt1992}.

\subsection{Idleness weighting and absolute continuity}

Weighting each stationary state $(i,q)$ by its idle capacity $i$ gives the probability measure $(i/\beta)\pi(\dd i,\dd q)$, since $\E_\pi I=\beta$.
The next theorem identifies its queue marginal.
Let $Q_\pi\sim\pi_Q$, and let $V_{\mathrm u}$ be uniform on $(0,1)$ and independent of $Q_\pi$.

\begin{theorem}[Idleness-weighted queue distribution]\label[theorem]{thm:idleness-weighting}
For every bounded Borel or nonnegative measurable function $\varphi$,
\begin{equation}\label{eq:idleness-weighting}
\frac1\beta\E_\pi[I\varphi(Q)]=\E[\varphi(V_{\mathrm u}Q_\pi)].
\end{equation}
In particular, for every $r\ge0$,
\begin{equation}\label{eq:mixed-idleness-moments}
\E_\pi[IQ^r]=\frac{\beta}{r+1}\E_\pi Q^r,
\qquad
\operatorname{Cov}_\pi(I,Q)=-\frac\beta2m_Q<0.
\end{equation}
\end{theorem}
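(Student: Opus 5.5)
The plan is to apply It\^o's formula to the product $I_t\,h(Q_t)$ under $\Pp_\pi$, where $h$ is built from $\varphi$, and to take stationary expectations. A formal computation shows why this works. Put $f(i,q)=i\,h(q)$. Then $f_i=h$ and $f_q=i h'$. The jump remainder $f(i+z,q)-f(i,q)-zf_i(i,q)$ vanishes identically because $f$ is linear in $i$, so $\jumpgen f=0$. The oblique derivative at the face is $f_i(0,q)+f_q(0,q)=h(q)$. Inserting this into \cref{eq:stationary-ito-boundary} and using the regulator occupation identity of \Cref{prop:regulator-occupation} gives
\[
\E_\pi\bigl[(\beta-I-Q)h(Q)-QI h'(Q)\bigr]+\E_\pi[Qh(Q)]=0,
\]
that is, $\E_\pi\bigl[I\,(qh)'(Q)\bigr]=\beta\,\E_\pi h(Q)$. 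Now choose $h$ so that $(qh)'=\varphi$:
\[
h(q):=\frac1q\int_0^q\varphi(v)\dd v=\E[\varphi(V_{\mathrm u}q)].
\]
With this choice the identity reads $\E_\pi[I\varphi(Q)]=\beta\,\E[\varphi(V_{\mathrm u}Q_\pi)]$, which is \cref{eq:idleness-weighting}.

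To make this rigorous, I would avoid \Cref{prop:stationary-ito}, since $f$ is unbounded in $i$, and argue pathwise instead. Fix a bounded continuous $\varphi$. Then $h$ is $C^1$ on $(0,\infty)$, bounded by $\|\varphi\|_\infty$, and $qh'(q)=\varphi(q)-h(q)$ is bounded. Since $Q$ is continuous and of finite variation (\Cref{prop:well-posedness}), the product rule gives
\[
I_th(Q_t)-I_0h(Q_0)=\int_0^t h(Q_{s})\dd I_s+\int_0^t I_{s}h'(Q_s)\dd Q_s .
\]
There is no bracket term because $Q$ is continuous with finite variation. Substituting \cref{eq:reflected-sde} produces several terms:
\begin{itemize}
\item the stable integral $\int_0^t h(Q_{s-})\dd\Stable(s)$;
\item the drift $\int_0^t h(Q_s)(\beta-I_s-Q_s)\dd s$;
\item the term $\int_0^t h(Q_s)\dd U_s$;
\item the term $-\int_0^t I_sQ_sh'(Q_s)\dd s$;
\item the term $\int_0^t I_sh'(Q_s)\dd U_s$, which vanishes by \cref{eq:regulator-condition}.
\end{itemize}
The $q$-derivative $h'$ may be singular at $0$, but only the combination $qh'$ enters the $\dd s$-term. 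The term $I_sh'(Q_s)\dd U_s$ disappears because $I=0$ on the support of $\dd U$. For safety I would first mollify $\varphi$ near zero, or take $\varphi$ smooth, and remove this at the end by dominated convergence.

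Taking $\E_\pi$ then uses three facts. First, $\E_\pi I<\infty$ and $\E_\pi Q<\infty$ (\Cref{lem:stationary-moment-bounds}), together with stationarity, make the left side and all $\dd s$-terms integrable. Second, $\E_\pi\int_0^t h(Q_s)\dd U_s=t\E_\pi[Qh(Q)]$ by \Cref{prop:regulator-occupation}. Third, the stochastic integral of the bounded predictable process $h(Q_{s-})$ against the mean-zero L\'evy process $\Stable$ must have zero mean.

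I expect the third fact to be the main technical obstacle. The process $\Stable$ is not square integrable. I would split it into a compensated small-jump part, which is an $L^2$-martingale, and a large-jump part, which is a compensated compound-Poisson process with integrable jumps since $\alpha>1$. Against a bounded predictable integrand, each part is a true martingale: the first by the $L^2$ isometry, the second because its expected total variation is finite. Hence the integral has mean zero. Dividing the result by $t$ yields the formal identity above.

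For the extension, the class of $\varphi$ for which \cref{eq:idleness-weighting} holds is closed under bounded pointwise limits, by dominated convergence on both sides, since $\E_\pi I<\infty$. A monotone class argument therefore extends \cref{eq:idleness-weighting} to bounded Borel $\varphi$, and monotone convergence extends it to nonnegative measurable $\varphi$.

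Finally, take $\varphi(q)=q^r$. Since $\E[V_{\mathrm u}^r]=1/(r+1)$, this gives $\E_\pi[IQ^r]=\beta\,\E_\pi Q^r/(r+1)$. For $r=1$ this reads $\E_\pi[IQ]=\beta m_Q/2$. Combined with $\E_\pi I=\beta$, it yields $\operatorname{Cov}_\pi(I,Q)=\beta m_Q/2-\beta m_Q=-\beta m_Q/2$, which is negative because $m_Q>0$.
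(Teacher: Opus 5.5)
Your proposal is correct and follows essentially the same route as the paper: the product rule for $I_t h(Q_t)$ with $h(q)=q^{-1}\int_0^q\varphi(v)\dd v$ (taking $\varphi$ smooth), the vanishing of $I_t h'(Q_t)\dd U_t$ on the face, the regulator-occupation identity for the $h(Q)\dd U$ term, a small-jump/large-jump split to show the stable integral has mean zero, and a measure-extension argument. Your only addition is the heuristic derivation through the stationary It\^o identity with $f(i,q)=ih(q)$ and $\jumpgen f=0$, which the paper does not state but which explains why the identity takes this form.
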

\begin{proof}
Take $h\in C^1(\R_+)$ with $h$ and $qh'(q)$ bounded.
The product rule, continuity and finite variation of $Q$, and $I=0$ on the support of $\dd U$ give
\[
\begin{aligned}
\dd(I_t h(Q_t))={}&h(Q_t)\dd\Stable(t)+h(Q_t)\dd U_t+\bigl[\beta h(Q_t)-I_t h(Q_t)-Q_t h(Q_t)-I_tQ_t h'(Q_t)\bigr]\dd t.
\end{aligned}
\]
The stable stochastic integral has mean zero: its integrand is bounded, the small-jump part is square-integrable, and the large jumps have finite first moment.
Because $h$ and $qh'(q)$ are bounded, the endpoint terms and $\dd t$ integrals are integrable by $\E_\pi I=\beta$ and $\E_\pi Q=m_Q<\infty$ (\Cref{prop:regulator-occupation}).
The regulator term has expected value at most $\|h\|_\infty\E_\pi U_1=\|h\|_\infty m_Q$.
Taking expectations cancels the terms involving $Qh(Q)$ and gives
\begin{equation}\label{eq:idleness-test-identity}
\E_\pi[I\{h(Q)+Qh'(Q)\}]=\beta\E_\pi h(Q).
\end{equation}
For a bounded smooth $\varphi$, choose
\[
h(q)=\frac1q\int_0^q\varphi(v)\dd v\quad(q>0),
\qquad h(0)=\varphi(0).
\]
Then $h+qh'=\varphi$ and $h(q)=\E\varphi(V_{\mathrm u}q)$, so \cref{eq:idleness-test-identity} proves \cref{eq:idleness-weighting} on this class.
Equality of finite measures extends it to bounded Borel functions, and monotone convergence extends it to nonnegative functions.
For $\varphi(q)=q^r$, both $\E_\pi Q^r$ and $\E_\pi[IQ^r]$ are finite by \Cref{lem:stationary-moment-bounds} with $p=1$, using $q^r\le C_{r,\theta_0}\e^{\theta_0q}$ for a fixed $\theta_0\in(0,\theta_c)$.
This yields \cref{eq:mixed-idleness-moments}; the case $r=1$ and $\E_\pi I=\beta$ give the covariance.\hfill
\end{proof}

The idleness-weighted queue mean is $\E[V_{\mathrm u}Q_\pi]=m_Q/2$, half the unweighted mean.
Positive idleness keeps the regulator inactive while the queue decays, so weighting observations by idle capacity favors smaller queue values.
These identities use the linear state equations, continuous reflection, stationary first moments, and the zero mean of the input integral.
In Brownian JSQ, the integer moment identities of Banerjee and Mukherjee \cite{BanerjeeMukherjee2020} determine the idleness-weighted law, since the queue has a finite positive exponential moment \cite{BanerjeeMukherjee2019}.

Conditional on $Q_\pi=v>0$, the product $V_{\mathrm u}Q_\pi$ is uniform on $(0,v)$.
We use the representation $V_{\mathrm u}Q_\pi$ to obtain an almost-everywhere positive queue density and identify conditional mean idleness.
We then prove joint absolute continuity by comparing the stationary process with its reflection-free evolution over an interval whose length tends to zero.

\begin{proposition}[Absolute continuity and conditional mean idleness]\label[proposition]{prop:joint-stationary-density}
The invariant law $\pi$ has a density with respect to two-dimensional Lebesgue measure on $E$.
Its queue marginal has a density $g_Q$ such that, for Lebesgue-almost every $q>0$, $0<g_Q(q)<\infty$ and
\begin{equation}\label{eq:conditional-idleness-mean}
\E_\pi[I\mid Q=q]
=\frac{\beta}{g_Q(q)}\int_q^\infty\frac{g_Q(v)}v\dd v.
\end{equation}
\end{proposition}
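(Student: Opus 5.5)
The plan is to derive the queue density from \Cref{thm:idleness-weighting}, then lift it to a joint density by a short reflection-free comparison.

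\textbf{Queue marginal.} Put
\[
k(q):=\int_{(q,\infty)}\frac{\pi_Q(\dd v)}v,\qquad q>0 .
\]
Since $V_{\mathrm u}Q_\pi$ is uniform on $(0,v)$ given $Q_\pi=v$, identity \cref{eq:idleness-weighting} says that
\[
\E_\pi[I\1_B(Q)]=\beta\int_B k(q)\dd q,\qquad B\in\mathcal B(\R_+).
\]
By \Cref{prop:regulator-occupation}, $\pi_Q((x,\infty))>0$ for every $x$, so $k(q)>0$ for every $q>0$. It is finite for $q>0$, and also $\int_0^\infty k=1$ since $\E[1/Q_\pi\cdot Q_\pi]=1$. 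Consequently, on the event $\{I>0\}$ the law of $Q$ is absolutely continuous.

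To conclude that $\pi_Q$ itself is absolutely continuous, I need $\Pp_\pi(I=0)=0$. I would prove this pathwise. On $[t-h,t]$, $I$ is the one-dimensional Skorokhod reflection of a process of the form $\Stable+{}$(continuous finite-variation drift). Hence $I_t=0$ forces time $t$ to be a time at which the running infimum of $\Stable(\cdot)-\Stable(t-h)+{}$(Lipschitz perturbation) over $[t-h,t]$ is attained at the endpoint. The stable process has unbounded variation, so $0$ is regular for $(-\infty,0)$. For the reversed increments this shows that a fixed time is a.s.\ not such a time, even after a Lipschitz drift perturbation (Girsanov is not needed; regularity downward absorbs linear drift).

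I expect this step to be the main obstacle. If the direct argument is delicate, I would fall back on a comparison with the unreflected process on $[t-h,t]$ together with the density of $\Stable(h)$.

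Once $\Pp_\pi(I=0)=0$, a Lebesgue-null $B$ gives $\E_\pi[I\1_B(Q)]=0$, hence $Q\notin B$ a.s., so $\pi_Q$ has a density $g_Q$. Moreover $\E_\pi[I\mid Q=q]g_Q(q)=\beta k(q)$ a.e., which is \cref{eq:conditional-idleness-mean}. If $g_Q=0$ on a set $B$ of positive measure, the left side $\E_\pi[I\1_B(Q)]$ would vanish while the right side $\beta\int_B k>0$, a contradiction. Hence $0<g_Q<\infty$ a.e.

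\textbf{Joint density.} Start from $Y_0\sim\pi$ and fix small $h,\delta>0$. On the event $G_{h,\delta}=\{I_0>\delta,\ U_h=0\}$ the dynamics are reflection-free on $[0,h]$. There $Q_h=\e^{-h}Q_0$, and solving the linear $I$-equation gives
\[
I_h=a_h(I_0,Q_0)+\int_0^h\e^{-(h-s)}\dd\Stable(s),
\]
with $a_h$ affine. The stable stochastic convolution is independent of $Y_0$ and has a bounded density (it is $\alpha$-stable with positive scale). Since $Q_0$ has a density, the pair $(I_h,Q_h)$, restricted to the larger event where we ignore whether $U_h=0$ but use the unreflected formula, has a two-dimensional density. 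Concretely, the linear map $(x,q)\mapsto(a_h(x,q)+w,\e^{-h}q)$ pushes a measure of the form (density in $q$)$\times$(density in $w$)$\times$(anything in $x$ given $q$) to an absolutely continuous measure.

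For a Lebesgue-null set $N\subset E$, stationarity then gives
\[
\pi(N)=\Pp_\pi(Y_h\in N)\le\Pp_\pi(G_{h,\delta}^c)\le\Pp_\pi(I_0\le\delta)+\Pp_\pi(I_0>\delta,\ U_h>0).
\]
The second term tends to $0$ as $h\downarrow0$ for fixed $\delta$, since reaching $0$ from above $\delta$ in time $h$ needs $\inf_{s\le h}\Stable(s)\lesssim-\delta/2$, given moderate $Q_0$, and $Q_0$ is tight. The first term tends to $\Pp_\pi(I=0)=0$ as $\delta\downarrow0$. Letting $h\downarrow0$ and then $\delta\downarrow0$ gives $\pi(N)=0$, so $\pi$ is absolutely continuous with respect to two-dimensional Lebesgue measure on $E$.
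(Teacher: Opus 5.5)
\paragraph{The gap: an atom of $\pi_Q$ at zero.}
Nothing in your argument excludes an atom of $\pi_Q$ at $q=0$. Two of your claims silently assume $Q_\pi>0$ almost surely: the formula $\E_\pi[I\1_B(Q)]=\beta\int_B k(q)\dd q$ for all $B\in\mathcal B(\R_+)$, and $\int_0^\infty k=1$.

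Suppose $p_0:=\pi_Q(\{0\})>0$. Then $V_{\mathrm u}Q_\pi$ has an atom of mass $p_0$ at zero, and the correct identity is $\E_\pi[I\1_B(Q)]=\beta p_0\1_B(0)+\beta\int_B k(q)\dd q$. Also $\int_0^\infty k=\pi_Q((0,\infty))=1-p_0$. This corrected identity is consistent with $p_0>0$, since for $B=\{0\}$ it only says $\E_\pi[I;Q=0]=\beta p_0$. Knowing $\Pp_\pi(I=0)=0$ does not exclude it either. So you have only shown that $\pi_Q$ restricted to $(0,\infty)$ is absolutely continuous.

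Your joint-density step inherits the gap, because it uses that $Q_0$ has a density. An atom at $Q_0=0$ would put $\pi$-mass $p_0$ on the two-dimensional null set $\{q=0\}$.

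The paper closes this separately. By \cref{eq:queue-variation-of-constants}, $Q_t=0$ forces $Q_0=0$. From $(i,0)$, \cref{eq:regulator-cancellation} and $I\ge0$ give $Q_t\ge -i-\Stable(t)-\beta t$, so $\Pp_{(i,0)}(Q_t>0)>0$ for every $i$ by \cref{eq:stable-negative-tail}. Stationarity then gives $\pi_Q(\{0\})=\int_{\{q=0\}}\Pp_{(i,0)}(Q_t=0)\,\pi(\dd i,\dd q)$, which would be strictly less than $\pi_Q(\{0\})$ if that mass were positive. You need this step, or an equivalent, before treating $k$ as the density of the full law of $V_{\mathrm u}Q_\pi$.

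\paragraph{A different route to $\Pp_\pi(I=0)=0$.}
The paper takes $h=1$ in \cref{eq:stationary-product-test}, which gives $\kappa_\alpha s^{\alpha-1}\E_\pi\e^{-sI}\le\beta+m_Q$ and hence $\E_\pi\e^{-sI}\to0$. This takes two lines once the stationary It\^o identity is available, and it already exhibits the order $s^{1-\alpha}$ that later drives \Cref{thm:boundary-asymptotics}.

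Your pathwise argument is also valid and gives more: $\Pp_y(I_t=0)=0$ for every initial law and every fixed $t>0$, without stationarity. It should be stated precisely. With the free path $w(s)=I_0+\Stable(s)+\int_0^s(\beta-I_u-Q_u)\dd u$, the Skorokhod formula shows that $I_t=0$ forces $w(t)=\inf_{s\le t}w(s)$. Hence the reversed increments $r\mapsto\Stable(t)-\Stable(t-r)$, which by time reversal have the law of $\Stable$, stay below $Cr$ on $[0,h]$, where $C<\infty$ is the random Lipschitz constant of the drift integral.

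This is ruled out by $\limsup_{r\downarrow0}\Stable(r)/r=+\infty$ almost surely, which follows from scaling and Blumenthal's zero--one law. Apply it with $C$ replaced by each integer $n$ and take a countable union. This is a statement about the upward direction for $\Stable$, and that is where unbounded variation ($\alpha>1$) enters. Downward regularity holds for any spectrally positive process that is not a subordinator.

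\paragraph{Remaining steps.}
The rest matches the paper's proof: positivity of $g_Q$, the disintegration giving \cref{eq:conditional-idleness-mean}, and the short-time reflection-free comparison with the independent stable convolution for the joint density.
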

\begin{proof}
Taking $h=1$ in the product-test identity \cref{eq:stationary-product-test} gives
\begin{equation}\label{eq:idleness-laplace-balance}
\kappa_\alpha s^{\alpha-1}\E_\pi\e^{-sI}
=\beta\E_\pi\e^{-sI}+m_Q-\E_\pi[(I+Q)\e^{-sI}],\qquad s>0.
\end{equation}
The right side of \cref{eq:idleness-laplace-balance} is at most $\beta+m_Q$, so $\E_\pi\e^{-sI}\le(\beta+m_Q)/(\kappa_\alpha s^{\alpha-1})\to0$ as $s\to\infty$.
Hence $\Pp_\pi(I=0)=0$.
The proof of \Cref{prop:regulator-occupation} gives $\Pp_{(i,0)}(Q_t>0)>0$ for every $i\ge0$ and $t>0$.
By \cref{eq:queue-variation-of-constants}, $Q_t=0$ requires $Q_0=0$.
Invariance therefore rules out positive mass at $Q=0$.

Define the probability measure $\mu(B):=\beta^{-1}\E_\pi[I\1_{\{Q\in B\}}]$ on $(0,\infty)$.
Since $I>0$ almost surely, $\mu$ and $\pi_Q$ have the same null sets.
By \cref{eq:idleness-weighting}, $\mu$ is the law of $V_{\mathrm u}Q_\pi$, with density
\[
q\longmapsto\int_{(q,\infty)}\frac1v\pi_Q(\dd v),\qquad q>0.
\]
This density is at most $1/q$ and is strictly positive because $\pi_Q((q,\infty))>0$ by \Cref{prop:regulator-occupation}.
Thus $\mu$, and hence $\pi_Q$, is equivalent to Lebesgue measure on $(0,\infty)$.
Disintegration of \cref{eq:idleness-weighting} gives
\[
\E_\pi[I\mid Q=q]g_Q(q)=\beta\int_{(q,\infty)}\frac1v\pi_Q(\dd v)
\]
for Lebesgue-almost every $q>0$.
Substituting $\pi_Q(\dd v)=g_Q(v)\dd v$ and dividing by $g_Q(q)>0$ proves \cref{eq:conditional-idleness-mean}.

For joint absolute continuity, fix $t>0$ and consider the time-$t$ state without the regulator term:
\[
\left(\e^{-t}I_0+\beta(1-\e^{-t})-t\e^{-t}Q_0+Z_t,
\ \e^{-t}Q_0\right),
\qquad
Z_t:=\int_0^t\e^{-(t-s)}\dd\Stable(s).
\]
By \cref{eq:stable-integral-scaling}, $Z_t\overset d=[(1-\e^{-\alpha t})/\alpha]^{1/\alpha}\Stable(1)$, so \Cref{lem:stable-convolution-tail} gives its density for $t>0$.
Independence from the initial state follows from independence of future input increments.
The second coordinate has a density because $Q_0\sim\pi_Q$, which is absolutely continuous by the preceding argument.
Conditionally on $Q_0$, convolution with the independent density of $Z_t$ makes the first coordinate absolutely continuous, so the displayed vector has a joint density.
On $\{U_t=0\}$ it equals $Y_t$.
For any two-dimensional Lebesgue-null set $B$, stationarity gives $\pi(B)\le\Pp_\pi(U_t>0)$.
Since $I_0>0$ almost surely, right continuity gives a pathwise positive initial interval on which $I>0$ and $U=0$.
Thus $\Pp_\pi(U_t>0)\to0$ as $t\downarrow0$, proving joint absolute continuity.\hfill
\end{proof}

\subsection{Reconstruction of the joint Laplace transform}\label{sec:joint-transform}

For $s,r\ge0$, define
\[
F(s,r):=\E_\pi\e^{-sI-rQ},
\qquad
G(r):=F(0,r)=\int_0^\infty\e^{-rq}g_Q(q)\dd q.
\]
The exponential factors are bounded by one.
The drift terms involving $I$ and $Q$ can be expressed through $-F_s$ and $-F_r$, while \cref{eq:regulator-occupation} expresses the boundary term through $G'$.
This gives a first-order partial differential equation for $F$ whose inhomogeneous term is determined by the queue transform $G$.

For $s>0$ and $r\ge0$, the joint transform satisfies
\begin{equation}\label{eq:joint-transform-equation}
sF_s(s,r)+(s+r)F_r(s,r)
=\Phi(s)F(s,r)+(s+r)G'(r),
\end{equation}
where $\Phi(s)=\kappa_\alpha s^\alpha-\beta s$ and derivatives at $r=0$ are taken from the right.
Take $h(q)=\e^{-rq}$ in \cref{eq:stationary-product-test}.
Since $\E_\pi(I+Q)<\infty$, differentiation under the expectation gives $F_s=-\E_\pi[I\e^{-sI-rQ}]$, $F_r=-\E_\pi[Q\e^{-sI-rQ}]$, and $G'=-\E_\pi[Q\e^{-rQ}]$.
Substitution gives \cref{eq:joint-transform-equation}.

For a given $G$, \cref{eq:joint-transform-equation} is an ordinary differential equation along its characteristic curves.
For $r\ge0$, boundedness of $F$ determines the integration constant.
To integrate the equation, set
\[
\Xi(u):=\frac{\kappa_\alpha}{\alpha}u^\alpha-\beta u,
\qquad
\rho_{s,r}(u):=u\left(\frac rs+\log\frac us\right),\quad u\ge s>0.
\]
Then $\rho_{s,r}'(u)=1+\rho_{s,r}(u)/u$ and $\Xi'(u)=\Phi(u)/u$.

\begin{theorem}[Joint Laplace transform from the queue marginal]\label[theorem]{thm:joint-transform-reconstruction}
For $s>0$ and $r\ge0$,
\begin{equation}\label{eq:joint-transform-reconstruction}
F(s,r)=-\e^{\Xi(s)}\int_s^\infty
\e^{-\Xi(u)}\rho_{s,r}'(u)G'(\rho_{s,r}(u))\dd u.
\end{equation}
Thus the queue marginal determines the joint stationary law.
\end{theorem}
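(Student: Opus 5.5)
We solve the first-order PDE \cref{eq:joint-transform-equation} by the method of characteristics, using boundedness of $F$ to remove the free constant.

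\emph{Characteristic curves.} Fix $s>0$ and $r\ge0$, and parametrize the characteristic through $(s,r)$ by $u\ge s$, setting $\sigma(u)=u$ and $R(u)=\rho_{s,r}(u)$. The ODE $\dd R/\dd u=(u+R)/u$ has exactly this solution with $R(s)=r$, because $\rho_{s,r}'(u)=1+\rho_{s,r}(u)/u$. For $u\ge s$ we have $\log(u/s)\ge0$, so $\rho_{s,r}(u)\ge r\ge0$ and the curve stays in the domain $\{s>0,r\ge0\}$ where \cref{eq:joint-transform-equation} holds.

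\emph{ODE along the curve.} Put $\phi(u):=F(u,\rho_{s,r}(u))$. The chain rule and \cref{eq:joint-transform-equation}, evaluated at $(u,\rho_{s,r}(u))$ and divided by $u$, give
\[
\phi'(u)=F_s+\rho_{s,r}'(u)F_r=\frac{\Phi(u)}{u}\phi(u)+\rho_{s,r}'(u)G'(\rho_{s,r}(u)).
\]
This step needs $F$ to be $C^1$ in $(s,r)$. We get this from dominated differentiation, since $\E_\pi(I+Q)<\infty$ and the derivatives $-\E_\pi[I\e^{-sI-rQ}]$ and $-\E_\pi[Q\e^{-sI-rQ}]$ are jointly continuous. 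At $r=0$ the one-sided derivative suffices, because $\rho_{s,0}(u)>0$ for $u>s$.

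\emph{Integration.} Since $\Xi'=\Phi/u$, we have $(\e^{-\Xi}\phi)'=\e^{-\Xi(u)}\rho_{s,r}'(u)G'(\rho_{s,r}(u))$. Integrating from $s$ to $L$ gives
\[
\e^{-\Xi(L)}\phi(L)-\e^{-\Xi(s)}F(s,r)=\int_s^L\e^{-\Xi(u)}\rho_{s,r}'(u)G'(\rho_{s,r}(u))\dd u .
\]
Then let $L\to\infty$. Because $\alpha>1$, $\Xi(L)\to\infty$, and $0\le\phi\le1$, so the boundary term vanishes. The integral converges absolutely: $|G'(\rho)|\le m_Q$ and $\rho_{s,r}'(u)=1+r/s+\log(u/s)$ grows only logarithmically, while $\e^{-\Xi(u)}$ decays like $\exp(-\kappa_\alpha u^\alpha/\alpha)$. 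Rearranging yields \cref{eq:joint-transform-reconstruction}.

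\emph{Determination of the joint law.} $G$ is the Laplace transform of $\pi_Q$, so the formula shows that $\pi_Q$ determines $F$ on $(0,\infty)\times[0,\infty)$. By continuity $F$ is then also determined at $s=0$, where $F(0,r)=G(r)$. Uniqueness of the bivariate Laplace transform of a law on $\R_+^2$ then identifies $\pi$.

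\emph{Main obstacle.} The delicate point is justifying the differentiability of $F$ and the chain-rule computation along the curves near the edge $r=0$. The rest of the argument is routine, and the vanishing of the boundary term at infinity is immediate from boundedness of $F$.
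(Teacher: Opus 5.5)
Your proposal is correct and is essentially the paper's proof: you integrate \cref{eq:joint-transform-equation} along the characteristic $u\mapsto(u,\rho_{s,r}(u))$ with integrating factor $\e^{-\Xi(u)}$, remove the boundary term at infinity using $0\le F\le1$ and $\Xi(u)\to\infty$, and get absolute convergence from $|G'|\le m_Q$. Your additions make explicit what the paper leaves implicit: the $C^1$ regularity of $F$ from $\E_\pi(I+Q)<\infty$, the handling of the edge $r=0$, and uniqueness of the bivariate Laplace transform for the final assertion.
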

\begin{proof}
Along the characteristic, \cref{eq:joint-transform-equation} gives
\[
\frac{\dd}{\dd u}\{\e^{-\Xi(u)}F(u,\rho_{s,r}(u))\}
=\e^{-\Xi(u)}\rho_{s,r}'(u)G'(\rho_{s,r}(u)).
\]
For $u\ge s$ and $r\ge0$, $\rho_{s,r}(u)\ge0$, so $0\le F(u,\rho_{s,r}(u))\le1$.
As $u\to\infty$, $\Xi(u)\to\infty$ and $\e^{-\Xi(u)}F(u,\rho_{s,r}(u))\to0$.
Integration proves \cref{eq:joint-transform-reconstruction}.
The integral is absolutely convergent because $|G'(v)|\le m_Q$ for $v\ge0$ and $\e^{-\Xi(u)}$ decays faster than exponentially.\hfill
\end{proof}

Setting $r=0$ gives the idleness marginal transform $F(s,0)=\E_\pi\e^{-sI}$.
\Cref{app:conditional-idleness-transform} derives the conditional Laplace transform of idleness given the queue.

\subsection{Pathwise construction and parameter dependence}\label{sec:stationary-representation}

Having recovered the joint law from the queue marginal, we now construct a stationary state directly from the stable input.
Multiplying $Q_t$ by $\e^t$ and setting $x=\e^t$ removes the linear decay and leads to the recursion in \Cref{thm:stationary-representation}.
\begin{proof}[Proof of \Cref{thm:stationary-representation}]
\textit{Stationary upper bound.} Let $Y=(I,Q)$ be any two-sided stationary solution driven by a two-sided extension of $\Stable$, such that $Y_s$ is independent of the input increments after $s$ for every $s\in\R$.
Invariance of $\pi$ and pathwise uniqueness make the joint laws of the solution and input increments, initialized with $\pi$ at each finite left endpoint, consistent under restriction; their extension gives the required two-sided stationary solution.
Its marginal is $\pi$ by uniqueness of the invariant law, and the queue equation determines its regulator increments.
For $x>0$, put
\[
\overline R(x):=xQ_{\log x},
\qquad
\overline J(x):=\int_0^x\frac{\overline R(v)}v\dd v.
\]
The queue equation gives $\dd\overline R(x)=x\dd U_{\log x}$, so $\overline R$ is continuous and nondecreasing.
Stationarity and Tonelli's theorem give $\E\overline R(x)=\E\overline J(x)=xm_Q$.
In particular, $\overline R(x)\to0$ almost surely as $x\downarrow0$, so we extend $\overline R$ continuously by setting $\overline R(0)=0$; also $\overline J(x)<\infty$.

To realize the input in \Cref{thm:stationary-representation}, define $A(x):=xI_{\log x}-\overline R(x)+\overline J(x)$ for $x>0$.
The product rule gives $\dd A(x)=\beta\dd x+x\dd\Stable(\log x)$.
As a process in $x$, the stable integral $\int_{-\infty}^{\log x}\e^t\dd\Stable(t)$ has the law of a copy of $\Stable$ evaluated at the deterministic time $x^\alpha/\alpha$, by \cref{eq:stable-integral-scaling}.
The integral tends to zero in $L^1$ as $x\downarrow0$ by its stable scaling; $\E|A(x)|\le x(\beta+2m_Q)$ gives the same limit for $A(x)$.
The common $L^1$ limit at zero makes the integration constant zero.
Set $A(0)=0$ and, for $x>0$, $A(x)=\beta x+\int_{-\infty}^{\log x}\e^t\dd\Stable(t)$.
In this coupling, $A(x)=\beta x+\widehat\Stable(x^\alpha/\alpha)$, where $\widehat\Stable$ has the same law as $\Stable$.
Use this input $A$ in \cref{eq:stationary-recursion}.
The nonnegative coordinate $xI_{\log x}=A(x)+\overline R(x)-\overline J(x)$ and the support of $\dd\overline R$ imply, by the one-sided Skorokhod formula,
\[
\overline R(x)=\sup_{0\le u\le x}[\overline J(u)-A(u)]^+.
\]

\textit{Convergence of the recursion.} Monotonicity in \cref{eq:stationary-recursion} now gives $0\le R_n\le R_{n+1}\le\overline R$ and $0\le J_n\le J_{n+1}\le\overline J$.
Monotone convergence and interchange of the two suprema over $n$ and $u$ yield
\[
J_\infty(x)=\int_0^x\frac{R_\infty(v)}v\dd v,
\qquad
R_\infty(x)=\sup_{0\le u\le x}[J_\infty(u)-A(u)]^+.
\]
The integral defines a continuous $J_\infty$, including at zero, by domination by $\overline R(v)/v$.
The path $J_\infty-A$ has no positive jumps, so its running supremum is continuous.
The same argument applies at each finite step.
Dini's theorem therefore gives uniform convergence on compact intervals.

\textit{Identification of the invariant law.} Strict stability gives $c^{-1}A(c\,\cdot)\overset d=A(\cdot)$ for every $c>0$.
The recursion commutes with this scaling, so the process $Y^*=(I^*,Q^*)$ defined by
\[
Q_t^*:=\e^{-t}R_\infty(\e^t),
\qquad
I_t^*:=\e^{-t}\{A(\e^t)+R_\infty(\e^t)-J_\infty(\e^t)\}
\]
is stationary under time shifts, as in the Lamperti transformation \cite{Lamperti1962}.
For $t\ge0$, set $U_t^*:=\int_1^{\e^t}v^{-1}\dd R_\infty(v)$.
The identity $\dd A(v)=\beta\dd v+v\dd\Stable(\log v)$ and the product rule give
\[
\dd Q_t^*=-Q_t^*\dd t+\dd U_t^*,
\qquad
\dd I_t^*=(\beta-I_t^*-Q_t^*)\dd t+\dd\Stable(t)+\dd U_t^*.
\]
The initial state depends only on input increments on $(-\infty,0]$ and is therefore independent of future increments of $\Stable$.
The running-supremum condition makes $U^*$ continuous and allows it to increase only when $I^*=0$.
Thus this is a stationary solution of the reflected SDE.
Uniqueness of the invariant law proves \cref{eq:stationary-law-representation}.
The iterates and limits are measurable input functionals, so these almost-sure properties hold for every copy of $\widehat\Stable$.

\textit{Recovery from the input.} For each $x>0$, $R_\infty(x)\le\overline R(x)$ and stationarity gives $\E R_\infty(x)=xm_Q=\E\overline R(x)$, so equality holds almost surely.
Equality at positive rational $x$ and continuity give $R_\infty=\overline R$ and $J_\infty=\overline J$ on $[0,\infty)$.
Thus $xQ_{\log x}=R_\infty(x)$ and $xI_{\log x}=A(x)+R_\infty(x)-J_\infty(x)$ for all $x>0$.
The recursion at $x$ uses only input increments up to time $\log x$, so $Y_0$ is measurable with respect to the input on $(-\infty,0]$.
Any two stationary solutions satisfying the stated independence condition and driven by the same input are therefore indistinguishable. \hfill
\end{proof}

The recursion also shows how spare capacity and input scale affect the stationary queue.
\begin{proposition}[Monotonicity and convexity]\label[proposition]{prop:queue-parameter-dependence}
For fixed $\alpha,c_{\mathrm A}$, the stationary queue is stochastically nonincreasing in $\beta$.
For every nondecreasing convex $\varphi:\R_+\to\R_+$, $\E_\pi[\varphi(Q)]$ is also nonincreasing and convex in $\beta$ on its domain of finiteness.
For fixed $\alpha,\beta$, the stationary queue is stochastically nondecreasing in $c_{\mathrm A}$.
\end{proposition}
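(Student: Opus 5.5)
We plan to prove \Cref{prop:queue-parameter-dependence} by coupling through the recursion of \Cref{thm:stationary-representation}, driving every parameter value with one stable path $\widehat\Stable$. For spare capacity $\beta$, write $A^\beta(x)=\beta x+\widehat\Stable(x^\alpha/\alpha)$, and let $R_n^\beta,J_n^\beta$ be the iterates with limits $R_\infty^\beta$. By \cref{eq:stationary-law-representation}, $Q^\beta:=R_\infty^\beta(1)$ has law $\pi_Q$ at parameter $\beta$. All claims will then follow from pathwise properties of the map $\beta\mapsto R_\infty^\beta(1)$.

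\textbf{Monotonicity in $\beta$.} The recursion map $(J,A)\mapsto\sup_{u\le x}[J(u)-A(u)]^+$ is nondecreasing in $J$ and nonincreasing in $A$, and $J_n$ is nondecreasing in $R_n$. Let $\beta<\beta'$. Then $A^{\beta}\le A^{\beta'}$ pointwise. Induction from $R_0\equiv0$ gives $R_n^{\beta'}\le R_n^\beta$ for every $n$, and passing to the limit gives $Q^{\beta'}\le Q^\beta$ almost surely. This is stochastic monotonicity, and for nondecreasing $\varphi$ it gives $\E\varphi(Q^{\beta})$ nonincreasing in $\beta$.

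\textbf{Dependence on $c_{\mathrm A}$.} For fixed $\alpha,\beta$, changing $c_{\mathrm A}$ rescales the input: $\widehat\Stable$ at scale $c_{\mathrm A}$ has the law of $c_{\mathrm A}^{1/\alpha}$ times a unit-scale path $\Stable_1$. Alternatively, self-similarity lets us write $\widehat\Stable(x^\alpha/\alpha)$ as $\lambda\Stable_1(x^\alpha/\alpha)$ with $\lambda=(c_{\mathrm A}/c_0)^{1/\alpha}$. Dividing the recursion by $\lambda$ shows that $R^{c_{\mathrm A}}_\infty=\lambda\widetilde R_\infty$, where $\widetilde R_\infty$ is the fixed point with input $(\beta/\lambda)x+\Stable_1(x^\alpha/\alpha)$. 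Increasing $c_{\mathrm A}$ increases $\lambda$ and lowers the effective drift $\beta/\lambda$. By the previous step, $\widetilde R_\infty$ is then pathwise nondecreasing. Multiplication by the larger $\lambda\ge0$ keeps it nondecreasing because $\widetilde R_\infty\ge0$.

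\textbf{Convexity in $\beta$.} This is the main obstacle. It requires pathwise convexity of $\beta\mapsto R_\infty^\beta(1)$. Given that, composition with a nondecreasing convex $\varphi$ is convex, and taking expectations preserves convexity wherever the expectations are finite. We will show by induction that for each fixed $x$, both $\beta\mapsto R_n^\beta(x)$ and $\beta\mapsto J_n^\beta(x)$ are convex and nonincreasing.
\begin{itemize}
\item The base case $R_0\equiv0$ is trivial.
\item If $R_n^\beta(v)$ is convex in $\beta$ for every $v$, then $J_n^\beta(u)=\int_0^u R_n^\beta(v)v^{-1}\dd v$ is convex, as a positive integral of convex functions.
\item Since $A^\beta(u)$ is affine in $\beta$, $J_n^\beta(u)-A^\beta(u)$ is convex in $\beta$.
\item Taking the positive part (the maximum with the constant $0$) and the supremum over $u\in[0,x]$ preserves convexity, so $R_{n+1}^\beta(x)$ is convex.
\end{itemize}
Pointwise limits of convex functions are convex, so $R_\infty^\beta(1)$ is convex in $\beta$.

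The delicate point is that the fixed point is reached through a monotone limit. We will check that the limit $R_\infty$ is indeed the object identified with $\pi$ for every $\beta$ simultaneously on a single probability space. This holds because \Cref{thm:stationary-representation} asserts almost sure convergence for every copy of the input, and we need it only for countably many $\beta$ plus monotonicity. Convexity on rationals together with monotonicity then extends to the full parameter range on the domain of finiteness of $\E\varphi(Q)$.
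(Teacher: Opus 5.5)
Your proposal is correct and follows essentially the paper's route. You drive the recursion of \Cref{thm:stationary-representation} with a common stable path, show by induction that the iterates are nonincreasing and convex in the drift parameter, and get finiteness on one probability-one event from the rational parameters and monotonicity. You then use a scaling step giving $s\bar Q_{\beta/s}\le s'\bar Q_{\beta/s'}$ for the noise-scale ordering. The only cosmetic difference is that you get the $c_{\mathrm A}$ scaling from positive homogeneity of the recursion itself, whereas the paper first normalizes the input through the SDE scaling $(I,Q)\mapsto(I,Q)/s_\alpha$ and uniqueness of the invariant law.
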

\begin{proof}
Fix $\alpha$ and let $s_\alpha:=\kappa_\alpha^{1/\alpha}$ be the noise-amplitude scale.
Let $\bar Q_b$ denote the stationary queue with spare-capacity parameter $b>0$ and normalized input Laplace exponent $\theta^\alpha$, and write $\bar m_\alpha(b):=\E\bar Q_b$.
Dividing both state coordinates and the regulator by $c>0$ changes the parameters to $(\beta/c,c_{\mathrm A}c^{-\alpha})$.
Taking $c=s_\alpha$ shows that, for $(I,Q)\sim\pi$, the vector $(I/s_\alpha,Q/s_\alpha)$ has the invariant law with spare-capacity parameter $b=\beta/s_\alpha$ and input Laplace exponent $\theta^\alpha$.
Uniqueness of the invariant law then gives
\[
Q\overset d=s_\alpha\bar Q_b,
\qquad
m_Q(\alpha,\beta,c_{\mathrm A})=s_\alpha\bar m_\alpha(b),
\qquad
\theta_c=s_\alpha^{-1}b^{1/(\alpha-1)}.
\]
Run \cref{eq:stationary-recursion} with one common normalized stable path and spare-capacity parameter $b$, denoting the iterates by $R_n^b,J_n^b$.
Inductively, for each $x$, both iterates are nonincreasing and convex in $b$: integration against $\dd v/v$, subtraction of the linear term $bu$, positive parts, and suprema preserve these properties.
On the probability-one event where the recursion is finite for every positive rational $b$, monotonicity gives finiteness for all $b>0$.
The inequalities pass to the limits, so $\bar Q_b:=R_\infty^b(1)$ couples the normalized stationary queue laws in decreasing order by \Cref{thm:stationary-representation}.
Composition with a nondecreasing convex $\varphi$ preserves nonincrease and convexity.
Taking expectations and applying the scaling identity with $\varphi(s_\alpha\,\cdot)$ proves the stated dependence on $\beta$ for fixed $\alpha,c_{\mathrm A}$.
For $0<s<s'$, the same coupling gives $s\bar Q_{\beta/s}\le s\bar Q_{\beta/s'}\le s'\bar Q_{\beta/s'}$ almost surely.
Since $s_\alpha$ increases with $c_{\mathrm A}$, the scaling identity proves the noise-scale ordering. \hfill
\end{proof}
For fixed $\alpha$, the scaling relation $m_Q=s_\alpha\bar m_\alpha(\beta/s_\alpha)$, where $s_\alpha=\kappa_\alpha^{1/\alpha}$ and $\bar m_\alpha$ is the mean queue under normalized input, expresses $m_Q$ as the perspective of a convex function.
Hence $m_Q$ is jointly convex in $(\beta,s_\alpha)$ and positively homogeneous of degree one.

\section{Tail and boundary asymptotics}\label{sec:tails}

We study the stationary regimes of large idleness, large queues, and small idleness, and prove the tail, moment, and boundary results stated in \Cref{thm:marginal-tails,thm:invariant-moments,thm:boundary-asymptotics}.
The stationary convolution gives the power-law idleness tail, a regenerative decomposition gives the sharp queue tail, and the regulator identity gives the boundary limit.
These asymptotics yield the exact moment domains and the corresponding conditional laws.

\subsection{The idleness tail}\label{sec:idleness-tail}

We work with the two-sided stationary solution constructed in \Cref{sec:stationary-representation}.
Set
\[
Z:=\int_{-\infty}^0\e^s\dd\Stable(s),
\qquad
J:=\int_{-\infty}^0\e^s Q_s\dd s=\overline J(1).
\]
Using $A(1)=I_0-Q_0+J=\beta+Z$ and the stable scaling identity \cref{eq:stable-integral-scaling}, we obtain
\begin{equation}\label{eq:stationary-convolution}
I_0=Z+\beta+Q_0-J,
\qquad
I_0-Q_0=Z+\beta-J,
\qquad
Z\overset d=\alpha^{-1/\alpha}\Stable(1).
\end{equation}
For $0\le\theta<\theta_c$, \Cref{lem:stationary-moment-bounds} gives $\E_\pi\e^{\theta Q}<\infty$.
Jensen's inequality for the probability measure $\e^s\dd s$ on $(-\infty,0]$ and stationarity give
\begin{equation}\label{eq:stationary-queue-convolution-bound}
\E_\pi\e^{\theta J}
\le\int_{-\infty}^0\e^s\E_\pi\e^{\theta Q_s}\dd s
=\E_\pi\e^{\theta Q}<\infty.
\end{equation}

The random variable $Z$ in \cref{eq:stationary-convolution} has the classical stationary stable OU law \cite{Sato1999}; its tail follows from regular variation of infinitely divisible laws \cite{HultLindskog2006}.
Both remainders $H=\beta+Q_0-J$ and $H=\beta-J$ satisfy $\E_\pi\e^{\lambda_0|H|}<\infty$ for some $\lambda_0>0$.
Upper and lower bounds transfer $Z$'s tail equivalent to $I_0$ and $I_0-Q_0$, without requiring independence.

\begin{proof}[Proof of \cref{eq:idleness-tail} in \Cref{thm:marginal-tails}]
By \Cref{lem:stable-convolution-tail} and \cref{eq:stationary-convolution}, $\lim_{x\to\infty}x^\alpha\Pp(Z>x)=\frac{\cA}{\alpha^2}$.
Choose $\lambda_0>0$ with $2\lambda_0<\theta_c$.
The moment bound \cref{eq:stationary-queue-convolution-bound} and Cauchy--Schwarz inequality give
\[
\E_\pi\e^{\lambda_0|\beta+Q_0-J|}
\le\e^{\lambda_0\beta}
 (\E_\pi\e^{2\lambda_0 Q_0})^{1/2}
 (\E_\pi\e^{2\lambda_0 J})^{1/2}<\infty.
\]
The same $\lambda_0$ gives $\E_\pi\e^{\lambda_0|\beta-J|}<\infty$.
For either remainder $H=\beta+Q_0-J$ or $H=\beta-J$, bound $Z+H$ as follows.
For $0<\varepsilon<1$,
\[
\Pp(Z>(1+\varepsilon)x)-\Pp(|H|>\varepsilon x)
\le\Pp(Z+H>x)
\le\Pp(Z>(1-\varepsilon)x)+\Pp(|H|>\varepsilon x).
\]
The remainder probability is exponentially small.
Multiply by $x^\alpha$, let $x\to\infty$, and then let $\varepsilon\downarrow0$.
Both limits are $\cA/\alpha^2$. \hfill
\end{proof}
Integrating the jump-tail coefficient over the exponential decay of past jumps gives $\cA/\alpha^2=\int_0^\infty(\cA/\alpha)\e^{-\alpha s}\dd s$.
For a fixed input scale, this constant is independent of $\beta$: the constant drift and the queue coordinate enter only through exponentially integrable remainders, while unit-rate mean reversion discounts the large jumps.

\subsection{Sharp queue-tail asymptotics}\label{sec:queue-tail-analysis}

\subsubsection{Regenerative reduction}
For $b>0$, let
\[
\tau_b^\uparrow:=\inf\{t\ge0:Q_t\ge b\},\qquad
\tau_b^\downarrow:=\inf\{t\ge0:Q_t\le b\}.
\]
At a first upcrossing of a queue level $b$ from below, the state is $(0,b)$.
To prove this claim, note that $Q$ is continuous and reaches $b$ from below, so the hitting time is predictable.
The L\'evy input has no jump at a predictable stopping time.
If the left limit of idleness were positive, idleness would remain positive just before the hit, the regulator would be inactive, and $Q$ would decrease, a contradiction.
Thus both $I_{\tau_b^\uparrow-}$ and $I_{\tau_b^\uparrow}$ are zero.

Fix $\theta_0\in(0,\theta_c)$ and a corresponding threshold $q_{\mathrm L}$ from \Cref{lem:killed-decay} in \Cref{app:queue-downcrossing}.
The lemma bounds the tail of the downcrossing time and supplies uniform integrable bounds for the occupation probabilities used below.
For $a>\ell>0$ and a solution started at $(0,a)$,  define the cycle time
\[
\mathsf T_{\ell,a}:=\inf\{t>\tau_\ell^\downarrow:Q_t\ge a\}.
\]
For $x>b>\ell$, define $\mathsf p_\ell(b,x):=\Pp_{(0,b)}(\tau_x^\uparrow<\tau_\ell^\downarrow)$, with $\mathsf p_\ell(b,b):=1$, and let
\[
\mathcal O_\ell^Q(x):=\E_{(0,x)}\int_0^{\tau_\ell^\downarrow}\1_{\{Q_t>x\}}\dd t,
\qquad
\mathcal O_\ell^{Q-I}(x):=\E_{(0,x)}\int_0^{\tau_\ell^\downarrow}\1_{\{Q_t-I_t>x\}}\dd t.
\]
\Cref{fig:regenerative-cycle} illustrates a cycle with $\tau_x^\uparrow<\tau_\ell^\downarrow$.

\begin{figure}[hptb]
\centering
\includegraphics[width=\linewidth]{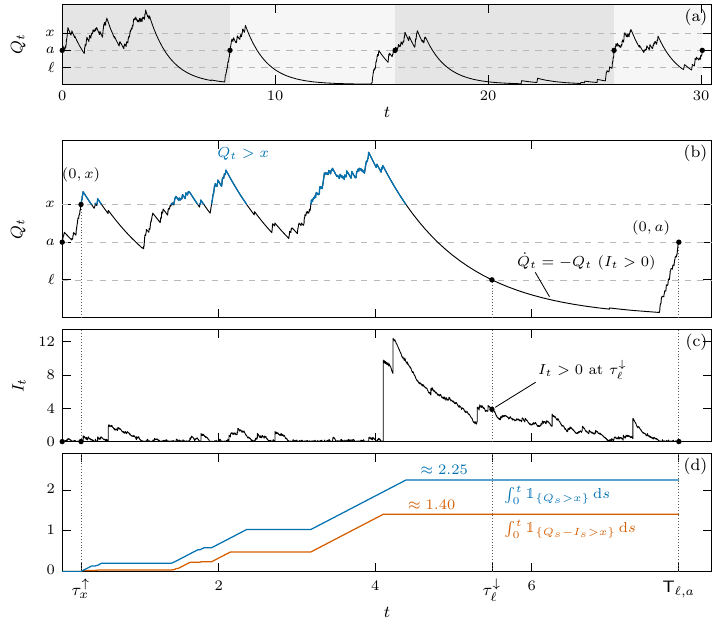}
\caption{Numerical illustration of a regenerative cycle in \Cref{lem:queue-regeneration} for $\alpha=3/2$, $\beta=\cA=1$, and $(\ell,a,x)=(1,2,3)$.
(a) Four independently simulated cycles, selected for display and concatenated at $(0,a)$; shading alternates between cycles, and dots mark the regeneration states $(0,a)$.
(b, c) Queue and idleness along the first cycle; dotted lines mark $\tau_x^\uparrow$, $\tau_\ell^\downarrow$, and $\mathsf T_{\ell,a}$, and the queue is blue where $Q_t>x$.
The cycle starts at $(0,a)$, reaches $(0,x)$ at $\tau_x^\uparrow$, and has positive idleness at $\tau_\ell^\downarrow$, so it closes only at $\mathsf T_{\ell,a}$, where the state is again $(0,a)$.
(d) The occupation times $\int_0^t\1_{\{Q_s>x\}}\dd s$ and $\int_0^t\1_{\{Q_s-I_s>x\}}\dd s$ increase only on $[\tau_x^\uparrow,\tau_\ell^\downarrow]$; their final values are the occupation integrals in the definitions of $\mathcal O_\ell^Q(x)$ and $\mathcal O_\ell^{Q-I}(x)$, evaluated on the path after $\tau_x^\uparrow$.}
\label{fig:regenerative-cycle}
\end{figure}

\begin{lemma}[Regenerative tail representation]\label[lemma]{lem:queue-regeneration}
Let $\ell>q_{\mathrm L}$ satisfy $\Pp_\pi(Q<\ell)>0$, and let $a>\ell$.
Successive returns to $(0,a)$, each after a visit to the lower queue level $\ell$, define independent and identically distributed cycles, with $0<\E_{(0,a)}\mathsf T_{\ell,a}<\infty$.
For $x>a$,
\begin{equation}\label{eq:queue-regenerative-representation}
\Pp_\pi(Q>x)=\frac{\mathsf p_\ell(a,x)\mathcal O_\ell^Q(x)}{\E_{(0,a)}\mathsf T_{\ell,a}},
\qquad
\Pp_\pi(Q-I>x)=\frac{\mathsf p_\ell(a,x)\mathcal O_\ell^{Q-I}(x)}{\E_{(0,a)}\mathsf T_{\ell,a}}.
\end{equation}
\end{lemma}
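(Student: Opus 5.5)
We would prove \Cref{lem:queue-regeneration} with the regenerative ratio formula for positive Harris processes. The plan has three parts: build the cycles from the strong Markov property, show the mean cycle length is finite and positive, and then split the occupation integral over one cycle at the first upcrossing of $x$.

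\emph{Cycle structure.} Start from $(0,a)$. Let $\tau_\ell^\downarrow$ be the first time $Q\le\ell$, and let $\mathsf T_{\ell,a}$ be the next time $Q\ge a$. Because $Q$ is continuous and starts above $\ell$, the level $\ell$ is hit exactly, so $Q_{\tau_\ell^\downarrow}=\ell<a$. The time $\mathsf T_{\ell,a}$ is then a first upcrossing of $a$ from below. By the upcrossing argument given before the lemma (predictability of the hitting time, so the input does not jump there, and the regulator must be active), the state at $\mathsf T_{\ell,a}$ is $(0,a)$. The strong Markov property (\Cref{prop:well-posedness}) at the successive times $\mathsf T^{(k)}$ therefore gives i.i.d.\ cycles, each beginning at the deterministic state $(0,a)$.

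\emph{Finite positive mean cycle length.} Positivity holds because $\mathsf T_{\ell,a}\ge\tau_\ell^\downarrow>0$ by continuity of $Q$. For finiteness, we write $\mathsf T_{\ell,a}=\tau_\ell^\downarrow+\mathsf T'$.
\begin{itemize}
\item The first term has finite mean because $\ell>q_{\mathrm L}$, and \Cref{lem:killed-decay} gives an integrable tail for the downcrossing time from $(0,a)$. This is the point where the choice $\ell>q_{\mathrm L}$ enters.
\item For the second term we would argue through Harris theory. The compact set $K=[0,M]\times[0,\ell]$ has finite mean return times from the Foster--Lyapunov drift in \Cref{prop:foster-lyapunov}, via the Down--Meyn--Tweedie theory already invoked. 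From states in $K$, the event $\{Q_t\ge a\}$ has probability bounded below uniformly at some fixed time, using the lower bound $Q_t\ge q-i-\Stable(t)-\beta t$ and the negative tail \cref{eq:stable-negative-tail}. A geometric-trials argument then bounds $\E\mathsf T'$ by a constant times $\sup_{y\in K}\E_y[\text{return time}]$ plus the contribution of the start point $Y_{\tau_\ell^\downarrow}=(I,\ell)$. That contribution is controlled by $\E V(Y_{\tau_\ell^\downarrow})$, which is finite by the Dynkin identity (\Cref{lem:lyapunov-dynkin}).
\end{itemize}
I expect this finiteness step to be the main obstacle. It needs uniformity over the random idleness at $\tau_\ell^\downarrow$, and the strategy above handles it through the drift of $V$. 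The hypothesis $\Pp_\pi(Q<\ell)>0$ guarantees the cycle is nondegenerate; alternatively, the regenerative identity itself forces $\E\mathsf T_{\ell,a}<\infty$ once positive Harris recurrence shows the cycles recur.

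\emph{Ratio formula and splitting.} Positive Harris recurrence (\Cref{prop:positive-harris-recurrence}) and the renewal-reward theorem give
\[
\pi(B)=\frac{\E_{(0,a)}\int_0^{\mathsf T_{\ell,a}}\1_{\{Y_t\in B\}}\dd t}{\E_{(0,a)}\mathsf T_{\ell,a}}.
\]
Here uniqueness of $\pi$ identifies the cycle-average measure with $\pi$. Take $B=\{Q>x\}$ with $x>a$.
\begin{itemize}
\item After $\tau_\ell^\downarrow$ and before $\mathsf T_{\ell,a}$ we have $Q<a<x$, so the integrand vanishes there.
\item Before $\tau_x^\uparrow$ we have $Q<x$, so the integrand vanishes there too. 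It can be nonzero only on $[\tau_x^\uparrow,\tau_\ell^\downarrow)$, and only on the event $\{\tau_x^\uparrow<\tau_\ell^\downarrow\}$.
\item On that event $Y_{\tau_x^\uparrow}=(0,x)$, again by the upcrossing argument. The strong Markov property at $\tau_x^\uparrow$ then factors the numerator as $\mathsf p_\ell(a,x)\,\mathcal O_\ell^Q(x)$, where $\mathcal O_\ell^Q(x)$ is evaluated from $(0,x)$ with the same $\tau_\ell^\downarrow$.
\end{itemize}
For $B=\{Q-I>x\}$ the argument is the same, since $Q-I\le Q$ and so the integrand again vanishes whenever $Q\le x$. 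This proves both formulas in \cref{eq:queue-regenerative-representation}.
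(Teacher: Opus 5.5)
Your proof is correct. The cycle construction, the identification of the cycle average with $\pi$ by renewal--reward, and the factorization at $\tau_x^\uparrow$ all match the paper. In the factorization, both integrands vanish before the upcrossing of $x$ and on the return from $\ell$ to $a$, and the state at the upcrossing is $(0,x)$.

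The genuine difference is how you prove $\E_{(0,a)}\mathsf T_{\ell,a}<\infty$.

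\textbf{The paper's route.} It argues indirectly, which is the rigorous form of the alternative you mention only in passing. The time $O_1$ spent in $\{Q>a\}$ during a cycle satisfies $\E O_1\le C/\delta$ by \cref{eq:killed-occupation-bound}. If the mean cycle length were infinite, the strong law would force $\sum_{j\le n}O_j/\sum_{j\le n}L_j\to0$. This contradicts the Harris ergodic theorem, which sends the ratio to $\Pp_\pi(Q>a)>0$.

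\textbf{Your route.} You bound the cycle directly instead. The descent to $\ell$ is controlled by \cref{eq:killed-exit-bound}. The return to $a$ is controlled by three ingredients: drift-based hitting times for $[0,M]\times[0,\ell]$, a uniform positive probability of reaching level $a$ from that rectangle, and geometric trials. The random idleness at the downcrossing is handled by $\E V(Y_{\tau_\ell^\downarrow})\le V(0,a)$, which follows from the killed supermartingale in the proof of \Cref{lem:killed-decay}. To make the hitting-time step self-contained, take $V=V_{1,\theta_0}$ and $M\ge i_{\mathrm L}$. Then your rectangle contains the exceptional set of \Cref{prop:foster-lyapunov}, and Dynkin's formula directly gives a mean hitting time of order $V(y)$.

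\textbf{Comparison.} Your route gives an explicit bound on the mean cycle length and does not use the ergodic theorem at this step. In it, the hypothesis $\Pp_\pi(Q<\ell)>0$ plays no role, since \Cref{lem:killed-decay} already forces the downcrossing. The paper's route is shorter, needs no bookkeeping over the idleness at the downcrossing time, and reuses an occupation bound that is needed anyway for \Cref{lem:high-level-occupation}.

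For positivity you use $\tau_\ell^\downarrow>0$ almost surely, whereas the paper uses the deterministic bound $\mathsf T_{\ell,a}\ge\log(a/\ell)$ from \cref{eq:queue-variation-of-constants}; either suffices.
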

\begin{proof}
By \Cref{prop:regulator-occupation}, $\Pp_\pi(Q>a)>0$ for every $a>0$.
Positive Harris recurrence and the ergodic theorem ensure that $Q_t$ visits levels below $\ell$ and above $a$ from every initial state.
Cycle times are finite almost surely; the upcrossing state and strong Markov property make cycles independent and identically distributed.

To check the mean cycle length, let $L_n$ be the successive cycle lengths and $O_n$ the time spent with $Q_t>a$ during the $n$th cycle.
There is no such occupation during the return from $\ell$ to $a$, so \cref{eq:killed-occupation-bound} gives $\E O_1\le C/\delta<\infty$ for the constants $C,\delta$ in that bound.
If $\E L_1=\infty$, applying the strong law to $L_j\wedge m$ and then letting $m\to\infty$ gives $n^{-1}\sum_{j=1}^nL_j\to\infty$.
The strong law for the i.i.d. cycle occupation times $O_j$ would then imply $\sum_{j=1}^nO_j/\sum_{j=1}^nL_j\to0$, contrary to the Harris ergodic theorem, which makes the same ratio converge to $\Pp_\pi(Q>a)>0$.
Hence the mean cycle length is finite.
It is positive because \cref{eq:queue-variation-of-constants} implies $L_1\ge\log(a/\ell)$.

The renewal-reward theorem expresses each stationary probability as its mean cycle occupation divided by $\E L_1$.
Since $Q-I\le Q$, neither event can occur before the first upcrossing of $x$, and neither occurs on the return from $\ell$ to $a$.
The strong Markov property at that upcrossing, whose state is $(0,x)$, factors the mean reward as in \cref{eq:queue-regenerative-representation}. \hfill
\end{proof}

To evaluate the occupation factors, let $X_t:=-\Stable(t)-\beta t$ be the unreflected net-input process.
It is spectrally negative, with $\log\E\e^{\theta X_t}=t\Phi(\theta)$ for $\theta\ge0$ and positive Lundberg root $\theta_c$ \cite{Kyprianou2014}.
Write $\mu_c:=\Phi'(\theta_c)=(\alpha-1)\beta>0$.

\begin{lemma}[Occupation limit after high-level upcrossing]\label[lemma]{lem:high-level-occupation}
For each fixed $\ell>q_{\mathrm L}$,
\begin{equation}\label{eq:high-level-occupation-limit}
\lim_{x\to\infty}\mathcal O_\ell^Q(x)
=\lim_{x\to\infty}\mathcal O_\ell^{Q-I}(x)
=\frac1{\theta_c\mu_c}.
\end{equation}
\end{lemma}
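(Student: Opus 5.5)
From $(0,x)$ with $x$ large, we compare $(Q-x,I)$ with a one-dimensional reflected process and show that the occupation integral converges to the expected total time a drifted reflected process spends above zero. Start $Y$ at $(0,x)$. The regulator-cancellation identity \cref{eq:regulator-cancellation} gives
\[
Q_t-I_t=x+X_t+\int_0^t(I_s-\beta)\dd s+\beta t-\dots
\]
More usefully, over a time window of order one the queue decay $-\int_0^tQ_s\dd s\approx -xt$ dominates. We therefore pass to the natural scale. Since $U$ increases only at $I=0$, and $I$ is pushed by the drift $\beta-I-Q\approx -x$ to stay near zero, after the upcrossing the pair behaves as follows. $I$ stays $O(1/x)$-small while $Q_t\approx x$, and $Q$ moves like $x+\tilde X$, where $\tilde X$ is the net input minus a correction. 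Concretely, the plan is to show that on the event that $I$ stays small, $Q_t-x\approx$ the running infimum functional. When $I\approx0$, the queue equation with $U=Q+\dots$ gives $\dd Q=\dd\Stable+(\beta-Q)\dd t-\dd Q$-type balance. The cleanest form is the identity $\dd(Q-I)=-(\dd\Stable+\beta\dd t)+I\dd t$, which reads
\[
Q_t-I_t=x+X_t+\int_0^tI_s\dd s .
\]

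\textbf{Step 1 (limit object).} We will show that as $x\to\infty$, $\int_0^{\tau_\ell^\downarrow}\1\{Q_t-I_t>x\}\dd t$ converges in distribution to $\int_0^\infty\1\{X_t>0\}\dd t$. Here $X$ is the spectrally negative process started at $0$ with drift $-\beta<0$, since $\E X_1=-\Phi'(0)=\beta\cdot$sign. The term $\int_0^tI_s\dd s$ is negligible over the relevant time horizon. This uses the bound $\int_0^t|I^y_s-I^{\tilde y}_s|\dd s\le\|y-\tilde y\|_1$ from \Cref{prop:initial-nonexpansiveness}, together with the observation that idleness is repeatedly driven back to zero by the drift $-Q\approx-x$. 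After time $t$ the queue has decayed only if $U$ fails to compensate, so by time $T$ fixed we get $\int_0^TI_s\dd s\to0$ in probability. The same holds for $Q$ versus $Q-I$, since $I_t\to0$ in occupation measure. This gives equality of the two limits.

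\textbf{Step 2 (identify the constant).} For a spectrally negative Lévy process with negative mean, the expected time above $0$ starting from $0$ is
\[
\int_0^\infty\Pp(X_t>0)\dd t .
\]
By Wiener--Hopf / fluctuation theory \cite{Kyprianou2014} this equals $\Phi'(0+)$-related quantities. With Laplace exponent $\Phi$ having Lundberg root $\theta_c$, the expected occupation above the starting point is computed via the potential density of $X$, $u(y)=\frac{\e^{\theta_c y}}{\Phi'(\theta_c)}-W(-y)$ for $y\le0$ and $u(y)=\e^{-\theta_c y}/\Phi'(\theta_c)$ for $y>0$. This is the standard resolvent formula at $q=0$ with scale function $W$. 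Integrating over $y>0$ gives $1/(\theta_c\mu_c)$, as claimed. Here the convention $X=-\Stable-\beta t$ drifts to $-\infty$, and "above" means positive values.

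\textbf{Step 3 (uniform integrability).} To upgrade convergence in distribution to convergence of means, we use \Cref{lem:killed-decay}. Its occupation bound \cref{eq:killed-occupation-bound}, uniform in the starting level, provides the exponential tail of the occupation. It also shows the contribution after a large time $T$ is uniformly small, because $\tau_\ell^\downarrow$ has uniformly integrable tails beyond time $T$ after $Q$ has dropped below $x$.

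\textbf{Main obstacle.} The hard part is Step 1: controlling $\int_0^tI_s\dd s$ while $Q$ is near $x$. We will try the following. On $\{I_s>0\}$ the drift of $I$ is at most $\beta-Q_s\le\beta-x/2$. A positive jump of size $z$ therefore returns to zero within time about $z/(x/2)$, contributing about $z^2/x$ to $\int I$. Summing over jumps on $[0,T]$, with a truncation at size $\sqrt x$, yields $\E\int_0^TI_s\dd s\to0$ in probability as $x\to\infty$, while large jumps occur with probability $O(Tx^{-\alpha/2})$.
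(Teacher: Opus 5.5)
Your outline is sound and has the same skeleton as the paper's proof. Both use the exact identity $Q_t-x=X_t+I_t+\int_0^tI_u\dd u$ from regulator cancellation, convergence on a fixed horizon, domination in $t$ by \cref{eq:killed-occupation-bound}, and the zero-resolvent density $\e^{-\theta_c v}/\mu_c$ on $v>0$, which integrates to $1/(\theta_c\mu_c)$.

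The real difference is how idleness is made negligible on a fixed horizon $[0,T]$. The paper does this pathwise in two lines. Since $Q_t\ge x\e^{-T}$ on $[0,T]$, the one-sided Skorokhod formula gives
\[
0\le I_t\le\sup_{0\le u\le t}\{\Stable(t)-\Stable(u)-(x\e^{-T}-\beta)(t-u)\}.
\]
The right side is bounded by $2\sup_{u\le T}|\Stable(u)|$ and tends to $0$ at every continuity point of $\Stable$. Dominated convergence then gives $I_t\to0$ almost surely for each fixed $t$, and $\int_0^TI_u\dd u\to0$, with no moment estimates.

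Your jump-by-jump accounting (a jump of size $z$ costs about $z^2/x$, with truncation at $\sqrt x$) has the right orders of magnitude, but as written it is a heuristic. $\Stable$ has infinite variation, so there is no ``drift between jumps''; excursions overlap; and \Cref{prop:initial-nonexpansiveness} does not bear on this step. To turn it into a proof:
\begin{itemize}
\item start from the same Skorokhod bound;
\item restrict to the event of no jump larger than $\sqrt x$ on $[0,T]$, which has probability $1-O(Tx^{-\alpha/2})$;
\item on that event, dominate the mean of the reflected truncated L\'evy process with drift $-(x\e^{-T}-\beta)$ by its Pollaczek--Khinchine stationary mean $\int_0^{\sqrt x}z^2\nu_{\mathrm A}(\dd z)/[2(x\e^{-T}-\beta)]=O(\e^Tx^{-\alpha/2})$.
\end{itemize}
This buys a rate, which the lemma does not need, at the cost of an extra fluctuation estimate.

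Likewise, proving convergence in distribution of the whole occupation integral and then uniform integrability is a detour. By Tonelli, $\mathcal O_\ell^Q(x)=\int_0^\infty\Pp_{(0,x)}(Q_t>x,\ t<\tau_\ell^\downarrow)\dd t$, so convergence for each fixed $t$ plus the domination $C\e^{-\delta t}$ suffices. For the fixed-$t$ step you should state the two facts the paper uses. First, $\tau_\ell^\downarrow\ge\log(x/\ell)$, so killing is irrelevant on fixed horizons once $x$ is large. Second, $\Pp(X_t=0)=0$, so the indicators converge.

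Several statements need correcting:
\begin{itemize}
\item $\E X_1=\Phi'(0)=-\beta$, not $-\Phi'(0)$.
\item The limit object is the unreflected $X$, not a reflected process, and $Q_t-x$ is close to $X_t$ itself, not to a running-infimum functional.
\item The zero-potential density is $\e^{-\theta_c y}/\mu_c-W(-y)$ for all $y$, so your expression for $y\le0$ has the wrong sign in the exponent. This is harmless here, since only $y>0$ enters.
\item In Step~3, $\tau_\ell^\downarrow$ does not have uniformly integrable tails, since it is at least $\log(x/\ell)\to\infty$. Only the occupation bound \cref{eq:killed-occupation-bound} is needed.
\end{itemize}
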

\begin{proof}
Couple all initial states $(0,x)$ with one stable path.
On $[0,T]$, \cref{eq:queue-variation-of-constants} gives $Q_t^{(0,x)}\ge x\e^{-T}$.
The one-dimensional Skorokhod representation therefore implies
\[
0\le I_t^{(0,x)}
\le\sup_{0\le u\le t}\{\Stable(t)-\Stable(u)-(x\e^{-T}-\beta)(t-u)\}.
\]
For large $x$, the right side is bounded by $2\sup_{u\le T}|\Stable(u)|$ and tends to zero at every continuity point of $\Stable$.
Consequently $I_t^{(0,x)}\to0$ almost surely at each fixed deterministic $t>0$, and $\int_0^T I_u^{(0,x)}\dd u\to0$ by dominated convergence.
Regulator cancellation gives
\[
Q_t^{(0,x)}-x=X_t+I_t^{(0,x)}+\int_0^tI_u^{(0,x)}\dd u.
\]
Also $\tau_\ell^\downarrow\ge\log(x/\ell)$, so the downcrossing occurs after any fixed $T$ for all sufficiently large $x$.
The density in \Cref{lem:stable-convolution-tail} gives $\Pp(X_t=0)=0$.
For each fixed $t>0$, both $\Pp_{(0,x)}(Q_t>x,\ t<\tau_\ell^\downarrow)$ and $\Pp_{(0,x)}(Q_t-I_t>x,\ t<\tau_\ell^\downarrow)$ converge to $\Pp(X_t>0)$ as $x\to\infty$.
We use \cref{eq:killed-occupation-bound} to dominate these probabilities by an integrable function of $t$, so both occupation factors converge to $\int_0^\infty\Pp(X_t>0)\dd t$.

On the positive half-line, the zero-resolvent measure gives expected occupation times:
\[
\int_0^\infty\Pp(X_t\in\dd v)\dd t
=\frac{\e^{-\theta_c v}}{\mu_c}\dd v,\qquad v>0,
\]
by Kuznetsov et~al. (\cite{KuznetsovKyprianouRivero2012}, Theorem~2.7\textup{(iv)}).
The inverse of $\Phi$ restricted to $[\theta_c,\infty)$ has derivative $1/\Phi'(\theta_c)=1/\mu_c$ at zero.
Integrating over $v>0$ gives $\int_0^\infty\Pp(X_t>0)\dd t=1/(\theta_c\mu_c)$, proving \cref{eq:high-level-occupation-limit}. \hfill
\end{proof}

For fixed levels $\ell,a$ as in \Cref{lem:queue-regeneration}, \cref{eq:queue-regenerative-representation,eq:high-level-occupation-limit} give
\[
\Pp_\pi(Q>x)\sim\Pp_\pi(Q-I>x)
\sim\frac{\mathsf p_\ell(a,x)}{\theta_c\mu_c\,\E_{(0,a)}\mathsf T_{\ell,a}},\qquad x\to\infty.
\]
It remains to determine the sharp asymptotic of the hitting probability.

\subsubsection{Sharp high-level hitting probabilities}
For $q>0$, define
\begin{equation}\label{eq:critical-hitting-weight}
w_c(q):=\e^{\theta_cq}q^{-\alpha/2}.
\end{equation}
The following barriers compare high-level hitting probabilities with ratios of $w_c$.

\begin{lemma}[Barriers for high-level hitting]\label[lemma]{lem:critical-barriers}
There are constants $q_{\mathrm b}>0$ and $\zeta_-,\zeta_+\in\R$ and functions $f_-,f_+\in C^2(\R_+\times(0,\infty))$ such that, for $q\ge q_{\mathrm b}$,
\[
\gen f_-\ge0,\qquad \gen f_+\le0<f_+,
\qquad (f_\pm)_i(0,q)+(f_\pm)_q(0,q)=0,
\]
\begin{equation}\label{eq:critical-barrier-boundary}
f_\pm(0,q)=w_c(q)(1+\zeta_\pm/q)>0.
\end{equation}
Both functions are bounded on every strip $\R_+\times[q_{\mathrm b},x]$.
Their compensated jump remainders satisfy the integrability condition defining $\mathcal C_J^2(E)$ on every compact subset of each strip.
\end{lemma}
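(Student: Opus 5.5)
We build $f_\pm$ by correcting the critical exponential $\e^{\theta_c(q-i)}$, as announced in the introduction.
Set $E_c(i,q):=\e^{\theta_c(q-i)}$.
Since $\jumpgenI\e^{-\theta_c i}=\kappa_\alpha\theta_c^\alpha\e^{-\theta_c i}$, a direct computation gives
\[
\gen E_c=\bigl[-(\beta-i-q)\theta_c-q\theta_c+\kappa_\alpha\theta_c^\alpha\bigr]E_c=\bigl[\Phi(\theta_c)+\theta_c i\bigr]E_c=\theta_c iE_c,
\]
and $(E_c)_i+(E_c)_q=0$ everywhere.
So $E_c$ is harmonic on the face and the only defect is the positive term $\theta_c iE_c$ in the interior.
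We look for
\[
f=\e^{\theta_c q}\bigl[q^{-\alpha/2}(1+\zeta/q)\,g_1(i)+q^{-\alpha/2-1}g_2(i,q)\bigr],
\]
where $g_1(0)=1$ and $g_2(0,q)=0$.
The boundary value is then $w_c(q)(1+\zeta/q)$, which is \cref{eq:critical-barrier-boundary}.
For each ansatz we compute the oblique derivative at $i=0$ and $\gen f$ as an expansion in powers of $1/q$, uniformly in $i$.

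The key steps run in order.

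First, insert the prefactor $q^{-\gamma}$ with $g_1(i)=\e^{-\theta_c i}h(i)$.
The $q$-derivative of $q^{-\gamma}$ spoils the oblique condition by $-\gamma q^{-\gamma-1}\e^{\theta_c q}$.
We fix this by a boundary correction $h'(0)=\gamma/q$ (or by a term of the form $i\e^{-\theta_c i}/q$).
The leading generator term $-q f_q$ contributes $-q\cdot(-\gamma/q)f=\gamma f$ at order $q^0$.
The interior defect $\theta_c i$, together with the contribution from $-(i+q)f_i$, must be cancelled by choosing $h$ to solve a one-dimensional equation.
The equation in the $i$ variable at leading order in $q$ is
\[
-q\bigl(g_1'+\theta_c g_1\bigr)+\text{lower order}=0,
\]
with $g_1$ adjusted by terms of order $1/q$.
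Matching orders, the $O(1)$ coefficient reduces to an equation of the form $\jumpgenI h+(\beta+\ldots)h'+\ldots=c(\gamma)$.
Solvability of this equation together with the boundary derivative condition pins down $\gamma$.
We expect the solvability condition to come from pairing against the stationary measure of the reflected net-input process at the critical tilt (drift $\mu_c$).
This should give $\gamma=\alpha/2$: the tilted process has drift $\mu_c=(\alpha-1)\beta$, the oblique term produces $\gamma\mu_c$, and the jump/drift cancellation yields $\alpha(\alpha-1)\beta/2$.
We verify this coefficient identity by explicit computation with the beta integral used in \Cref{lem:jump-operator-estimates}.

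Second, with $\gamma=\alpha/2$ the residual in $\gen f$ is $O(q^{-1})f$, uniformly on compact $i$-sets.
For large $i$ the term $-q f_q$ and the drift $-(i+q)f_i$ dominate, since $f$ decays in $i$ like $\e^{-\theta_c i}$ up to polynomial corrections.
The sign of the residual is then decided by $\zeta$ and a small multiple of a bounded auxiliary function such as $\pm\eta q^{-1}\psi_0(i)w_c(q)$.
The function $\psi_0$ has vanishing value and derivative at $0$, so it leaves the boundary condition intact.
By \Cref{lem:jump-operator-estimates}, $\jumpgenI\psi_0(0)>0$.
Choosing $\zeta_\pm$ and the sign of $\eta$ appropriately makes the $q^{-1}$ coefficient strictly of the required sign for every $i\ge0$.
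Taking $q_{\mathrm b}$ large then makes higher-order terms negligible, giving $\gen f_-\ge0$ and $\gen f_+\le0$.
Positivity of $f_+$ holds for $q$ large.

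Third, we check regularity.
Boundedness on strips and the $\mathcal C_J^2$ integrability follow as in \Cref{lem:jump-operator-estimates} and the proof of \Cref{prop:foster-lyapunov}, because all $i$-dependence is through $\e^{-\theta_c i}$ times polynomials and $\psi_0$.

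The main obstacle is the first step: the order-by-order cancellation that must simultaneously satisfy the oblique derivative condition and kill the interior defect $\theta_c iE_c$.
The key point is showing that this forces exactly $\gamma=\alpha/2$, with a corrector $h$ of controlled growth.
I would first try a polynomial corrector $h(i)=1+(a_1 i+a_2 i^2)/q$ and see whether $\jumpgenI$ applied to $i^k\e^{-\theta_c i}$, computed explicitly via Gamma functions, closes the system.
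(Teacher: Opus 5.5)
There is a genuine gap in your second step. You cannot fix the sign at relative order $q^{-1}$, either by choosing $\zeta$ or by adding $\pm\eta q^{-1}\psi_0(i)w_c(q)$.

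At the face, any $f$ with zero oblique derivative has its large terms cancel:
\[
\gen f(0,q)=-\beta\,\frac{\dd}{\dd q}f(0,q)+\jumpgen f(0,q).
\]
Your first step forces the profile $\e^{\theta_c(q-i)}q^{-\gamma}[1+c_1(i)/q+\cdots]$ with $c_1(i)=\zeta+\gamma i+\theta_c i^2/2$, because cancelling the $O(1)$ residual means $c_1'=\gamma+\theta_c i$. Insert $f(0,q)=w_c(q)(1+\zeta/q)$ and use $\kappa_\alpha\theta_c^\alpha=\beta\theta_c$ and $\int_0^\infty z(1-\e^{-\theta_c z})\nu_{\mathrm A}(\dd z)=\alpha\beta$. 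The coefficient of $w_c(q)q^{-1}$ in $\gen f(0,q)$ is then $\theta_c\sigma_c^2/2-\mu_c\gamma$, where $\sigma_c^2=\Phi''(\theta_c)$. The $\zeta$-terms $-\beta\theta_c\zeta$ (drift) and $+\beta\theta_c\zeta$ (jumps) cancel.

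At $\gamma=\alpha/2$ this coefficient is exactly $0$. So $\zeta$ cannot make it strictly signed at $i=0$; it first enters at order $q^{-2}$.

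Your auxiliary term makes this worse. $\pm\eta q^{-1}\psi_0w_c$ contributes $\mp\eta(\psi_0'+\theta_c\psi_0)$ at order $q^0$ in the interior. At the face it contributes $\pm\eta\jumpgenI\psi_0(0)q^{-1}$, with the opposite sign. The positivity of $\jumpgenI\psi_0(0)$ that you cite therefore works against you. Whichever sign of $\eta$ you take, the inequality fails either on a layer $0\le i\lesssim 1/q$ or for $i$ of order one.

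The missing idea is to expand one order further. Put $h_c:=\e^{\theta_c(q-i)}$ and let $\jumpgen_c$ be the jump operator with tilted L\'evy measure $\e^{-\theta_c z}\nu_{\mathrm A}(\dd z)$.

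Because the drift $-q\partial_i$ dominates, each order is a first-order ODE in $i$ solved by integration. It is not an integro-differential equation with a solvability condition against a stationary measure. The correctors are $c_1'=\gamma+\theta_c i$ and $c_2'=\mathcal C_1c_1$ with $c_2(0)=0$, where $\mathcal C_jf:=\jumpgen_cf-(\mu_c+i)f'+(\gamma+j+\theta_ci)f$.

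The exponent is fixed by requiring $c_2'(0)$ to match the oblique condition at order $q^{-\gamma-2}$, which is exactly $\theta_c\sigma_c^2/2=\mu_c\gamma$. Your numerical identity is right, but this boundary matching is the actual mechanism.

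Then $b_\zeta:=h_cq^{-\gamma}[1+c_1/q+c_2/q^2]$ satisfies $\gen b_\zeta=w_cq^{-2}\e^{-\theta_ci}\mathcal C_2c_2(i;\zeta)$ exactly. Here $\partial_\zeta\mathcal C_2c_2(0;\zeta)=-\mu_c$, so you can choose $\zeta_\pm$ with $\mathcal C_2c_2(0;\zeta_\pm)=\mp1$.

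The auxiliary term must be $\pm C_{\mathrm b}w_c(q)q^{-3}\psi_0(i)$. Its favorable interior contribution $\mp C_{\mathrm b}(\psi_0'+\theta_c\psi_0)$ then sits at the same order $q^{-2}$ as the $\zeta$-controlled face value. Its unfavorable face contribution $\pm C_{\mathrm b}\jumpgenI\psi_0(0)$ drops to $O(q^{-3})$.

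Finally, $\psi_0$ is not damped by $\e^{-\theta_c i}$. A large $C_{\mathrm b}$ therefore also dominates the bounded residual $\e^{-\theta_ci}\mathcal C_2c_2$ for large $i$, giving the required signs uniformly in $i\ge0$.
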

\begin{proof}
We first match the generator cancellations to the oblique derivative condition, and then enforce the generator signs uniformly in idleness.
Set $h_c(i,q):=\e^{\theta_c(q-i)}$ and $\sigma_c^2:=\Phi''(\theta_c)$.
The function $h_c$ has zero oblique derivative and satisfies $\gen h_c=\theta_ci\,h_c$.
After factoring out $h_c$, the drift term $-q\partial_i$ suggests corrections in successive powers of $q^{-1}$.
For a trial exponent $\gamma$ and $\zeta\in\R$, define the polynomials $c_1(\cdot;\zeta)$ and $c_2(\cdot;\zeta)$ by $c_2(0;\zeta)=0$ and
\begin{equation}\label{eq:critical-polynomial-recursion}
\begin{split}
c_1(i;\zeta)&=\zeta+\gamma i+\frac{\theta_c}{2}i^2,\\
c_2'(i;\zeta)&=\frac{\theta_c\sigma_c^2}{2}-(\mu_c+i)(\gamma+\theta_ci)+(\gamma+1+\theta_ci)c_1(i;\zeta).
\end{split}
\end{equation}
The profile
\begin{equation}\label{eq:critical-profile}
b_\zeta(i,q):=h_c(i,q)q^{-\gamma}
\left[1+\frac{c_1(i;\zeta)}q+\frac{c_2(i;\zeta)}{q^2}\right]
\end{equation}
then cancels the terms of orders $q^{-\gamma}$ and $q^{-\gamma-1}$ in $h_c^{-1}\gen b_\zeta$; \Cref{app:critical-barriers} gives the exact generator identities.
At the reflecting face, direct differentiation gives
\[
(\partial_i+\partial_q)b_\zeta(0,q)
=\e^{\theta_cq}q^{-\gamma-2}
\left(\frac{\theta_c\sigma_c^2}{2}-\mu_c\gamma\right).
\]
Thus the oblique derivative condition determines
\begin{equation}\label{eq:critical-tail-power}
\gamma=\frac{\theta_c\sigma_c^2}{2\mu_c}=\frac\alpha2.
\end{equation}
Fix this value of $\gamma$ and recall $\psi_0(i)=i^2/(1+i)^2$ from \cref{eq:polynomial-idleness-correction}.
For constants $\zeta_-,\zeta_+\in\R$ and $C_{\mathrm b}>0$ to be chosen, define
\begin{equation}\label{eq:critical-barrier-construction}
f_\pm(i,q):=b_{\zeta_\pm}(i,q)
\pm C_{\mathrm b}w_c(q)q^{-3}\psi_0(i).
\end{equation}
The correction and its oblique derivative vanish at $i=0$, preserving the boundary condition and the values $f_\pm(0,q)=w_c(q)(1+\zeta_\pm/q)$.
\Cref{app:critical-barriers} chooses $\zeta_\pm$ to give opposite signs to the leading generator terms at the face, then chooses $C_{\mathrm b}$ and $q_{\mathrm b}$ to extend these signs to all $i\ge0$ and $q\ge q_{\mathrm b}$ and verify positivity of $f_+$, boundedness, and jump integrability as stated. \hfill
\end{proof}
We now stop $f_-$ and $f_+$ at the first exit from $\ell<Q<x$ to estimate the probability of reaching $x$ before $\ell$.

\begin{lemma}[Sharp high-level hitting probabilities]\label[lemma]{lem:sharp-queue-hitting}
Fix $\ell>\max\{q_{\mathrm L},q_{\mathrm b}\}$ with $\Pp_\pi(Q<\ell)>0$.
For every sufficiently large fixed $a>\ell$, there is $\mathsf k_\ell(a)\in(0,\infty)$ such that
\begin{equation}\label{eq:sharp-queue-hitting}
\mathsf p_\ell(a,x)
=\mathsf k_\ell(a)x^{\alpha/2}\e^{-\theta_cx}[1+O(x^{-1})],\qquad x\to\infty.
\end{equation}
\end{lemma}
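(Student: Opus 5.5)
The plan is optional stopping for the barriers of \Cref{lem:critical-barriers}, stopped at the exit from the strip $\ell<Q<x$, followed by a renewal-type argument to extract the constant.

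Start the process from $(0,b)$ with $\ell<b<x$, and let $\tau:=\tau_x^\uparrow\wedge\tau_\ell^\downarrow$. Since $Q$ is continuous, $Q_\tau\in\{\ell,x\}$. At an upcrossing of $x$ the state is $(0,x)$. At the downcrossing of $\ell$ the state is $(I_\tau,\ell)$ with $I_\tau\ge0$ random.

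On $[0,\tau]$ the functions $f_\pm$ satisfy the oblique derivative condition and have signed generators, since $\ell>q_{\mathrm b}$. They are bounded on $\R_+\times[\ell,x]$, so the stopped Dynkin formula \cref{eq:stopped-dynkin-formula}, localized as in \Cref{prop:generator-criterion}, gives
\[
f_-(0,b)\le\E_{(0,b)}f_-(Y_\tau),
\qquad
f_+(0,b)\ge\E_{(0,b)}f_+(Y_\tau).
\]
Here $\tau<\infty$ almost surely by \Cref{lem:killed-decay}, and boundedness lets us pass $t\to\infty$.

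Splitting on the exit side gives
\[
f_\pm(0,b)\lessgtr \mathsf p_\ell(b,x)\,f_\pm(0,x)+\E_{(0,b)}\bigl[f_\pm(I_\tau,\ell);\tau_\ell^\downarrow<\tau_x^\uparrow\bigr].
\]
The lower-exit terms are bounded by $\sup_i|f_\pm(i,\ell)|$, a constant $C_\ell$ independent of $x$. Using \cref{eq:critical-barrier-boundary}, this yields two-sided bounds
\[
\frac{w_c(b)(1+\zeta_-/b)-C_\ell}{w_c(x)(1+\zeta_-/x)}\le\mathsf p_\ell(b,x)\le\frac{w_c(b)(1+\zeta_+/b)+C_\ell}{w_c(x)(1+\zeta_+/x)}.
\]
These give the order $x^{\alpha/2}\e^{-\theta_cx}$ for large $b$, but the two constants differ. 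Sharpness therefore needs an additional argument.

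To obtain the constant, use the strong Markov property at the upcrossing of an intermediate level $y$ with $a<y<x$. The upcrossing state is $(0,y)$, which gives the exact multiplicativity
\[
\mathsf p_\ell(a,x)=\mathsf p_\ell(a,y)\,\mathsf p_\ell(y,x).
\]
Then consider $r(x):=\mathsf p_\ell(a,x)\,w_c(x)$. By multiplicativity,
\[
\frac{r(x)}{r(y)}=\mathsf p_\ell(y,x)\frac{w_c(x)}{w_c(y)}.
\]
By the barrier bounds applied with $b=y$, this ratio lies between
\[
\frac{1+\zeta_-/y-C_\ell/w_c(y)}{1+\zeta_-/x}
\quad\text{and}\quad
\frac{1+\zeta_+/y+C_\ell/w_c(y)}{1+\zeta_+/x}.
\]
Hence $|\log r(x)-\log r(y)|\le C/y$ for all $x>y\ge a$ with $a$ large. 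This uses that $C_\ell/w_c(y)$ decays exponentially and that $\zeta_\pm$ are fixed.

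So $\log r$ is Cauchy at infinity, and $r(x)\to\mathsf k_\ell(a)\in(0,\infty)$. Letting $y$ vary with $x\to\infty$ in the Cauchy bound, taken with the roles chosen so that the error is $C/x$, gives
\[
r(x)=\mathsf k_\ell(a)\bigl[1+O(x^{-1})\bigr].
\]
Concretely: fix $x$, let $x'\to\infty$ in $|\log r(x')-\log r(x)|\le C/x$, which yields $|\log\mathsf k_\ell(a)-\log r(x)|\le C/x$. Since $1/w_c(x)=x^{\alpha/2}\e^{-\theta_cx}$, this is \cref{eq:sharp-queue-hitting}.

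Positivity of $\mathsf k_\ell(a)$ requires the lower bound to be nontrivial. This needs $w_c(a)(1+\zeta_-/a)>C_\ell$, which holds for all sufficiently large fixed $a$ because $w_c$ grows exponentially. This is exactly the reason for the restriction to sufficiently large $a$.

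The main obstacle I expect is justifying optional stopping with $f_\pm$. These functions are only defined on $q>0$, they grow like $\e^{-\theta_c i}$ times polynomials or through the $\psi_0$ correction in $i$, and large idleness jumps can occur before $\tau$. I would first check that $f_\pm$ restricted to $\R_+\times[\ell,x]$ is bounded, as the barrier lemma states. I would then extend it smoothly below $q=\ell$ to a function in $\mathcal C_J^2(E)$ without altering values on the strip, and apply \Cref{prop:generator-criterion} up to $\tau\wedge\tau_m\wedge t$. Finally I would let $m,t\to\infty$ by bounded convergence. The other delicate point is the uniformity in $b$ of the constant $C_\ell$, which follows from that same boundedness on the line $q=\ell$.
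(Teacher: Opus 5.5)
Your proposal is correct and follows essentially the same route as the paper. Both arguments stop $f_\pm$ at the exit from $\ell<Q<x$ to get two-sided ratio bounds, use the multiplicativity $\mathsf p_\ell(a,x)=\mathsf p_\ell(a,y)\mathsf p_\ell(y,x)$ at intermediate upcrossings, and run a Cauchy argument for $w_c(x)\mathsf p_\ell(a,x)$, with $a$ taken large precisely so that the lower bound is positive.

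The differences are cosmetic. You bound both lower-exit terms by $\sup_i|f_\pm(i,\ell)|$, whereas the paper drops the $f_+$ term using $f_+\ge0$ and bounds the other by $\sup_i f_-^+(i,\ell)$; you also work with $\log r$ rather than differences.

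One technical point: a cutoff extension below $\ell$ need not satisfy the oblique derivative condition. So rather than citing \Cref{prop:generator-criterion} verbatim, run its stopped It\^o argument with the regulator term retained. That term vanishes because $Q\ge\ell$ before $\tau$.
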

\begin{proof}
We first prove the uniform estimate
\begin{equation}\label{eq:uniform-hitting-ratio}
\mathsf p_\ell(b,x)\frac{w_c(x)}{w_c(b)}=1+O(b^{-1}),\qquad b\to\infty,\quad x\ge b.
\end{equation}
For $\ell<b<x$, set $\tau=\tau_x^\uparrow\wedge\tau_\ell^\downarrow$, which is finite by \cref{eq:killed-exit-bound}.
Apply the stopped It\^o argument from the proof of \Cref{prop:generator-criterion} to $f_\pm$ up to $t\wedge\tau\wedge\tau_m$.
Since $Q$ is continuous and jumps change only $I$, all queue values lie in $[\ell,x]$.
Thus the smoothness, oblique derivative condition, and jump-integrability condition in \Cref{lem:critical-barriers} suffice.
The regulator term vanishes, and the generator signs give the corresponding stopped sub- and supermartingale inequalities.
Set $B_\ell:=\sup_{i\ge0}f_-^+(i,\ell)<\infty$.
At the upper exit the state is $(0,x)$; at the lower exit, $f_+\ge0$ and $f_-\le B_\ell$.
Bounded convergence in the stopping inequalities gives
\begin{equation*}
\frac{f_-(0,b)-B_\ell}{f_-(0,x)}
\le\mathsf p_\ell(b,x)
\le\frac{f_+(0,b)}{f_+(0,x)}.
\end{equation*}
Substitute \cref{eq:critical-barrier-boundary}.
The factors $1+\zeta_\pm/x$ differ from one by $O(b^{-1})$ uniformly for $x\ge b$, and $B_\ell/w_c(b)=o(b^{-1})$.
This proves \cref{eq:uniform-hitting-ratio}; $x=b$ is immediate.

Choose $a$ sufficiently large that $f_-(0,a)>B_\ell$.
These upper and lower bounds show that $w_c(x)\mathsf p_\ell(a,x)$ is bounded above and away from zero for $x\ge a$.
The strong Markov property at an intermediate upcrossing gives $\mathsf p_\ell(a,x)=\mathsf p_\ell(a,b)\mathsf p_\ell(b,x)$ for $a\le b\le x$.
Together with \cref{eq:uniform-hitting-ratio}, this yields
\[
\frac{w_c(x)\mathsf p_\ell(a,x)}{w_c(b)\mathsf p_\ell(a,b)}
=1+O(b^{-1}),\qquad b\to\infty,\quad x\ge b\ge a.
\]
Since $w_c(b)\mathsf p_\ell(a,b)$ is bounded above, the difference of the normalized hitting probabilities is $O(b^{-1})$, uniformly for $x\ge b$.
Thus $w_c(x)\mathsf p_\ell(a,x)$ forms a Cauchy family as $x\to\infty$.
Its limit $\mathsf k_\ell(a)$ is finite and strictly positive by the upper and lower bounds established above.
Letting $x\to\infty$ in the difference bound gives an $O(b^{-1})$ error, proving \cref{eq:sharp-queue-hitting}.\hfill
\end{proof}

\subsubsection{Stationary tails and consequences}
\begin{proof}[Proof of \cref{eq:queue-sharp-tail} in \Cref{thm:marginal-tails}]
Choose $\ell$ and $a$ as in \Cref{lem:sharp-queue-hitting}.
Substitute \cref{eq:high-level-occupation-limit,eq:sharp-queue-hitting} into the two cases of \cref{eq:queue-regenerative-representation}.
Both right tails are asymptotic to $C_\pi x^{\alpha/2}\e^{-\theta_cx}$, with
\begin{equation}\label{eq:queue-tail-constant}
C_\pi:=\frac{\mathsf k_\ell(a)}
{\theta_c\mu_c\,\E_{(0,a)}\mathsf T_{\ell,a}}\in(0,\infty).
\end{equation}
The two occupation factors have the same limit, so the constants agree.
This representation is independent of the sufficiently large auxiliary levels, since it equals the stationary tail constant. \hfill
\end{proof}

The idleness and queue tails establish divergence at the critical moment parameters, while \Cref{lem:stationary-moment-bounds} supplies finiteness below them.
\begin{proof}[Proof of \Cref{thm:invariant-moments}]
For $p<\alpha$ and $\theta<\theta_c$, choose $p<\bar p<\alpha$ and $\theta<\bar\theta<\theta_c$.
The pointwise bound $(1+I)^p\e^{\theta Q}\le(1+I)^{\bar p}\e^{\bar\theta Q}$ and \Cref{lem:stationary-moment-bounds} give finiteness.
If $p\ge\alpha$, the idleness tail in \cref{eq:idleness-tail} and tail integration give $\E_\pi I^p=\infty$.
The queue tail in \cref{eq:queue-sharp-tail} gives
\[
\E_\pi\e^{\theta_cQ}
=1+\theta_c\int_0^\infty\e^{\theta_cx}\Pp_\pi(Q>x)\dd x
=\infty,
\]
because the integrand is asymptotic to $C_\pi x^{\alpha/2}$.
Larger exponential orders diverge by monotonicity.
The joint weight dominates each marginal factor, proving necessity in \cref{eq:exact-invariant-moment-domain}.\hfill
\end{proof}

The idleness-weighting identity determines mean idleness conditional on a large queue, while the product-test identity identifies its typical scale.
Let $H_\alpha$ have the Mittag--Leffler distribution of parameter $\alpha-1$ \cite{Pillai1990}, characterized by
\[
\E\e^{-uH_\alpha}=\frac{1}{1+u^{\alpha-1}},\qquad u\ge0.
\]
For $x>0$, put $a_x:=(x/\kappa_\alpha)^{1/(\alpha-1)}$.
Write $\operatorname{Exp}(\theta_c)$ for the exponential law of rate $\theta_c$.

\begin{corollary}[Conditional idleness and queue excess]\label[corollary]{cor:large-queue-idleness}
As $x\to\infty$,
\begin{equation}\label{eq:large-queue-conditioning}
\begin{split}
x\E_\pi[I\mid Q>x] &\longrightarrow\frac\beta{\theta_c},\qquad \\
\law_\pi(a_xI,Q-x\mid Q>x)&\Longrightarrow\law(H_\alpha)\otimes\operatorname{Exp}(\theta_c).
\end{split}
\end{equation}
\end{corollary}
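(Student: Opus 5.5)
The first limit follows from the idleness-weighting identity alone. The joint limit should come from the regenerative representation of \Cref{lem:queue-regeneration} combined with a fast-time-scale analysis of idleness near a high queue level.

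\emph{Conditional mean.} Apply \Cref{thm:idleness-weighting} with $\varphi=\1_{(x,\infty)}$:
\[
\E_\pi[I;Q>x]=\beta\,\Pp(V_{\mathrm u}Q_\pi>x)=\beta\,\E_\pi\bigl[(Q-x)^+/Q\bigr].
\]
On $\{Q>x\}$ we have $1/Q=x^{-1}(1+O((Q-x)/x))$. By \cref{eq:queue-sharp-tail}, $\E_\pi[(Q-x)^+]=\int_x^\infty\Pp_\pi(Q>v)\dd v\sim\Pp_\pi(Q>x)/\theta_c$, because $v^{\alpha/2}\e^{-\theta_cv}$ integrates to $\theta_c^{-1}x^{\alpha/2}\e^{-\theta_cx}(1+o(1))$. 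The correction term $\E_\pi[(Q-x)^2;Q>x]/x^2$ is $O(\Pp_\pi(Q>x)/x^2)$ by the same tail. Dividing by $\Pp_\pi(Q>x)$ gives $x\E_\pi[I\mid Q>x]\to\beta/\theta_c$.

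\emph{Joint limit.} I will prove convergence of the joint Laplace transform $\E_\pi[\e^{-ua_xI-r(Q-x)};Q>x]/\Pp_\pi(Q>x)$ for $u,r\ge0$. Renewal--reward and the strong Markov property at $\tau_x^\uparrow$, exactly as in \Cref{lem:queue-regeneration}, show that the numerator equals $\mathsf p_\ell(a,x)\,\mathcal O_\ell(x;u,r)/\E_{(0,a)}\mathsf T_{\ell,a}$, where
\[
\mathcal O_\ell(x;u,r):=\E_{(0,x)}\int_0^{\tau_\ell^\downarrow}\e^{-ua_xI_t-r(Q_t-x)}\1_{\{Q_t>x\}}\dd t .
\]
Dividing by the same representation of $\Pp_\pi(Q>x)$ together with \Cref{lem:high-level-occupation}, it suffices to show
\[
\mathcal O_\ell(x;u,r)\to\frac{1}{1+u^{\alpha-1}}\int_0^\infty\E[\e^{-rX_t};X_t>0]\dd t=\frac{1}{1+u^{\alpha-1}}\cdot\frac{1}{\mu_c(\theta_c+r)},
\]
where the last equality uses the zero-resolvent density $\e^{-\theta_cv}/\mu_c$. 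Multiplying by $\theta_c\mu_c$ gives the claimed product $\E\e^{-uH_\alpha}\cdot\theta_c/(\theta_c+r)$. Domination in $t$ comes from \cref{eq:killed-occupation-bound}, as in \Cref{lem:high-level-occupation}. So the task reduces to a pointwise statement at each fixed $t>0$, under the coupling of all initial states $(0,x)$ with one stable path: $Q_t-x\to X_t$ almost surely, which is already shown there, and $a_xI_t\Rightarrow H_\alpha$ jointly with it and asymptotically independent of it.

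\emph{Main obstacle: the fast-scale limit of $a_xI_t$.} Heuristically, near time $t$ the idleness is a spectrally positive stable process with drift $\beta-Q_s\approx-x$, reflected at zero. Its stationary Laplace transform, by the Pollaczek--Khinchine formula for the exponent $\kappa_\alpha s^\alpha+(x-\beta)s$, is $1/(1+\kappa_\alpha s^{\alpha-1}/(x-\beta))$. At $s=u/a_x$ this tends to $1/(1+u^{\alpha-1})$, since $\kappa_\alpha a_x^{1-\alpha}=x^{-1}\cdot$ the correct normalization. It relaxes on the time window $\delta_x:=a_x/x\to0$.

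To make this rigorous I will use the one-sided Skorokhod formula for $I$ on a short window $[t-K\delta_x,t]$. I will write $I_t$ as the supremum of $\Stable(t)-\Stable(v)-\int_v^t(Q_w-\beta+I_w)\dd w$ over $v$, or the same expression plus $I_{t-K\delta_x}$ when the supremum is not attained. On this window, $Q_w=x+O(1)$ uniformly by continuity, and $\int I_w\dd w$ is negligible. Stable scaling maps $\Stable(t-\delta_x\,\cdot)$, rescaled by $a_x$, to a stable process on a unit time scale, since $\delta_x^{1/\alpha}/a_x^{-1}$ matches the drift $x\delta_x=a_x$. Hence $a_xI_t$ is close to $\sup_{0\le v\le K}[\widetilde S(v)-v]$ for an independent time-reversed stable copy $\widetilde S$. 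This is the stationary reflected variable, whose law is $H_\alpha$ as $K\to\infty$. Because the window shrinks, $\widetilde S$ depends on input increments in $[t-K\delta_x,t]$, which become asymptotically independent of $X_t$; the latter is determined by the rest of the path up to an $o(1)$ error. The part I expect to be delicate is the uniform control of the initial term $a_xI_{t-K\delta_x}$ and of the $O(1)$ fluctuations of $Q$ inside the Skorokhod supremum. I would handle both with the crude bound $I\le2\sup|\Stable|$ from \Cref{lem:high-level-occupation} and a first-passage estimate showing that the rescaled supremum is attained within $K$ with probability $1-o_K(1)$.
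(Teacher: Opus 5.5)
Your conditional-mean argument is essentially the paper's: idleness weighting plus tail integration. For the joint limit you take a genuinely different route. The paper never leaves stationarity. It inserts $s_x=ua_x$ and the smoothed test function $h_x(q)=\chi((q-x)/\varepsilon_x)\e^{-r(q-x)}$, with $\varepsilon_x=a_x^{-1/2}$, into \cref{eq:stationary-product-test}, and divides by $s_xx\Pp_\pi(Q>x)$. It then uses three facts: $\kappa_\alpha s_x^{\alpha-1}=u^{\alpha-1}x$; the sharp tail, through $\Pp_\pi(Q>x+\varepsilon_x)/\Pp_\pi(Q>x)\to1$ and a bounded conditional excess mean; and $s_x\varepsilon_x\to\infty$. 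This gives $(1+u^{\alpha-1})\E_\pi[\e^{-ua_xI-r(Q-x)}\mid Q>x]\to\theta_c/(\theta_c+r)$ in a few lines. Your regenerative argument is much heavier, but it exposes the mechanism:
the factor $\mathsf p_\ell(a,x)$ cancels, so the joint law needs only the occupation analysis of \Cref{lem:high-level-occupation} and not \Cref{lem:sharp-queue-hitting};
the exponential excess is the zero-resolvent density of $X$;
the Mittag--Leffler law is the Pollaczek--Khinchine law of a fast reflected stable process with drift about $-x$;
and independence comes from disjoint input increments.

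As written, however, the fast-scale step has its scaling inverted and would not work. Since $1/(\alpha-1)>1$, we have $a_x/x\to\infty$, so your $\delta_x=a_x/x$ does not shrink and $a_x\delta_x^{1/\alpha}\to\infty$. Idleness is of order $a_x^{-1}$, the Laplace variable is $s=ua_x$ (not $u/a_x$), and the window must be $\delta_x:=1/(xa_x)$. Write $\Lambda(v):=\Stable(t)-\Stable(v)-\int_v^t(Q_w-\beta+I_w)\dd w$. Then $a_x\delta_x^{1/\alpha}=\kappa_\alpha^{-1/\alpha}$ and $a_xx\delta_x=1$, so $a_x\Lambda(t-\delta_xw)=\kappa_\alpha^{-1/\alpha}\widetilde{\Stable}(w)-w+O(K/x)$ uniformly for $w\le K$. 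Here $\widetilde{\Stable}$, the time-reversed and rescaled increments, is a copy of $\Stable$. The error uses only $|Q_w-x|+I_w\le C(\omega)$ on $[0,t]$. The limit process has Laplace exponent $\lambda^\alpha+\lambda$, and its supremum has transform $1/(1+\lambda^{\alpha-1})$, as required.

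Separately, the crude bound $I\le2\sup|\Stable|$ cannot by itself control $a_xI_{t-K\delta_x}$, because $a_x\to\infty$. Instead, apply the Skorokhod formula on an intermediate window $[t-\Delta_x,t]$ with $\Delta_x\to0$ and $x\Delta_x\to\infty$, for example $\Delta_x=x^{-1/2}$.
Since $Q_w-x=X_w+I_w+\int_0^wI_u\dd u\ge X_w$, one has $Q\ge x-C(\omega)$ on $[0,t]$. Together with $\Stable(t)-\Stable(t-\Delta_x)\to0$ almost surely, this makes the branch $I_{t-\Delta_x}+\Lambda(t-\Delta_x)$ eventually negative. Hence $I_t=\sup_{t-\Delta_x\le v\le t}\Lambda(v)$ exactly.
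For $t-v=\delta_xw$ with $w\ge K$, the same lower bound on $Q$ gives $a_x\Lambda(v)\le\kappa_\alpha^{-1/\alpha}\widetilde{\Stable}(w)-w/2$ for large $x$. The supremum of the right side over $w\ge K$ is negative with probability $1-o_K(1)$, and its law does not depend on $x$.
You cannot also require $a_x\Delta_x\to0$, so two-sided closeness of $Q$ to $x$ should be used only on the near window $[t-K\delta_x,t]$. With these corrections your route goes through.
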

\begin{proof}
Equation \cref{eq:idleness-weighting} and tail integration give
\begin{equation}\label{eq:idleness-weighted-queue-tail}
\E_\pi[I;Q>x]
=\beta\E_\pi(1-x/Q)^+
=\beta\int_x^\infty\frac{x}{v^2}\Pp_\pi(Q>v)\dd v
\sim\frac{\beta C_\pi}{\theta_c}x^{\alpha/2-1}\e^{-\theta_cx}.
\end{equation}
For the last step, put $v=x+z$ and apply \cref{eq:queue-sharp-tail}; the normalized integrand is dominated by a constant times $\e^{-\theta_cz}$ since $\alpha/2-2<0$.
Division by \cref{eq:queue-sharp-tail} proves the mean limit.

Write $\overline F_Q(x):=\Pp_\pi(Q>x)$.
The tail equivalent \cref{eq:queue-sharp-tail} gives $\overline F_Q(x+z)/\overline F_Q(x)\to\e^{-\theta_cz}$ for each $z\ge0$, so $Q-x$ conditional on $Q>x$ converges to $\operatorname{Exp}(\theta_c)$.
The ratio is bounded by $C(1+z)^{\alpha/2}\e^{-\theta_cz}$ for all sufficiently large $x$ and all $z\ge0$; tail integration therefore gives $\E_\pi[Q-x\mid Q>x]=O(1)$.
Fix $u>0$ and $r\ge0$, and set $s_x:=ua_x$ and $\varepsilon_x:=a_x^{-1/2}$.
Choose a smooth $\chi:\R\to[0,1]$ equal to zero on $(-\infty,0]$ and one on $[1,\infty)$, and let
\[
h_x(q):=\chi\left(\frac{q-x}{\varepsilon_x}\right)\e^{-r(q-x)}.
\]
For each $x$, $h_x$ and its first two derivatives are bounded, so \cref{eq:stationary-product-test} applies with $s=s_x$ and $h=h_x$.
Since $\overline F_Q(x+\varepsilon_x)/\overline F_Q(x)\to1$, replacing $h_x(Q)$ by $\e^{-r(Q-x)}\1_{\{Q>x\}}$ in normalized expectations incurs an $o(1)$ error.
Replacing $Q/x$ by one incurs an $O(x^{-1})$ error by the conditional excess-mean bound.
Moreover, $i\e^{-s_xi}\le(\e s_x)^{-1}$ and $|h_x'(q)|\le(C/\varepsilon_x+r)\1_{\{q>x\}}$ give
\[
\frac{\E_\pi[I\e^{-s_xI}h_x(Q)]}{x\overline F_Q(x)}
\le\frac{1}{\e s_xx},
\qquad
\frac{\E_\pi[Q|h_x'(Q)|(1-\e^{-s_xI})]}{s_xx\overline F_Q(x)}
=O\bigl((s_x\varepsilon_x)^{-1}\bigr)=o(1).
\]
Divide \cref{eq:stationary-product-test} by $s_xx\overline F_Q(x)$.
Since $\kappa_\alpha s_x^{\alpha-1}/x=u^{\alpha-1}$, these estimates yield
\[
(1+u^{\alpha-1})\E_\pi[\e^{-ua_xI-r(Q-x)}\mid Q>x]
=\E_\pi[\e^{-r(Q-x)}\mid Q>x]+o(1)\longrightarrow\frac{\theta_c}{\theta_c+r}.
\]
Taking $r=0$ gives the Mittag--Leffler marginal by the continuity theorem for Laplace transforms.
Together with the queue-excess limit, this gives tightness of the pair; the joint Laplace transform identifies every subsequential limit as the product law in \cref{eq:large-queue-conditioning}.\hfill
\end{proof}

The Mittag--Leffler limit has infinite mean, as follows from its Laplace transform.
Idleness conditional on $Q>x$ typically has scale $x^{-1/(\alpha-1)}$, smaller than its conditional-mean scale $x^{-1}$.
Indeed, $a_x/x\to\infty$, so $xI\to0$ in conditional probability, while $\E_\pi[xI\mid Q>x]\to\beta/\theta_c>0$.
The family is therefore not uniformly integrable: rarer, larger idleness values determine the conditional mean.

\Cref{app:critical-queue-moments} gives critical queue weights and near-critical transform asymptotics.

\subsection{Small-idleness asymptotics}\label{sec:boundary-asymptotics}

As $s\to\infty$, the bounded test function $\e^{-si}$ suppresses contributions from idleness bounded away from zero.
The jump operator contributes the coefficient $\kappa_\alpha s^\alpha$, whereas the oblique derivative at $i=0$ is $-s$; their stationary balance produces the exponent $\alpha-1$.
Retaining a test function of $Q$ identifies the limiting queue distribution conditional on small idleness.

\begin{proof}[Proof of \Cref{thm:boundary-asymptotics}]
Let $h\ge0$ be smooth with compact support, or let $h(q)=\e^{\theta q}$ with $0\le\theta<\theta_c$.
Divide \cref{eq:stationary-product-test} by $s$ and rearrange:
\begin{equation}\label{eq:weighted-boundary-laplace-balance}
\begin{split}
\kappa_\alpha s^{\alpha-1}\E_\pi[\e^{-sI}h(Q)]
={}&\beta\E_\pi[\e^{-sI}h(Q)]
-\E_\pi[(I+Q)\e^{-sI}h(Q)]\\
&+s^{-1}\E_\pi[Q\e^{-sI}h'(Q)]
+\eta(h)-s^{-1}\eta(h').
\end{split}
\end{equation}
For compactly supported $h$, boundedness of $h,h'$ and $\E_\pi(I+Q)<\infty$ control the state terms in \cref{eq:weighted-boundary-laplace-balance}.
For $h(q)=\e^{\theta q}$, choose $\theta<\bar\theta<\theta_c$; the bound $\E_\pi[(1+I)\e^{\bar\theta Q}]<\infty$ from \Cref{lem:stationary-moment-bounds} also gives $\E_\pi[Q\e^{\theta Q}]<\infty$.
In both cases, \cref{eq:regulator-occupation} makes $\eta(h)$ and $\eta(|h'|)$ finite.
These bounds are uniform for $s\ge1$; since $I>0$ almost surely, dominated convergence gives
\begin{equation}\label{eq:weighted-boundary-laplace-limit}
\lim_{s\to\infty}s^{\alpha-1}\E_\pi[\e^{-sI}h(Q)]
=\frac{\E_\pi[Qh(Q)]}{\kappa_\alpha}.
\end{equation}

To recover the boundary probabilities, define finite measures
\[
\mu_{\varepsilon,h}(\dd u)
:=\varepsilon^{1-\alpha}\e^{-u}\E_\pi[h(Q)\1_{\{I/\varepsilon\in\dd u\}}].
\]
By \cref{eq:weighted-boundary-laplace-limit}, their Laplace transforms at $z\ge0$ tend to $[\E_\pi Qh(Q)/\kappa_\alpha](1+z)^{1-\alpha}$, including convergence of total mass at $z=0$.
For positive limiting mass, normalize by total mass and apply the continuity theorem for Laplace transforms (\cite{Feller1971}, Chapter~XIII, Section~1).
Multiplying by the convergent total masses gives weak convergence to the finite measure $\frac{\E_\pi[Qh(Q)]}{\kappa_\alpha\Gamma(\alpha-1)}u^{\alpha-2}\e^{-u}\dd u$.
If the limiting mass is zero, convergence follows directly from the total masses.
For each $x>0$, integrating the bounded function $\e^u\1_{[0,x]}(u)$ gives
\begin{equation}\label{eq:weighted-boundary-probability}
\lim_{\varepsilon\downarrow0}\varepsilon^{1-\alpha}
\E_\pi[h(Q);I\le x\varepsilon]
=\frac{x^{\alpha-1}\E_\pi[Qh(Q)]}{\kappa_\alpha\Gamma(\alpha)}.
\end{equation}
The interval endpoints have zero limiting mass, justifying the integration.
For $h=1$ and $x=1$, the result is \cref{eq:small-idleness-asymptotic}.
For compactly supported $h$, division by this limit shows that the conditional queue laws converge vaguely to $q\pi_Q(\dd q)/m_Q$.
Since this limiting measure has total mass one, the convergence is weak.
Under the conditioning, $I/\varepsilon$ lies in $[0,1]$.
Taking $0<x\le1$ in \cref{eq:weighted-boundary-probability} identifies every joint subsequential limit through $x^{\alpha-1}\E h(Q^{\mathrm{sb}})$.
The limit factors as $\Pp(B\le x)\E h(Q^{\mathrm{sb}})$, identifying the product law in \cref{eq:boundary-product-limit}. \hfill
\end{proof}

The limiting queue law is the normalized regulator occupation measure, here obtained by conditioning stationary observations on $I\le\varepsilon$ and letting $\varepsilon\downarrow0$.

The boundary asymptotic controls negative moments of $I$, while the upper tail controls positive moments.
\begin{proof}[Proof of \Cref{cor:real-idleness-moments}]
For $a>0$, Tonelli's theorem gives
\[
\E_\pi[I^{-a};I\le1]
=\Pp_\pi(I\le1)+a\int_0^1x^{-a-1}\Pp_\pi(I\le x)\dd x.
\]
By \cref{eq:small-idleness-asymptotic}, the integrand near zero is bounded above and below by positive constant multiples of $x^{\alpha-a-2}$.
The integral is finite exactly when $a<\alpha-1$; the contribution from $I>1$ is bounded by one.
For $r>0$, the identity
\[
\E_\pi[I^r;I>1]
=\Pp_\pi(I>1)+r\int_1^\infty x^{r-1}\Pp_\pi(I>x)\dd x
\]
and \cref{eq:idleness-tail} give finiteness exactly when $r<\alpha$.
The complementary regions contribute bounded terms, and $\E_\pi I^0=1$.
This proves \cref{eq:real-idleness-moments}, including divergence at both endpoints.

For the endpoint limits, write $c_0:=m_Q/[\kappa_\alpha\Gamma(\alpha)]$ and $c_\infty:=\cA/\alpha^2$.
The boundary and upper-tail equivalents give $\Pp_\pi(I\le x)\sim c_0x^{\alpha-1}$ as $x\downarrow0$ and $\Pp_\pi(I>x)\sim c_\infty x^{-\alpha}$ as $x\to\infty$.
Fix $\varepsilon\in(0,1)$ and choose $0<x_0<1<x_1$ so that the ratios $\Pp_\pi(I\le x)/(c_0x^{\alpha-1})$ and $\Pp_\pi(I>x)/(c_\infty x^{-\alpha})$ lie in $[1-\varepsilon,1+\varepsilon]$ for $0<x\le x_0$ and $x\ge x_1$, respectively.
Apply the two truncated tail-integral formulas with cutoffs $x_0$ and $x_1$.
The contributions from $I>x_0$ in the negative-moment calculation and from $I\le x_1$ in the positive-moment calculation remain uniformly bounded as $a\uparrow\alpha-1$ and $r\uparrow\alpha$, respectively.
The boundary terms in the truncated formulas are also uniformly bounded, so these terms vanish after multiplication by the respective prefactors.
Keeping the cutoffs fixed, we have
\[
(\alpha-1-a)\int_0^{x_0}x^{\alpha-a-2}\dd x
=x_0^{\alpha-1-a}\to1,
\qquad
(\alpha-r)\int_{x_1}^\infty x^{r-\alpha-1}\dd x
=x_1^{r-\alpha}\to1.
\]
The tail comparisons bound the lower and upper limits of the normalized negative moments between $(1-\varepsilon)(\alpha-1)c_0$ and $(1+\varepsilon)(\alpha-1)c_0$, and those of the normalized positive moments between $(1-\varepsilon)\alpha c_\infty$ and $(1+\varepsilon)\alpha c_\infty$.
Letting $\varepsilon\downarrow0$ gives \cref{eq:inverse-idleness-pole,eq:positive-idleness-pole}. \hfill
\end{proof}

\section{Conclusion}\label{sec:discussion}

We have shown that heavy-tailed idleness, a light-tailed queue, and exponential convergence to equilibrium coexist in this degenerate reflected system.
The different stationary tails reflect the way input fluctuations enter the two coordinates: positive jumps create idleness, while queue growth occurs through the regulator at zero idleness.
The queue-tail equivalent further separates the exponential rate determined by the input and spare capacity from the polynomial correction imposed by the oblique reflection direction.

Reflection also links the convergence argument to the stationary dependence structure.
Boundary contact transmits randomness to the queue coordinate and makes two-dimensional minorization possible.
In stationarity, normalized regulator occupation and small-idleness conditioning identify the same size-biased queue law.
The large-queue conditional limit reveals another aspect of this dependence: typical idleness is of smaller order than its conditional mean, so the mean alone does not describe the scale of residual idle capacity during extreme congestion.

The stable-path construction provides a complementary view of how stationary performance changes with spare capacity.
With the input parameters fixed, coupling stationary states through a common stable path shows that increasing $\beta$ reduces the queue in stochastic order and yields diminishing reductions in its mean.
These comparisons describe capacity effects across the stationary queue distribution and give a direct interpretation of the construction.

The pathwise and regenerative representations provide starting points for numerical evaluation with controlled errors and for studying the stationary queue mean and tail constant as spare capacity becomes small or large.
For the queueing application, a central next step is to establish the reflected SDE as a limit of JSQ systems with heavy-tailed renewal arrivals and to prove convergence of their scaled stationary distributions.
Quantitative approximation bounds would then connect the equilibrium laws and capacity comparisons obtained here to the performance of finite-server systems.

\appendix
\crefalias{section}{appendix}
\crefalias{subsection}{appendix}
\crefalias{subsubsection}{appendix}

\section{Pathwise comparison and fixed-time minorization}\label{app:fixed-time-minorization}

We first prove \Cref{prop:initial-nonexpansiveness}, then use this comparison to control perturbations of the selected jump times and quantify how increasing the first jump size decreases the terminal queue.
These estimates provide the pathwise bounds needed for the two-jump construction of the fixed-time minorization in \Cref{prop:fixed-time-minorization}.
Recall $K_0=[0,\bar i_0]\times[0,\bar q_0]$ with $\bar i_0>\beta$ and $\bar q_0>0$.
Two selected jump sizes control the terminal queue and idleness through a triangular map.
We establish its geometry, and then use the L\'evy--It\^o decomposition to obtain a local minorization near $(\beta,0)$.
Deterministic attraction extends the bound to $K_0$.

\subsection{Nonexpansiveness in the initial state}\label{app:initial-nonexpansiveness}

\begin{proof}[Proof of \Cref{prop:initial-nonexpansiveness}]
The argument is pathwise for any common c\`adl\`ag input $w$ with no negative jumps.
Each regulator is continuous: if $\Delta U_t>0$, then $I_t=I_{t-}+\Delta w_t+\Delta U_t>0$, contradicting the regulator condition \cref{eq:regulator-condition}.
Set $x=I^y-I^{\widetilde y}$, $z=Q^y-Q^{\widetilde y}$, and $D=U^y-U^{\widetilde y}$.
The differences are continuous and of finite variation, with $\dd x=-(x+z)\dd t+\dd D$ and $\dd z=-z\dd t+\dd D$.
Inequalities between regulator differentials mean inequalities of Lebesgue--Stieltjes measures, restricted to the indicated sets where appropriate.
The chain rule for continuous finite-variation functions, with $\operatorname{sgn}(0)=0$, gives
\[
\dd(|x|+|z|)
=-\bigl[|x|+\operatorname{sgn}(x)z+|z|\bigr]\dd t
+[\operatorname{sgn}(x)+\operatorname{sgn}(z)]\dd D.
\]
On $\{x>0\}$, the first idleness is positive, so $\dd D\le0$; its coefficient is nonnegative.
On $\{x<0\}$, the second idleness is positive, so $\dd D\ge0$; its coefficient is nonpositive.
The coefficient of $\dd t$ on either set is at most $-|x|$.
For a continuous finite-variation function, its differential restricted to any level set is zero.
On $\{x=0\}$ this gives $\dd D=z\dd t$, hence $\dd z=0$ and no change in $|x|+|z|$.
Integrating proves \cref{eq:initial-nonexpansive}.
The estimate holds for all initial states on each such input path, hence simultaneously almost surely for the stable input. \hfill
\end{proof}

\subsection{Reference dynamics and queue sensitivity}

For the minorization construction, write $\Stable(t)=M^{(\varepsilon)}(t)-m_\varepsilon t+J^{(\varepsilon)}(t)$ as in \cref{eq:stable-levy-decomposition}, where $M^{(\varepsilon)}$ contains the compensated jumps of size at most $\varepsilon$, $J^{(\varepsilon)}$ contains the larger jumps, and $m_\varepsilon=\cA\varepsilon^{1-\alpha}/(\alpha-1)$.
Put $d:=m_\varepsilon-\beta$.
As $\varepsilon\downarrow0$, $d\to\infty$ while $\int_0^\varepsilon z^2\nu_{\mathrm A}(\dd z)\to0$.
With the small-jump path set to zero, the reference dynamics between selected jumps are
\[
(\dot I,\dot Q)=(-d-I-Q,-Q)\quad\text{when }I>0,
\qquad
(\dot I,\dot Q)=(0,d)\quad\text{when }I=0.
\]
From a post-jump state $(i,q)$, reflection-free evolution over $u$ time units gives
\[
Q(u)=q\e^{-u},\qquad I(u)=\e^{-u}(i+d)-d-qu\e^{-u}.
\]

Initial-state perturbations and jump-time shifts satisfy the following estimates.
By \cref{eq:fixed-time-solution-lipschitz}, perturbing the initial state and adding a path $m$ changes the solution by at most $C_T(\|y-\bar y\|+\|m\|_T)$ on $[0,T]$, with $C_T$ independent of the additive drift.
To vary jump times, suppose the reference initial state $\bar y$ and jump sizes $r_j$ satisfy $\|\bar y\|+\sum_j r_j=O(1+d)$.
Its speed between jumps is then $O(1+d)$: $I_t\le I_0+\sum_j r_j$ and $Q_t\le Q_0+dt$.
Moving a jump from $s_j^*$ to $s_j$ changes the state after both copies of that jump by at most $C(1+d)|s_j-s_j^*|$.
Nonexpansiveness in \Cref{prop:initial-nonexpansiveness} propagates this error through subsequent common input.
Thus, after the corresponding jumps have occurred in both paths, the state error is bounded by
\begin{equation}\label{eq:fixed-time-time-perturbation}
C_T(\|y-\bar y\|+\|m\|_T)+C(1+d)\sum_j|s_j-s_j^*|.
\end{equation}
Comparing at equal elapsed times after a jump adds only reference evolution over the time displacement, so the same bound holds after increasing $C$.
These constants are uniform for bounded horizons and fixed bounds on the reference initial state and jump sizes divided by $1+d$.

Changing the first jump size changes only idleness immediately after that jump; the next result quantifies the subsequent queue separation.
\begin{lemma}[Queue sensitivity to an idleness displacement]\label[lemma]{lem:excursion-queue-sensitivity}
Consider two solutions with a common c\`adl\`ag input having no negative jumps, $Q_s^{(1)}=Q_s^{(2)}$, and $I_s^{(2)}-I_s^{(1)}=\delta>0$.
Then $0\le Q_t^{(1)}-Q_t^{(2)}\le\delta$ for $t\ge s$.
If the second trajectory first reaches $I=0$ at $\tau\le s+t_{\mathrm r}$ with $t_{\mathrm r}<1$, then
\begin{equation}\label{eq:fixed-time-common-input-separation}
Q_t^{(1)}-Q_t^{(2)}\ge \e^{-(t-\tau)}(1-t_{\mathrm r})\delta,
\qquad t\ge\tau.
\end{equation}
\end{lemma}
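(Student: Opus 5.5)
We work with the differences $x=I^{(1)}-I^{(2)}$, $z=Q^{(1)}-Q^{(2)}$, $D=U^{(1)}-U^{(2)}$ on $[s,\infty)$, as in the proof of \Cref{prop:initial-nonexpansiveness}. Both regulators are continuous, and the differences are continuous and of finite variation. They satisfy $\dd x=-(x+z)\dd t+\dd D$ and $\dd z=-z\dd t+\dd D$, with $x_s=-\delta$ and $z_s=0$. Set $w:=z-x$. Then $\dd w=x\dd t$, because the regulator difference cancels. The first claim says that $0\le z\le\delta$. For the upper bound, \Cref{prop:initial-nonexpansiveness} started at time $s$ gives $|x_t|+|z_t|\le\delta$, so $z_t\le\delta$. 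For the lower bound, we use the sign structure of $\dd D$. Where $x<0$, the second idleness is positive, so $\dd U^{(2)}=0$ and $\dd D\ge0$. Where $x>0$, we have $\dd D\le0$. On $\{x=0\}$ the level-set argument gives $\dd D=z\dd t$, hence $\dd z=0$ there.

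To show $z\ge0$, suppose $z_t<0$ at some time and let $t_0$ be the last time before it with $z=0$. On $(t_0,t]$ we have $z<0$. We then argue that $x\le0$ there. The quantity $w=z-x$ has derivative $x$, and $|x|\le\delta-|z|$. More directly, on a set where $x>0$ and $z<0$, the first solution has more idleness but a smaller queue. The chain-rule computation for $|x|+|z|$ from the nonexpansiveness proof then shows that $\dd D\le0$ and $\dd z=-z\dd t+\dd D$. This is not immediately signed, so instead we use the ordering of the regulators. We claim that $U^{(1)}-U^{(1)}_s\ge U^{(2)}-U^{(2)}_s$ always holds. The second solution has extra idleness, which delays its reflection, so its regulator pushes less. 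We prove this by a comparison argument on $D$: at the first time $D$ would become negative, we must have $\dd U^{(2)}>0$, so $I^{(2)}=0\le I^{(1)}$, which gives $x\ge0$. Then $z=\int e^{-(t-r)}\dd D_r$, by variation of constants in the queue equation (\Cref{eq:queue-variation-of-constants}). So $z\ge0$ follows once $D$ is nondecreasing in the sense that $\dd D\ge0$ as a measure. We show that $x\le0$ throughout. The case $x>0$ requires $I^{(1)}>I^{(2)}\ge0$; at the first crossing $x=0$ the coordinates coincide, so $\dd z=0$ there and thereafter both solutions move identically. Hence $x\le0$ on $[s,\infty)$ up to coalescence, which gives $\dd D\ge0$ and $z\ge0$. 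I expect this sign bookkeeping to be the main obstacle, and I would organise it around the claim "$x\le0$ until coalescence, after which $x=z=0$". That claim simultaneously gives $z\ge0$ and, via $w$, the bound $z\le\delta$.

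For \cref{eq:fixed-time-common-input-separation}, we evaluate at $\tau$. The identity $\dd w=x\dd t$ with $x\ge-\delta$ gives $z_\tau-x_\tau=\delta+\int_s^\tau x_r\dd r\ge\delta(1-t_{\mathrm r})$. At $\tau$ we have $I^{(2)}_\tau=0$, so $x_\tau=I^{(1)}_\tau\ge0$. By the no-crossing claim, $x_\tau=0$, unless coalescence has already occurred. Coalescence cannot occur before $\tau$, because on $[s,\tau)$ the second idleness is positive and $x<0$ persists. This gives $z_\tau\ge(1-t_{\mathrm r})\delta$. For $t\ge\tau$, $\dd D\ge0$ and $\dd z=-z\dd t+\dd D\ge-z\dd t$, so $z_t\ge e^{-(t-\tau)}z_\tau$, which is the asserted bound.
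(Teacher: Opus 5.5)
There is a genuine gap, at the step you yourself flagged. The proof needs the sign claim that $x=I^{(1)}-I^{(2)}\le0$ for all $t\ge s$, and you use it to obtain $\dd D\ge0$, $z\ge0$ and $x_\tau=0$. Your justification is that at the first zero of $x$ ``the coordinates coincide'' and ``thereafter both solutions move identically''. This is false. A zero of $x$ equates only the idleness coordinates, and the queue difference $z$ is generally positive there. The lemma's own conclusion shows this: $x_\tau=0$ while $z_\tau\ge(1-t_{\mathrm r})\delta>0$. So the first zero of $x$ is not a coalescence, and the two solutions keep evolving differently afterwards.

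What you actually need is that $x$ cannot become positive after touching zero. The natural argument for that is: on $\{x>0\}$ one has $\dd U^{(1)}=0$, hence $\dd x\le-(x+z)\dd t<0$. But this requires $z\ge0$, which is exactly what you are trying to derive from $x\le0$. Your first-time argument for $D$ is circular in the same way. At a time where $D$ starts to decrease you only learn $I^{(2)}=0$, i.e.\ $x\ge0$, and that is a contradiction only if $x\le0$ is already known. The claim that $x<0$ persists on $[s,\tau)$ is also asserted without proof.

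The fix is the part of \Cref{prop:initial-nonexpansiveness} you did not use, namely its integral term. Restarted at time $s$, it gives $|x_t|+|z_t|\le\delta-\int_s^t|x_u|\dd u$. Your own identity $\dd w=x\dd t$ gives the exact relation $z_t-x_t=\delta+\int_s^t x_u\dd u$. Subtracting the second from the first yields $2(x_t^++z_t^-)\le-2\int_s^t x_u^+\dd u\le0$. Hence $x\le0\le z$ on all of $[s,\infty)$ at once, with no first-crossing bookkeeping, and $z\le\delta$ as you noted. This is how the paper's proof proceeds, with $x$ taken of the opposite sign.

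After that, your remaining steps are correct:
\begin{itemize}
\item $\dd D\ge0$ on $\{x<0\}$, because $I^{(2)}>0$ there.
\item $\dd D\ge0$ on $\{x=0\}$, because the level-set argument gives $\dd D=z\dd t$ there.
\item $0\le I^{(1)}_\tau\le I^{(2)}_\tau=0$ gives $x_\tau=0$.
\item Using $x\ge-\delta$, $z_\tau=\delta+\int_s^\tau x_u\dd u\ge(1-t_{\mathrm r})\delta$.
\item $\dd z\ge-z\dd t$ on $[\tau,\infty)$ gives the asserted exponential lower bound.
\end{itemize}
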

\begin{proof}
Set $x=I^{(2)}-I^{(1)}$, $z=Q^{(1)}-Q^{(2)}$, and $D=U^{(1)}-U^{(2)}$.
The difference equations and \Cref{prop:initial-nonexpansiveness} give
\[
x_t+z_t=\delta-\int_s^t x_u\dd u,
\qquad
|x_t|+|z_t|\le\delta-\int_s^t|x_u|\dd u.
\]
Subtracting yields $2(x_t^-+z_t^-)\le-2\int_s^t x_u^-\dd u\le0$, so $x,z\ge0$ and $x+z\le\delta$.
On $\{x>0\}$, $I^{(2)}>0$ and $\dd U^{(2)}=0$, hence $\dd D\ge0$.
On $\{x=0\}$, the continuous finite-variation identity $\dd x=(-x+z)\dd t-\dd D$ gives $\dd D=z\dd t\ge0$, since the differential of $x$ vanishes on its zero set.
At $\tau$, both idleness coordinates vanish and $z_\tau=\delta-\int_s^\tau x_u\dd u\ge(1-t_{\mathrm r})\delta$.
Finally, $\dd z=-z\dd t+\dd D$ gives $z_t\ge\e^{-(t-\tau)}z_\tau$. \hfill
\end{proof}

\subsection{Two-jump maps and local minorization}

Write $y_*=(\beta,0)$.
For an initial state $y$, a horizon $T$, and times $0<s_1<s_2<T$, let $\Psi(a,b)$ be the terminal state under the input
\[
z(t)=-(\beta+d)t+m(t)+a\1_{[s_1,T]}(t)+b\1_{[s_2,T]}(t).
\]
The map's dependence on $y,m,s_1,s_2,T,d$ is suppressed.
We choose the jump times and size intervals so that the first jump controls the terminal queue, while the second controls terminal idleness during a reflection-free interval.

\begin{lemma}[Uniform two-jump terminal map]\label[lemma]{lem:fixed-time-two-jump-minorization}
There exist $r,\delta,\kappa>0$ such that, for every sufficiently large $d$, one can choose $0<T<3$, compact intervals $A_d,B_d\Subset(1,\infty)$ of positive length, nonempty open intervals $W_1,W_2\Subset(0,T)$ with $\sup W_1<\inf W_2$, and a nonempty open rectangle $\mathcal G_d\Subset(0,\infty)^2$ with the following property.
For every $y\in E$ with $\|y-y_*\|\le r$, every c\`adl\`ag $m$ with $m(0)=0$, no negative jumps, and $\|m\|_T\le\delta$, and every $s_j\in W_j$, the terminal map on $A_d\times B_d$ satisfies
\begin{equation}\label{eq:fixed-time-terminal-state-form}
\Psi(a,b)=(\iota(a)+cb,q(a)),\qquad
\kappa\le c\le1,\qquad
\mathcal G_d\subset\Psi(A_d\times B_d),
\end{equation}
where
\begin{equation}\label{eq:fixed-time-terminal-queue-sensitivity}
\kappa(a'-a)\le q(a)-q(a')\le a'-a,\qquad a<a',\quad a,a'\in A_d.
\end{equation}
\end{lemma}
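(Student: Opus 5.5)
The plan is to first build the map explicitly for the reference dynamics ($y=y_*$, $m\equiv0$, jump times at fixed centers $s_1^*,s_2^*$), and then treat the perturbations. The perturbations are the initial state within $r$, the path $m$ with $\|m\|_T\le\delta$, and jump times in $W_j$. They will be absorbed through \cref{eq:fixed-time-time-perturbation} together with the structural facts of \Cref{prop:initial-nonexpansiveness} and \Cref{lem:excursion-queue-sensitivity}.

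\textbf{Reference geometry.} Under the reference dynamics from $y_*$, the idleness decreases at rate at least $d$. It therefore hits $I=0$ within time $O(1/d)$, after which $Q$ grows at rate $d$ along the face. At a time $s_1^*$ of order one (say $s_1^*=1$), the state is $(0,q_1)$ with $q_1\approx d$. A jump of size $a$ produces $(a,q_1)$. Reflection-free evolution with the formulas $Q(u)=q_1\e^{-u}$ and $I(u)=\e^{-u}(a+d)-d-q_1u\e^{-u}$ returns to the face after time $u_1(a)=O(a/d)$, which is small once $a$ ranges over a fixed compact interval $A_d\Subset(1,\infty)$ (for instance $[2,3]$). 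Regulation then resumes, and the queue continues to grow along the face until time $s_2^*$ (say $s_2^*=2$). There a jump of size $b\in B_d$ is chosen large enough, of order $d$ (say $b\in[(1+\lambda)d,(1+2\lambda)d]$ so that $B_d\Subset(1,\infty)$), so that the subsequent reflection-free excursion lasts past $T$, with $T$ slightly larger than $s_2^*$ and $T<3$. On $[s_2,T]$ there is no reflection, so the idleness is affine in $b$ with coefficient $c=\e^{-(T-s_2)}\in[\kappa,1]$, and the queue is unaffected by $b$. This gives the triangular form $\Psi(a,b)=(\iota(a)+cb,q(a))$. In particular, $q(a)$ is determined by the path up to the face hit after $s_2$ and does not depend on $b$.

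\textbf{Queue sensitivity.} Compare first jumps $a<a'$ by applying \Cref{lem:excursion-queue-sensitivity} at time $s_1$ with $\delta=a'-a$. Since the excursion after the first jump returns to the face within time $t_{\mathrm r}=O(1/d)<1/2$, the lemma gives
\[
\e^{-(T-\tau)}(1-t_{\mathrm r})(a'-a)\le q(a)-q(a')\le a'-a.
\]
Here $T-\tau\le3$, which yields \cref{eq:fixed-time-terminal-queue-sensitivity} with $\kappa=\e^{-3}/2$, uniformly in the perturbations. The lemma only requires a common input with no negative jumps and the hitting bound for the trajectory with the larger jump. That bound is stable under perturbation because the drift $-d$ dominates $\delta$, $r$, and the time shifts.

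\textbf{Containing a rectangle; main obstacle.} It remains to show $\mathcal G_d\subset\Psi(A_d\times B_d)$ uniformly. The map $a\mapsto q(a)$ is continuous and strictly decreasing, with range an interval of length at least $\kappa|A_d|$. For fixed $a$, the map $b\mapsto\iota(a)+cb$ covers an interval of length at least $\kappa|B_d|$. Two uniformity issues must be settled. The first is that these intervals are positioned at locations that drift only by $O(r+\delta+(1+d)\max_j|W_j|)$ under the perturbations, by \cref{eq:fixed-time-time-perturbation}; choosing $|W_j|\lesssim1/(1+d)$ and $r,\delta$ small keeps the drift below a quarter of the interval lengths. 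The second is that $\iota(a)$ varies by at most $O(a'-a)$, which also follows from nonexpansiveness. A rectangle $\mathcal G_d$ centered at the reference image, with half-widths a fixed fraction of $\kappa|A_d|$ and $\kappa|B_d|$, then lies in the image by the intermediate value theorem, applied first in $a$ to match $q$ and then in $b$ to match $i$. The delicate step is to guarantee that the second excursion genuinely stays reflection-free up to $T$ for all admissible perturbations while the terminal idleness remains positive and bounded. This requires $(T-s_2)$ and $b/d$ to be tuned so that $\e^{-(T-s_2)}(b+d)-d-q_2(T-s_2)\e^{-(T-s_2)}$ stays bounded away from zero. I would handle it by fixing $T-s_2$ small relative to $\lambda$, using the explicit formula, and letting the $O(1+d)$ time-shift errors be controlled by the choice of $W_j$ above. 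The rectangle sits in $(0,\infty)^2$ because the terminal queue is of order $d$ and positive.
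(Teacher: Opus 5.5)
Your proposal is correct and follows the same architecture as the paper's proof. Both use an explicit reference map, \Cref{lem:excursion-queue-sensitivity} at the first jump for the two-sided Lipschitz bound on $q$, a reflection-free final interval for the affine dependence on $b$, and \cref{eq:fixed-time-time-perturbation} with jump-time windows of width proportional to $(1+d)^{-1}$ for uniformity. The only difference is the choice of scales. The paper places an order-$d$ first jump when the queue is at level $1$ (an order-one excursion) and uses an order-one window $B_d$ that keeps terminal idleness near a fixed $i_*$. Your order-one first jump at queue level of order $d$ and order-$d$ window $B_d$ instead give order-$d$ margins for reflection-freeness and positivity, so the step you flag as delicate is actually easier in your parametrization.
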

\begin{proof}
\textit{Reference map.} Fix $0<\tau_0<\tau_{\mathrm r}<\tau_1<1$ with $\e^{\tau_0}-1<\tau_{\mathrm r}$, and $s_{\mathrm f}\in(0,1)$.
From $y_*$, the reference path reaches the face at $\log(1+\beta/d)$ and then has $\dot Q=d$.
It reaches $(0,1)$ at
\[
s_1^*:=\log(1+\beta/d)+1/d.
\]
Take $s_2^*=s_1^*+\tau_1$ and $T=s_2^*+s_{\mathrm f}<3$ for large $d$.
A first jump $a$ at $s_1^*$ returns to the face after the unique time $\tau(a)$ satisfying
\[
a+d=d\e^{\tau(a)}+\tau(a).
\]
Choose $a_d=d(\e^{\tau_0}-1)+\tau_0$ and $A_d=[a_d-1,a_d+1]$.
Uniformly on $A_d$, $\tau(a)\le a/d<\tau_{\mathrm r}$ for large $d$.
The reference terminal queue after a reflection-free final interval is
\[
\bar q(a):=\e^{-s_{\mathrm f}}\bigl[\e^{-\tau(a)}+d(\tau_1-\tau(a))\bigr],
\qquad
\bar q'(a)=-\e^{-(s_{\mathrm f}+\tau(a))}.
\]
Put $q_d:=\bar q(a_d)$ and choose $r_q>0$ with $4r_q<\e^{-(s_{\mathrm f}+\tau_{\mathrm r})}$.
Then $\bar q(A_d)$ contains $[q_d-4r_q,q_d+4r_q]$.

Choose $r_b>0$ and $i_*>0$ so that
\[
\e^{-s_{\mathrm f}}r_b>s_{\mathrm f}(r_q+1)+2,
\qquad
m_*:=i_*-\e^{-s_{\mathrm f}}r_b-s_{\mathrm f}\e^{-s_{\mathrm f}}>0.
\]
Set $b_d=\e^{s_{\mathrm f}}(i_*+d+s_{\mathrm f}q_d)-d$ and $B_d=[b_d-r_b,b_d+r_b]$.
A second jump $b$ at $s_2^*$ gives reference terminal idleness
\[
\bar i(a,b):=i_*+\e^{-s_{\mathrm f}}(b-b_d)-s_{\mathrm f}(\bar q(a)-q_d).
\]
Since $|\bar q(a)-q_d|\le\e^{-s_{\mathrm f}}$, this is at least $m_*$ on $A_d\times B_d$.
The unreflected idleness multiplied by $\e^u$ is strictly decreasing, so terminal positivity makes the entire final interval reflection-free.
Idleness then decreases and stays at least $m_*$.
Both jump-size intervals have lower endpoints tending to infinity and upper endpoints $O(1+d)$.

\textit{Perturbation bounds.} Choose $0<\eta<\min\{r_q,1,m_*/2\}$.
First choose $r,\delta>0$ small enough, independently of $d$, then choose windows about $s_1^*,s_2^*$ of sufficiently small width proportional to $(1+d)^{-1}$.
Keep $W_1$ after the reference first boundary hit and $s_2-s_1>\tau_{\mathrm r}$.
Before the first selected jump, compare with the no-jump reference path, which lies on the face throughout $W_1$.
For the terminal states, use \cref{eq:fixed-time-time-perturbation}.
These comparisons give, uniformly over the allowed states, paths, times, and jump sizes,
\begin{equation}\label{eq:two-jump-uniform-error}
I_{s_1-}\le\eta,\qquad
|I_T(a,b)-\bar i(a,b)|\le\eta,\qquad
|Q_T(a,b)-\bar q(a)|\le\eta.
\end{equation}
Applying \cref{eq:fixed-time-time-perturbation} at equal elapsed times after the second jump, together with the $O(1+d)$ bound on the reference speed, gives
\[
\inf_{s_2\le t\le T}I_t
\ge m_*-C(r+\delta)-C(1+d)\sum_{j=1}^2|s_j-s_j^*|
\ge m_*/2.
\]
The timing term also covers the change in final-interval length from $s_{\mathrm f}$ to $T-s_2$.
All constants are independent of $d$, so the tolerances and windows can be chosen to satisfy these bounds simultaneously.
The regulator is therefore constant on $[s_2,T]$.
Thus $Q_T=q(a)$ is independent of $b$, and $I_T=\iota(a)+cb$ with $c=\e^{-(T-s_2)}\in[\e^{-3},1]$.

Every first-jump excursion returns before $s_1+\tau_{\mathrm r}$.
Indeed, up to that return,
\[
I_{s_1+u}\le\eta+a_d+1-du+2\delta,
\]
and the right side is negative at $u=\tau_{\mathrm r}$ for large $d$.
This bound includes any perturbation jump at $s_1$.
Compare first-jump sizes $a<a'$ with the same second jump.
By \Cref{lem:excursion-queue-sensitivity}, $q$ is nonincreasing and one-Lipschitz.
Equation \cref{eq:fixed-time-common-input-separation} gives the lower bound in \cref{eq:fixed-time-terminal-queue-sensitivity} with $\kappa=\e^{-3}(1-\tau_{\mathrm r})$, which also bounds $c$ from below.

\textit{Common image rectangle.} Set
\[
\mathcal G_d:=(i_*-1,i_*+1)\times(q_d-r_q,q_d+r_q).
\]
For large $d$, this rectangle has compact closure in $(0,\infty)^2$.
By \cref{eq:two-jump-uniform-error}, $q(A_d)$ contains its queue interval.
Given a queue value $v$ in that interval, choose the unique $a\in A_d$ with $q(a)=v$.
Then $|\bar q(a)-q_d|<r_q+\eta<r_q+1$.
At $b=b_d-r_b$, the actual terminal idleness is below $i_*-1$; at $b=b_d+r_b$, it is above $i_*+1$, by the choice of $r_b$ and the error bound $\eta<1$.
Its affine dependence on $b$ therefore gives \cref{eq:fixed-time-terminal-state-form}. \hfill
\end{proof}

The triangular map yields the Lebesgue lower bound needed for the local minorization.
Substitution first in $b$, then monotonically in $a$ through $v=q(a)$, gives
\begin{equation}\label{eq:fixed-time-two-jump-integration}
\int_{A_d}\int_{B_d}\1_{\{\Psi(a,b)\in H\}}\dd b\dd a
\ge\operatorname{Leb}_2(H\cap\mathcal G_d),\qquad H\in\mathcal B(E).
\end{equation}
Indeed, the affine coefficient $c$ is at most one, and $q$ is decreasing and one-Lipschitz.
The inverse substitutions therefore do not decrease length; the image inclusion supplies every slice of $H\cap\mathcal G_d$.
Bi-Lipschitz continuity justifies the monotone substitution.

\begin{samepage}
For $r>0$, let $B_*(r):=\{y\in E:\|y-(\beta,0)\|\le r\}$.

\begin{proposition}[Local fixed-time minorization]\label[proposition]{prop:local-fixed-time-minorization}
There exist $r,\varepsilon_*,T_*>0$ and a nonempty open rectangle $\mathcal G\Subset(0,\infty)^2$ such that
\[
P_{T_*}(y,H)\ge\varepsilon_*\nu_{\mathcal G}(H),
\qquad y\in B_*(r),\quad H\in\mathcal B(E),
\]
where $\nu_{\mathcal G}$ is the uniform probability measure on $\mathcal G$.
\end{proposition}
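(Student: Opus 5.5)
We will fix a sufficiently large truncation level and apply \Cref{lem:fixed-time-two-jump-minorization}. Concretely, choose $\varepsilon>0$ small so that $d=m_\varepsilon-\beta$ is large enough for the lemma, and set $T_*:=T$ and $\mathcal G:=\mathcal G_d$. Then we use the L\'evy--It\^o decomposition $\Stable(t)=M^{(\varepsilon)}(t)-m_\varepsilon t+J^{(\varepsilon)}(t)$, in which the compound Poisson part $J^{(\varepsilon)}$ of jumps larger than $\varepsilon$ is independent of $M^{(\varepsilon)}$. The proof consists of building an event of positive, $y$-independent probability on which the input has the form $z(t)$ used to define $\Psi$.

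To produce $m$, we split $J^{(\varepsilon)}$ according to jump size. Since $A_d,B_d\Subset(1,\infty)$, we may take $\varepsilon<1$ and write $J^{(\varepsilon)}$ as the sum of two independent compound Poisson processes. The first collects the jumps in $(\varepsilon,1]$; we put $m:=M^{(\varepsilon)}+J^{(\varepsilon,1]}$, which starts at zero and has no negative jumps. The second collects the jumps in $(1,\infty)$, with finite rate $\lambda=\nu_{\mathrm A}((1,\infty))$. On $[0,T]$ we require three things:
\begin{enumerate}
\item $\|m\|_T\le\delta$;
\item the large-jump process has exactly two jumps, the first at a time $s_1\in W_1$ and the second at $s_2\in W_2$;
\item the jump sizes satisfy $a\in A_d$ and $b\in B_d$.
\end{enumerate}
These three requirements are independent of one another. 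The first has positive probability by the small-ball bound \cref{eq:stable-small-ball}, or more directly because the jumps in $(\varepsilon,1]$ are absent with positive probability and the martingale $M^{(\varepsilon)}$ has a positive small-ball probability (Doob's inequality with small variance also works once $\varepsilon$ is small). The second is explicit: the jump times form a Poisson process, and the event has probability $\e^{-\lambda T}\lambda^2\operatorname{Leb}(W_1)\operatorname{Leb}(W_2)>0$, because $\sup W_1<\inf W_2$. For the third, the sizes are i.i.d.\ with density $\lambda^{-1}\cA z^{-1-\alpha}$, which is bounded below by a constant $c_d>0$ on the compact set $A_d\cup B_d$.

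On this event the input agrees with $z(t)$ for the realized $(m,s_1,s_2,a,b)$. Pathwise uniqueness from \Cref{prop:well-posedness} then gives $Y_T^y=\Psi(a,b)$. Conditioning on $m$ and on the jump times, we obtain
\[
P_T(y,H)\ge \Pp(\|m\|_T\le\delta)\,\e^{-\lambda T}\lambda^2\operatorname{Leb}(W_1)\operatorname{Leb}(W_2)\,\lambda^{-2}c_d^2\inf\int_{A_d}\int_{B_d}\1_{\{\Psi(a,b)\in H\}}\dd b\dd a,
\]
where the infimum runs over admissible $(y,m,s_1,s_2)$. By \cref{eq:fixed-time-two-jump-integration}, which holds uniformly over those parameters, the double integral is at least $\operatorname{Leb}_2(H\cap\mathcal G)$. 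Absorbing $\operatorname{Leb}_2(\mathcal G)$ into the constant gives $\varepsilon_*$, which is independent of $y\in B_*(r)$.

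The main obstacle is measurability and the conditioning step. The map $(a,b)\mapsto\Psi(a,b)$ has to be jointly measurable in $(m,s_1,s_2,a,b)$, so that Fubini applies. For this we will invoke the pathwise continuity estimates \cref{eq:fixed-time-solution-lipschitz} and \cref{eq:fixed-time-time-perturbation}, which make $\Psi$ continuous in its arguments. We also have to ensure that the decomposition of $m$ is independent of the large-jump marks, which follows from the Poisson random measure structure.
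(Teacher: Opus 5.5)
Your proof is correct and follows the paper's argument. You take $\varepsilon$ small so that $d=m_\varepsilon-\beta$ lies in the range of \Cref{lem:fixed-time-two-jump-minorization} and Doob's inequality controls the small-jump martingale for the fixed $\delta$; you use the Poisson density to select exactly one large jump in each of $W_1\times A_d$ and $W_2\times B_d$, and you integrate via \cref{eq:fixed-time-two-jump-integration}. Splitting the jumps at level $1$ and absorbing those in $(\varepsilon,1]$ into $m$ is only a bookkeeping variant of the paper's choice, which takes $m=M^{(\varepsilon)}$ and forbids all other jumps larger than $\varepsilon$.

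There are two harmless slips. First, \cref{eq:stable-small-ball} concerns $\Stable$, not $m=M^{(\varepsilon)}+J^{(\varepsilon,1]}$, so you should rely on your Doob/no-intermediate-jump argument for $\Pp(\|m\|_T\le\delta)>0$. Second, if $c_d$ bounds the normalized size density $\lambda^{-1}\cA z^{-1-\alpha}$ from below, the factor $\lambda^{-2}c_d^2$ should read $c_d^2$.
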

\end{samepage}
\begin{proof}
Take $r,\delta$ from \Cref{lem:fixed-time-two-jump-minorization}.
For sufficiently small $\varepsilon\in(0,1)$, $d=m_\varepsilon-\beta$ is in the lemma's range, $A_d,B_d\subset(\varepsilon,\infty)$, and the small-jump event $\mathcal E:=\{\|M^{(\varepsilon)}\|_3\le\delta\}$ satisfies
\[
\Pp(\mathcal E)\ge1-\frac{12}{\delta^2}\int_0^\varepsilon z^2\nu_{\mathrm A}(\dd z)>0
\]
by Doob's inequality and the vanishing variance integral.
Require exactly one large jump in each of $W_1\times A_d$ and $W_2\times B_d$, and no other jump larger than $\varepsilon$ on $[0,T]$.
Write $\lambda_\varepsilon:=\nu_{\mathrm A}((\varepsilon,\infty))$ and let $n_A,n_B>0$ be the minima of $\cA z^{-1-\alpha}$ on $A_d,B_d$.
Almost surely, the small-jump path has no negative jumps, so on $\mathcal E$ it meets the terminal-map lemma's hypotheses.
Independence of the L\'evy components, measurability of the solution map, the Poisson density, and \cref{eq:fixed-time-two-jump-integration} give
\[
P_T(y,H)\ge
\Pp(\mathcal E)\e^{-\lambda_\varepsilon T}|W_1||W_2|n_An_B
\operatorname{Leb}_2(H\cap\mathcal G_d),
\qquad y\in B_*(r).
\]
All factors are positive.
Take $T_*=T$, $\mathcal G=\mathcal G_d$, and normalize Lebesgue measure to obtain $\varepsilon_*$. \hfill
\end{proof}

\subsection{Extension to the reference compact set}

\begin{proof}[Proof of \Cref{prop:fixed-time-minorization}]
Use $r$ and $T_*$ from \Cref{prop:local-fixed-time-minorization}.
By \Cref{lem:deterministic-attraction}, choose $T_0$ such that $\sup_{y\in K_0}\|\varphi_{T_0}(y)-y_*\|<r/2$.
We apply \cref{eq:fixed-time-solution-lipschitz} on the small-ball event in \cref{eq:stable-small-ball} to obtain
\[
\inf_{y\in K_0}P_{T_0}(y,B_*(r))
\ge\Pp(\|\Stable\|_{T_0}<r/(2C_{T_0}))=:p_0>0.
\]
The semigroup property and the local minorization yield \cref{eq:fixed-time-minorization-main} with $\Delta=T_0+T_*$ and $\varepsilon_0=p_0\varepsilon_*$. \hfill
\end{proof}

\section{Estimates for the sharp queue tail}\label{app:queue-tail-estimates}

This appendix supplies the estimates used in the regenerative queue-tail analysis of \Cref{sec:queue-tail-analysis}.
We use the downcrossing bounds in \Cref{app:queue-downcrossing} to prove finite mean cycle lengths in \Cref{lem:queue-regeneration} and justify the occupation limit in \Cref{lem:high-level-occupation}.
These bounds also ensure that the exit time in the proof of \Cref{lem:sharp-queue-hitting} is almost surely finite.
In \Cref{app:critical-barriers}, we complete the construction of the barriers in \Cref{lem:critical-barriers}.

\subsection{Bounds before a queue downcrossing}\label{app:queue-downcrossing}

\begin{lemma}[Exponential bounds before a queue downcrossing]\label[lemma]{lem:killed-decay}
For each $\theta_0\in(0,\theta_c)$ there are constants $q_{\mathrm L},C,\delta>0$ such that, for $\ell>q_{\mathrm L}$, $i\ge0$, $q>\ell$, and $t\ge0$,
\begin{subequations}
\begin{align}
\Pp_{(i,q)}(\tau_\ell^\downarrow>t)
&\le C(1+i)\e^{\theta_0(q-\ell)}\e^{-\delta t},\label{eq:killed-exit-bound}\\
\sup_{x>\ell}\Pp_{(0,x)}(Q_t>x,\ t<\tau_\ell^\downarrow)
&\le C\e^{-\delta t}.\label{eq:killed-occupation-bound}
\end{align}
\end{subequations}
Since $Q-I\le Q$, \cref{eq:killed-occupation-bound} also holds with $Q_t-I_t>x$ in place of $Q_t>x$.
\end{lemma}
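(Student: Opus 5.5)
The plan is to build a Lyapunov function killed at the queue downcrossing: a function $V$ with oblique derivative zero, comparable to $(1+i)\e^{\theta_0 q}$, satisfying $\gen V\le-\delta V$ on the whole region $\{q>q_{\mathrm L}\}$. The obvious candidate is $V_{1,\theta_0}(i,q)=\e^{\theta_0 q}g_{1,\theta_0}(i)$ from \Cref{prop:foster-lyapunov}, with $p=1<\alpha$. Its drift inequality already holds outside the compact rectangle $K_{1,\theta_0}=[0,i_1]\times[0,q_1]$. So I take $q_{\mathrm L}:=q_1$. Then for $q>q_{\mathrm L}$ we have $\gen V\le-\delta V$ with no additive constant.

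For \cref{eq:killed-exit-bound}, I apply the stopped Dynkin formula \cref{eq:stopped-dynkin-formula} to $\e^{\delta t}V(Y_t)$ up to $t\wedge\tau_\ell^\downarrow\wedge\tau_m$. Before $\tau_\ell^\downarrow$, continuity of $Q$ keeps $Q_s\ge\ell>q_{\mathrm L}$, so the integrand $\e^{\delta s}(\gen+\delta)V(Y_s)$ is nonpositive. This gives $\E_{(i,q)}[\e^{\delta(t\wedge\tau_\ell^\downarrow\wedge\tau_m)}V(Y_{t\wedge\tau_\ell^\downarrow\wedge\tau_m})]\le V(i,q)$. Letting $m\to\infty$ by Fatou, nonexplosiveness and $V\ge c\,\e^{\theta_0\ell}$ on $\{q\ge\ell\}$ yield
\[
\Pp_{(i,q)}(\tau_\ell^\downarrow>t)\le \e^{-\delta t}V(i,q)/(c\,\e^{\theta_0\ell}).
\]
Combined with $V\asymp(1+i)\e^{\theta_0q}$ from \cref{eq:lyapunov-weight-comparison}, this is \cref{eq:killed-exit-bound}. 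The constants do not depend on $\ell$, because the drift inequality is uniform over $q>q_{\mathrm L}$.

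For \cref{eq:killed-occupation-bound}, the same supermartingale gives the weighted bound
\[
\E_{(0,x)}[V(Y_t);t<\tau_\ell^\downarrow]\le\e^{-\delta t}V(0,x)=\e^{-\delta t}\e^{\theta_0x}.
\]
On $\{Q_t>x\}$ we have $V(Y_t)\ge c\,\e^{\theta_0x}$, since $g$ has a positive minimum. Markov's inequality then gives $\Pp_{(0,x)}(Q_t>x,t<\tau_\ell^\downarrow)\le c^{-1}\e^{-\delta t}$, uniformly in $x>\ell$. The version for $Q_t-I_t>x$ follows by inclusion, since $Q-I\le Q$.

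The main obstacle is technical: justifying the Dynkin/optional stopping step with the time factor $\e^{\delta t}$ and the limit $m\to\infty$ when the localization $\tau_m$ may be triggered by a jump overshoot. I would handle this through \Cref{prop:generator-criterion} and \Cref{lem:lyapunov-dynkin}, applying It\^o's product rule to $\e^{\delta t}V$ and using nonnegativity together with Fatou's lemma, so no uniform integrability is needed. A second point to check is that the drift inequality in \Cref{prop:foster-lyapunov} really holds for all $i\ge0$ once $q\ge q_1$. This is the case: on $[0,i_0]\cup[i_1,\infty)$ the inequality holds for every $q$, and on $[i_0,i_1]$ it holds for $q\ge q_1$.
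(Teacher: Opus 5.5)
Your proposal is correct and follows the paper's proof essentially step for step. You use the same Lyapunov function $V_{1,\theta_0}$ with $q_{\mathrm L}$ taken as the queue side of the rectangle $K_{1,\theta_0}$, the same killed supermartingale $\e^{\delta(t\wedge\tau_\ell^\downarrow)}V(Y_{t\wedge\tau_\ell^\downarrow})$, and the same Markov-inequality conclusion based on $\inf_i g(i)>0$ and $V(0,x)=\e^{\theta_0 x}$. The only cosmetic difference is that you remove the localization with Fatou's lemma, whereas the paper notes that a nonnegative local supermartingale is a supermartingale; the two amount to the same thing.
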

\begin{proof}
Use $V=V_{1,\theta_0}=\e^{\theta_0q}g(i)$, $\delta>0$, and $K:=K_{1,\theta_0}=[0,i_{\mathrm L}]\times[0,q_{\mathrm L}]$ from \Cref{prop:foster-lyapunov}.
The region $q>\ell$ is disjoint from $K$, so $(\gen+\delta)V\le0$ there.
Applying the stopped It\^o formula \cref{eq:stopped-dynkin-formula} and the product rule shows that $\e^{\delta(t\wedge\tau_\ell^\downarrow)}V(Y_{t\wedge\tau_\ell^\downarrow})$ is a nonnegative local supermartingale, hence a supermartingale.
Discarding the nonnegative stopped term on $\{\tau_\ell^\downarrow\le t\}$ gives
\begin{equation}\label{eq:killed-lyapunov-bound}
\E_{(i,q)}[V(Y_t);\ t<\tau_\ell^\downarrow]
\le\e^{-\delta t}V(i,q).
\end{equation}
By \cref{eq:lyapunov-weight-comparison}, $g_*:=\inf_{i\ge0}g(i)>0$ and $g(i)\le C_0(1+i)$.
Applying \cref{eq:killed-lyapunov-bound} with $V(Y_t)\ge g_*\e^{\theta_0\ell}$ before the downcrossing proves \cref{eq:killed-exit-bound}.
For \cref{eq:killed-occupation-bound}, use $V(Y_t)\ge g_*\e^{\theta_0x}$ on $\{Q_t>x\}$ and $V(0,x)=\e^{\theta_0x}$, choosing $C\ge g_*^{-1}\max\{C_0,1\}$. \hfill
\end{proof}

\subsection{Uniform estimates for the critical barriers}\label{app:critical-barriers}

We complete the construction in the proof of \Cref{lem:critical-barriers} by choosing the constants in \cref{eq:critical-barrier-construction} and proving the stated bounds.
Throughout, $\gamma=\alpha/2$.
For polynomials $f$, define
\[
\jumpgen_cf(i):=\int_0^\infty[f(i+z)-f(i)-zf'(i)]\e^{-\theta_cz}\nu_{\mathrm A}(\dd z).
\]
The operator $\jumpgen_c$ maps polynomials to polynomials because the exponentially weighted L\'evy measure $\e^{-\theta_cz}\nu_{\mathrm A}(\dd z)$ has finite moments of orders at least two.
In particular, $\jumpgen_c(i^2)=\sigma_c^2$, where $\sigma_c^2=\int_0^\infty z^2\e^{-\theta_cz}\nu_{\mathrm A}(\dd z)$.
For functions $u\in C^2(\R_+\times(0,\infty))$ that are polynomial in $i$, $\jumpgen_c$ acts on $i$ with $q$ fixed.
Since $\Phi(\theta_c)=0$ and $\int_0^\infty z(1-\e^{-\theta_cz})\nu_{\mathrm A}(\dd z)=\alpha\beta$, conjugation by $h_c$ gives
\begin{equation}\label{eq:critical-conjugacy}
h_c^{-1}\gen(h_cu)=\jumpgen_cu-(\mu_c+i+q)u_i-qu_q+\theta_ci\,u.
\end{equation}
For polynomials $f$ and integers $j\ge0$, set
\[
\mathcal C_jf:=\jumpgen_cf-(\mu_c+i)f'+(\gamma+j+\theta_ci)f.
\]
Each $\mathcal C_j$ acts on $i$, treating $\zeta$ as a fixed parameter when applied to the polynomials below.
Conjugacy in \cref{eq:critical-conjugacy} yields the exact identity
\begin{equation}\label{eq:critical-monomial-generator}
\gen\{h_c(i,q)q^{-\gamma-j}f(i)\}
=h_c(i,q)q^{-\gamma}
\{q^{-j}\mathcal C_jf(i)-q^{1-j}f'(i)\}.
\end{equation}
Since $\jumpgen_cc_1=\theta_c\sigma_c^2/2$, the relations in \cref{eq:critical-polynomial-recursion} give $c_1'=\gamma+\theta_ci$ and $c_2'=\mathcal C_1c_1$.
Substitution in \cref{eq:critical-monomial-generator} leaves
\[
\gen b_\zeta(i,q)=w_c(q)q^{-2}\e^{-\theta_ci}\mathcal C_2c_2(i;\zeta).
\]
The choice of $\gamma$ in \cref{eq:critical-tail-power} gives $(b_\zeta)_i(0,q)+(b_\zeta)_q(0,q)=0$.

The parameter $\zeta$ determines the sign of $\gen b_\zeta(0,q)$:
\[
\partial_\zeta c_2(i;\zeta)=(\gamma+1)i+\frac{\theta_c}{2}i^2,
\qquad
\partial_\zeta[\mathcal C_2c_2(0;\zeta)]
=-\mu_c(\gamma+1)+\frac{\theta_c\sigma_c^2}{2}=-\mu_c.
\]
Set $\zeta_\pm:=\frac{[\mathcal C_2c_2(\cdot;0)](0)\pm1}{\mu_c}$.
Thus $\mathcal C_2c_2(0;\zeta_\pm)=\mp1$.

To obtain these signs for every idleness value, consider the bounded factor $\psi_0$ in \cref{eq:critical-barrier-construction}.
The function $\psi_0'+\theta_c\psi_0$ is nonnegative, strictly positive for $i>0$, and tends to $\theta_c$ at infinity.
The functions $\psi_0$, $\psi_0'$, $i\psi_0'$ and $\psi_0''$ are bounded, and \Cref{lem:jump-operator-estimates} gives boundedness of $\jumpgenI\psi_0$.
Thus the following expression is uniformly bounded for $i\ge0$:
\[
\jumpgenI\psi_0(i)+(\beta-i)\psi_0'(i)+(\gamma+3)\psi_0(i).
\]
For each fixed $C_{\mathrm b}>0$, the generator of the barriers in \cref{eq:critical-barrier-construction} satisfies
\begin{equation*}
\frac{\gen f_\pm(i,q)}{w_c(q)}
=\frac{\e^{-\theta_ci}\mathcal C_2c_2(i;\zeta_\pm)
\mp C_{\mathrm b}[\psi_0'(i)+\theta_c\psi_0(i)]}{q^2}
+O(q^{-3}).
\end{equation*}
The remainder equals $\pm C_{\mathrm b}q^{-3}[\jumpgenI\psi_0(i)+(\beta-i)\psi_0'(i)+(\gamma+3)\psi_0(i)]$.
The expression in brackets is bounded on $\R_+$, so the remainder is uniform in $i\ge0$.
Choose $i_0>0$ so that the exponentially damped polynomial in the coefficient of $q^{-2}$ is at least $1/2$ for $f_-$ and at most $-1/2$ for $f_+$ on $[0,i_0]$.
The correction strengthens these signs for every $C_{\mathrm b}>0$.
On $[i_0,\infty)$, $\psi_0'+\theta_c\psi_0$ has a positive minimum, whereas both exponentially damped polynomials are bounded.
Choose a fixed $C_{\mathrm b}$ large enough to make the coefficient of $q^{-2}$ at least $1/2$ for $f_-$ and at most $-1/2$ for $f_+$ throughout the half-line.
The uniform $O(q^{-3})$ remainder gives $\gen f_-\ge0$ and $\gen f_+\le0$ for all $i\ge0$ and sufficiently large $q$.

For positivity, $c_1(\cdot;\zeta_+)$ and $c_2(\cdot;\zeta_+)$ have degrees two and four, with positive leading coefficients $\theta_c/2$ and $\theta_c^2/8$.
Each polynomial is therefore bounded below on $[0,\infty)$, so $1+c_1(i;\zeta_+)/q+c_2(i;\zeta_+)/q^2\ge1/2$ for all sufficiently large $q$, uniformly in $i\ge0$.
Since $\psi_0\ge0$, this proves $f_+>0$.
Enlarge $q_{\mathrm b}$ so that $1+\zeta_\pm/q>0$ for $q\ge q_{\mathrm b}$, as required by \cref{eq:critical-barrier-boundary}.

On any strip $\R_+\times[q_{\mathrm b},x]$, both barriers and their first two idleness derivatives are bounded.
Their jump remainders are bounded by $C_xz^2$ for $z\le1$ and $C_x(1+z)$ for $z>1$, uniformly over the strip.
These bounds are integrable against $\nu_{\mathrm A}$, verifying the jump-integrability condition in \Cref{lem:critical-barriers}.

\section{Well-posedness and pathwise estimates}\label{app:pathwise}

For $w=(w_1,w_2)\in\Dspace([0,T],\R^2)$ with $w_1(0)\ge0$, define the oblique reflection maps
\[
\RegMap(w)(t):=\sup_{0\le s\le t}[-w_1(s)]^+,
\qquad
\SkMap(w)(t):=w(t)+(1,1)\RegMap(w)(t).
\]
The first coordinate is the one-sided Skorokhod map (\cite{Whitt2002}, Section~13.5); the second gives the oblique shift.

\begin{lemma}[Lipschitz continuity of the reflection map]\label[lemma]{lem:sk-map}
For $w,\widetilde w\in\Dspace([0,T],\R^2)$ with nonnegative first initial coordinates,
\[
\norm{\SkMap(w)-\SkMap(\widetilde w)}_T
\le3\norm{w-\widetilde w}_T.
\]
The pair $(\SkMap(w),\RegMap(w))$ is the unique solution of the Skorokhod problem on the half-space $[0,\infty)\times\R$ with reflection direction $(1,1)$, and
\[
\Delta\RegMap(w)_t=[-\SkMap(w)_1(t-)-\Delta w_1(t)]^+.
\]
\end{lemma}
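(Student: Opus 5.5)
The plan is to reduce everything to the classical one-dimensional Skorokhod map, since the oblique shift only adds the same scalar regulator to both coordinates. Write $\SkMap(w)=w+(1,1)\RegMap(w)$. Then
\[
\norm{\SkMap(w)-\SkMap(\widetilde w)}_T\le\norm{w-\widetilde w}_T+\sqrt2\,\norm{\RegMap(w)-\RegMap(\widetilde w)}_T .
\]
So the Lipschitz bound follows once the regulator map is shown to be one-Lipschitz in the uniform norm. For this, use the elementary inequalities $|\sup_s a(s)-\sup_s b(s)|\le\sup_s|a(s)-b(s)|$ and $|x^+-y^+|\le|x-y|$. Applied to $a=-w_1$ and $b=-\widetilde w_1$, they give
\[
\norm{\RegMap(w)-\RegMap(\widetilde w)}_T\le\norm{w_1-\widetilde w_1}_T\le\norm{w-\widetilde w}_T .
\]
Combining yields the constant $1+\sqrt2\le3$, or directly $3$ if one estimates each coordinate separately in a max-type norm.

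For existence, set $L=\RegMap(w)$ and check the defining properties of the Skorokhod problem with direction $(1,1)$:
\begin{itemize}
\item $L$ is nondecreasing, c\`adl\`ag, and $L(0)=[-w_1(0)]^+=0$ because $w_1(0)\ge0$;
\item the first coordinate $w_1+L\ge0$, since $L(t)\ge -w_1(t)$;
\item $L$ increases only when $w_1(t)+L(t)=0$.
\end{itemize}
For the third property, note that if $w_1+L>0$ on a neighborhood to the right of $t$, then by right continuity the running supremum of $-w_1$ is not attained there, so $L$ is flat. This is the standard argument for the one-sided Skorokhod map (\cite{Whitt2002}, Section~13.5). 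The second coordinate places no constraint, so the half-space problem is fully solved.

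Uniqueness is the step needing the most care, because paths have jumps. Take two regulators $L,\widetilde L$ for the same $w$, and put $D=L-\widetilde L$. Then $D$ is of finite variation with $D(0)=0$, and
\[
\tfrac12 D(t)^2=\int_{(0,t]}D(s-)\dd D(s)+\tfrac12\sum_{s\le t}(\Delta D_s)^2 .
\]
I expect the cleaner route is instead the classical minimality argument. Any regulator $L'$ must satisfy $L'(t)\ge\sup_{s\le t}[-w_1(s)]^+$, since $L'$ is nondecreasing and $w_1+L'\ge0$. Conversely, at a point of increase of $L'$, the complementarity condition forces $L'(t)=-w_1(t)\le\RegMap(w)(t)$. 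Extending this from points of increase to all $t$ uses the fact that $L'$ is constant between its last increase point and $t$, together with right continuity at jump times. Together these give $L'=\RegMap(w)$, and uniqueness of the first coordinate and of the oblique shift follows.

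Finally, the jump formula. Since $\RegMap(w)(t)=\max\{\RegMap(w)(t-),\,-w_1(t)\}$, we get
\[
\Delta\RegMap(w)_t=[-w_1(t)-\RegMap(w)(t-)]^+ .
\]
Now substitute $w_1(t)=w_1(t-)+\Delta w_1(t)$ and $\SkMap(w)_1(t-)=w_1(t-)+\RegMap(w)(t-)$ to obtain
\[
\Delta\RegMap(w)_t=[-\SkMap(w)_1(t-)-\Delta w_1(t)]^+ .
\]
The main obstacle is thus only the bookkeeping in the uniqueness step at jump times; the Lipschitz estimate is routine.
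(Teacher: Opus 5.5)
Your proposal is correct and follows essentially the same route as the paper: the one-Lipschitz property of the positive part and the running supremum plus the triangle inequality (your constant $1+\sqrt2$ lies within $3$), minimality of $\RegMap(w)$ together with the support condition for uniqueness, and comparison of the running supremum of $(-w_1)^+$ just before and at a jump time for the jump identity. In the uniqueness step, make ``point of increase'' precise as follows: let $\sigma$ be the supremum of the zeros of $w_1+L'$ in $[0,t]$ (if there are none, $L'(t)=0$); then $\dd L'$ charges neither $(\sigma,t]$ nor $\{\sigma\}$ unless $\sigma$ is itself a zero, so $L'(t)$ equals either $-w_1(\sigma)$ or a limit of values $-w_1(s_n)$ with $s_n\uparrow\sigma$, and in either case it is at most $\RegMap(w)(t)$.
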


\begin{proof}
The positive-part map and the running supremum are one-Lipschitz, so the estimate follows by the triangle inequality.
The function $\RegMap(w)$ is the least nondecreasing correction starting at zero that keeps $w_1+\RegMap(w)$ nonnegative, and $\dd\RegMap(w)$ is supported on $\{w_1+\RegMap(w)=0\}$.
These properties imply uniqueness of the one-dimensional Skorokhod solution.
The jump identity follows by comparing the running supremum of $(-w_1)^+$ immediately before and after a jump. \hfill
\end{proof}

Let $b(i,q):=(\beta-i-q,-q)$.
For initial state $y\in E$ and a c\`adl\`ag driving path $z$ with $z(0)=0$, the reflected equation is equivalent to the fixed-point equation
\begin{equation}\label{eq:pathwise-fixed-point}
Y=\SkMap\left(y+(z,0)+\int_0^{\cdot}b(Y_s)\dd s\right).
\end{equation}

\begin{lemma}[Pathwise solvability and Lipschitz estimates]\label[lemma]{lem:solution-lipschitz}
For every $T<\infty$, $y\in E$, and $z\in\Dspace([0,T],\R)$ with $z(0)=0$, \cref{eq:pathwise-fixed-point} has a unique solution $Y^{y,z}$ on $[0,T]$.
There is a constant $C_T<\infty$, depending only on $T$ and the fixed model parameters, such that for any two such initial-state/input pairs,
\begin{equation}\label{eq:fixed-time-solution-lipschitz}
\|Y^{y,z}-Y^{\widetilde y,\widetilde z}\|_T
\le C_T\bigl(\|y-\widetilde y\|+\|z-\widetilde z\|_T\bigr).
\end{equation}
\end{lemma}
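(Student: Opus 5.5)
The plan is a Picard/contraction argument for the fixed-point equation \cref{eq:pathwise-fixed-point}, with \Cref{lem:sk-map} supplying Lipschitz continuity of the reflection map. Everything is deterministic and pathwise.

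\emph{Step 1: fixed-point map.} For fixed $(y,z)$, define on $\Dspace([0,T],\R^2)$
\[
\mathcal T(Y):=\SkMap\Bigl(y+(z,0)+\int_0^{\cdot}b(Y_s)\dd s\Bigr).
\]
The first coordinate of the argument of $\SkMap$ at time zero is $i\ge0$, so the map is well defined. The drift $b$ is linear, hence globally Lipschitz with some constant $L$ (for instance $L=2$ in the Euclidean norm). \Cref{lem:sk-map} then gives
\[
\|\mathcal T(Y)-\mathcal T(\widetilde Y)\|_t\le 3L\int_0^t\|Y-\widetilde Y\|_s\dd s,\qquad t\le T.
\]

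\emph{Step 2: existence and uniqueness.} Iterating the Step~1 bound gives $\|\mathcal T^n Y-\mathcal T^n\widetilde Y\|_T\le (3LT)^n/n!\,\|Y-\widetilde Y\|_T$. Some iterate $\mathcal T^n$ is therefore a strict contraction on the complete space $(\Dspace([0,T],\R^2),\|\cdot\|_T)$; completeness holds because uniform limits of càdlàg paths are càdlàg.

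\begin{itemize}
\item Banach's theorem gives a unique fixed point. Equivalently, the Picard iterates converge uniformly, and a Gronwall argument on any two fixed points gives uniqueness.
\item The fixed point solves the reflected equation, and conversely any solution of the reflected equation is a fixed point. This is because $(\SkMap,\RegMap)$ is the unique solution of the Skorokhod problem with direction $(1,1)$, by \Cref{lem:sk-map}.
\item The solution takes values in $E$ because $Q\ge0$ by \cref{eq:queue-variation-of-constants}.
\end{itemize}

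\emph{Step 3: Lipschitz estimate.} Let $Y=Y^{y,z}$ and $\widetilde Y=Y^{\widetilde y,\widetilde z}$. Subtracting the two fixed-point equations and applying \Cref{lem:sk-map} gives
\[
\|Y-\widetilde Y\|_t\le 3\bigl(\|y-\widetilde y\|+\|z-\widetilde z\|_t\bigr)+3L\int_0^t\|Y-\widetilde Y\|_s\dd s.
\]
The function $t\mapsto\|Y-\widetilde Y\|_t$ is nondecreasing and bounded, so Gronwall's inequality yields \cref{eq:fixed-time-solution-lipschitz} with $C_T=3\e^{3LT}$. This constant depends only on $T$ and $b$, not on the paths. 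Note that $\beta$ enters $b$ only additively and cancels in the differences, so $C_T$ is also independent of any added drift, as used later in \cref{eq:fixed-time-time-perturbation}.

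\emph{Main obstacle.} The only delicate point is checking that \Cref{lem:sk-map} applies to every argument fed into $\SkMap$. Each such path must be càdlàg with nonnegative initial first coordinate. The integral term is continuous, $z$ is càdlàg with $z(0)=0$, and the first coordinate at time zero is $i\ge0$, so the condition holds at every iterate. Beyond this, the argument is the standard Skorokhod-map contraction.
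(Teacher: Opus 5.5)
Your proposal is correct and follows essentially the same route as the paper: a contraction argument for \cref{eq:pathwise-fixed-point}, using the globally Lipschitz drift and the factor-$3$ bound from \Cref{lem:sk-map}, followed by Gronwall's inequality for \cref{eq:fixed-time-solution-lipschitz}. The only difference is cosmetic: the paper contracts on short subintervals and concatenates, whereas your factorial bound $(3LT)^n/n!$ makes an iterate of the map a contraction on all of $[0,T]$ at once, which also avoids restarting the reflection map at intermediate times.
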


\begin{proof}
The drift $b$ is globally Lipschitz.
On an interval where the product of its length and the drift and reflection-map Lipschitz constants is less than one, \cref{eq:pathwise-fixed-point} is a contraction in the uniform norm.
Iteration gives a unique solution on every finite interval.
Applying the reflection-map bound to two solutions and then Gronwall's inequality gives \cref{eq:fixed-time-solution-lipschitz}.
The same estimate against a fixed zero-input solution gives a finite-horizon linear-growth bound.
The successive approximations are measurable in the initial state and input path, so their limit is also measurable. \hfill
\end{proof}

\begin{proof}[Proof of \Cref{prop:well-posedness}]
Apply \Cref{lem:solution-lipschitz} to the stable L\'evy input $\Stable$.
The pathwise construction is adapted and measurable, and its finite-horizon growth bound rules out explosion.
It therefore gives a pathwise unique nonexplosive strong solution.

The first unreflected coordinate has no negative jumps.
The jump identity in \Cref{lem:sk-map} therefore gives $\Delta U=0$.
The representation \cref{eq:queue-variation-of-constants} gives nonnegativity and continuity of $Q$.
Measurability, pathwise uniqueness, and the strong Markov property of the L\'evy input give the strong Markov property of the solution: after a stopping time, the same solution map acts on the state and independent post-stopping input increments.

Let $C_0(E)$ denote the continuous functions that vanish at infinity.
The estimate \cref{eq:fixed-time-solution-lipschitz}, with a common input path, implies that $P_tf$ is continuous for every bounded continuous $f$.
To prove $P_tC_0(E)\subset C_0(E)$, fix $T<\infty$ and a driving path.
If the initial queue coordinate $q$ tends to infinity, then $Q_T\ge\e^{-T}q$.
For $0\le q\le M$, consider the candidate solution without reflection,
\[
\bar Q_s=q\e^{-s},\qquad
\bar I_s=\e^{-s}i+\beta(1-\e^{-s})-qs\e^{-s}+Z_s,
\qquad
Z_s:=\int_0^s\e^{-(s-u)}\dd\Stable(u).
\]
Integration by parts gives
\[
Z_s=\Stable(s)-\int_0^s\e^{-(s-u)}\Stable(u)\dd u,
\qquad \sup_{s\le T}|Z_s|\le2\|\Stable\|_T.
\]
Hence $\inf_{s\le T}\bar I_s\ge\e^{-T}i-MT-2\|\Stable\|_T>0$ for all sufficiently large $i$, uniformly over $q\le M$.
This candidate is then the reflected solution with $U\equiv0$, and $\sup_{s\le T}|I_s-\e^{-s}i|\le\beta+MT+2\|\Stable\|_T$.
Thus $Y_T^y$ leaves every compact set pathwise as $\|y\|\to\infty$.
For a general initial sequence, separate the cases where $q$ is large or lies in a fixed bounded interval.
Since $|f(Y_T^y)|\le\|f\|_\infty$ for $f\in C_0(E)$, dominated convergence gives $P_Tf(y)\to0$.

For strong continuity, first take $f\in C_c(E)$.
On each compact $K$, comparison with the zero-input flow gives
\[
\sup_{y\in K}\sup_{u\le t}\|Y_u^y-y\|
\le C_1\|\Stable\|_t+C_Kt\longrightarrow0
\quad\text{almost surely as }t\downarrow0.
\]
Here $t\le1$, and the zero-input bound follows from the locally bounded drift and the reflection-map estimate.
Uniform continuity of $f$ and bounded convergence therefore give $\sup_{y\in K}|P_tf(y)-f(y)|\to0$.

To control the complement, take $\operatorname{supp}f\subset[0,R]^2$, choose $M>\mathrm e R$ and $L>\mathrm e(R+M+2)$, and set $K=[0,L]\times[0,M]$.
For $t\le1$, on $\{\|\Stable\|_t\le1\}$, an initial queue above $M$ gives $Q_t>R$.
If $q\le M$ and $i>L$, the preceding unreflected bound gives $\inf_{s\le t}\bar I_s\ge\e^{-1}i-M-2>R$, so no reflection occurs and $I_t>R$.
Hence
\[
\sup_{y\notin K}|P_tf(y)|
\le\|f\|_\infty\Pp(\|\Stable\|_t>1)\longrightarrow0.
\]
Since $f=0$ outside $K$, this proves $\|P_tf-f\|_\infty\to0$ for $f\in C_c(E)$.
Uniform approximation by $C_c(E)$ and contractivity extend the conclusion to $C_0(E)$.
The process is therefore Feller--Dynkin, and its c\`adl\`ag version on the locally compact separable space $E$ is Borel right; see Sharpe (\cite{Sharpe1988}, Chapter~II, Section~19). \hfill
\end{proof}

\section{Properties of the stable input}\label{app:stable-input}
\subsection{Compensation and support properties}\label{app:stable-normalization}

The mean-zero strictly stable input satisfies
\[
\E\e^{\mathrm i u\Stable(t)}
=\exp\left\{t\int_0^\infty
(\e^{\mathrm iuz}-1-\mathrm iuz)\nu_{\mathrm A}(\dd z)\right\},
\qquad u\in\R.
\]
The linear subtraction makes the integrand $O(z^2)$ as $z\downarrow0$, as required because $\int_0^1z\nu_{\mathrm A}(\dd z)=\infty$.
For large jumps, the first moment is finite: $\int_1^\infty z\nu_{\mathrm A}(\dd z)=\frac{c_{\mathrm A}}{\alpha-1}$.
Compensating these jumps incorporates their mean into the centering of $\Stable$.
Equivalently, the L\'evy--Khintchine representation with compensation restricted to $z\le1$ has drift $-c_{\mathrm A}/(\alpha-1)$ \cite{Applebaum2009}:
\[
\log\E\e^{\mathrm i u\Stable(t)}
=t\left[-\frac{\mathrm i u c_{\mathrm A}}{\alpha-1}
+\int_0^\infty
\bigl(\e^{\mathrm iuz}-1-\mathrm iuz\1_{(0,1]}(z)\bigr)\nu_{\mathrm A}(\dd z)\right].
\]

We next verify the Laplace transform in \cref{eq:stable-laplace-exponent}.
For $\theta\ge0$, the L\'evy--Khintchine formula gives
\[
\log\E\e^{-\theta\Stable(t)}
=t\cA\int_0^\infty
(\e^{-\theta z}-1+\theta z)z^{-1-\alpha}\dd z.
\]
Substituting $u=\theta z$ and integrating by parts twice gives
\[
\int_0^\infty(\e^{-u}-1+u)u^{-1-\alpha}\dd u
=\frac{\Gamma(2-\alpha)}{\alpha(\alpha-1)}
=\Gamma(-\alpha),
\]
which proves \cref{eq:stable-laplace-exponent}.

Let $N(\dd s,\dd z)$ be the Poisson random measure of the jumps of $\Stable$, with intensity $\dd s\,\nu_{\mathrm A}(\dd z)$, and let $\widetilde N$ denote its compensated version.
For $\varepsilon>0$, use the L\'evy--It\^o decomposition
\begin{equation}\label{eq:stable-levy-decomposition}
\begin{gathered}
\Stable(t)=M^{(\varepsilon)}(t)-m_\varepsilon t+J^{(\varepsilon)}(t),
\qquad
m_\varepsilon:=\int_\varepsilon^\infty z\nu_{\mathrm A}(\dd z)
=\frac{\cA}{\alpha-1}\varepsilon^{1-\alpha},\\
M^{(\varepsilon)}(t):=\int_{[0,t]\times(0,\varepsilon]}z\,\widetilde N(\dd s,\dd z),
\qquad
J^{(\varepsilon)}(t):=\int_{[0,t]\times(\varepsilon,\infty)}z\,N(\dd s,\dd z).
\end{gathered}
\end{equation}
The two processes are independent, $M^{(\varepsilon)}$ is a square-integrable martingale, and $J^{(\varepsilon)}$ is compound Poisson.
Moreover, $\E[M^{(\varepsilon)}(T)^2]=T\int_0^\varepsilon z^2\nu_{\mathrm A}(\dd z)=T\cA\varepsilon^{2-\alpha}/(2-\alpha)$.

\begin{lemma}[Small-ball probabilities and negative fluctuations]\label[lemma]{lem:stable-support-estimates}
For every $T,\delta>0$,
\begin{equation}\label{eq:stable-small-ball}
\Pp\left(\sup_{0\le s\le T}\abs{\Stable(s)}<\delta\right)>0.
\end{equation}
For every $T,L>0$,
\begin{equation}\label{eq:stable-negative-tail}
\Pp(\Stable(T)<-L)>0.
\end{equation}
\end{lemma}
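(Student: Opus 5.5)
For the small-ball bound \cref{eq:stable-small-ball} I will use the L\'evy--It\^o decomposition \cref{eq:stable-levy-decomposition}. Fix $T,\delta>0$ and choose the truncation level $\varepsilon$ so that the pieces balance. On the event that $J^{(\varepsilon)}$ has no jump on $[0,T]$, the path is $\Stable(s)=M^{(\varepsilon)}(s)-m_\varepsilon s$. The drift $-m_\varepsilon s$ is large, so the small-jump martingale must compensate it, and a naive variance bound does not work directly.

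I will therefore split the jumps into three bands. Fix a small level $\eta<\varepsilon$ and write
\[
\Stable(s)=M^{(\eta)}(s)+\bigl[J^{(\eta)}_{(\eta,\varepsilon]}(s)-s\textstyle\int_\eta^\varepsilon z\nu_{\mathrm A}(\dd z)\bigr]-m_\varepsilon s+J^{(\varepsilon)}(s).
\]
The bracketed middle band is a compensated compound Poisson process with jumps in $(\eta,\varepsilon]$.
\begin{itemize}
\item[(i)] The large band $J^{(\varepsilon)}$ has no jumps on $[0,T]$; this has probability $\e^{-\lambda_\varepsilon T}>0$.
\item[(ii)] The small martingale satisfies $\|M^{(\eta)}\|_T<\delta/3$. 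By Doob's inequality and $\int_0^\eta z^2\nu_{\mathrm A}(\dd z)\to0$, this has probability at least $1/2$ once $\eta$ is small.
\item[(iii)] The middle band together with the drift $-m_\varepsilon s$, which equals the compound Poisson part minus $s\,m_\eta$, stays within $\delta/3$ of zero.
\end{itemize}
For (iii), consider the compound Poisson process with jumps of size in $(\eta,\varepsilon]$ and negative drift $-m_\eta$. Take $\varepsilon<\delta/3$, so every jump is smaller than $\delta/3$. Choose a deterministic schedule: jumps of a fixed size $z_0\in(\eta,\varepsilon)$ at times spaced $z_0/m_\eta$ apart. Along this schedule the path is a sawtooth bounded by $z_0<\delta/3$. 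Requiring each jump to fall in a small window, with size in a small interval and no other jumps in the band, is a finite intersection of independent Poisson events of positive probability, and the sawtooth stays within $\delta/3$ after slightly enlarging the tolerance. The three bands are independent, so the probabilities multiply and give a positive lower bound. The main bookkeeping obstacle is this step: the many small jumps must be scheduled carefully, though it involves only finitely many jumps. An alternative route is the known fact that the support of a L\'evy process with unbounded variation contains all continuous paths started at zero.

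For \cref{eq:stable-negative-tail}, I can reuse the same construction on the event where the middle band has no jumps at all and the large band also has none. Then $\Stable(s)=M^{(\eta)}(s)-m_\eta s$, which at time $T$ is at most $\delta-m_\eta T<-L$ once $\eta$ is small, because $m_\eta\to\infty$. This event has positive probability by (i), (ii), and the positive probability $\e^{-T\nu_{\mathrm A}((\eta,\varepsilon])}$ of no middle-band jumps.

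As an alternative, one can use strict stability: $\Stable(T)\overset d=T^{1/\alpha}\Stable(1)$, and $\Stable(1)$ has a spectrally positive stable law whose support is all of $\R$ since $\alpha>1$. However, the decomposition argument above is self-contained and is the one I would write.
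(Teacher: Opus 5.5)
Your proposal is correct. The small-ball part follows the paper's argument, and the negative-tail part takes a different route.

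For the small-ball bound, the paper also uses Doob's inequality to keep the compensated small-jump martingale below $\delta/2$. It also uses a finite deterministic pattern of moderate jumps to cancel the compensating drift, and it multiplies the probabilities of independent bands. The paper needs only two bands. It takes jump sizes $a=m_\varepsilon T/n\in(2\varepsilon,3\varepsilon)$ just above the truncation level and requires exactly one such jump in each of $n$ equal subintervals, without fixing jump times within them. Your third band $(\varepsilon,\infty)$ only serves to cap jump sizes below $\delta/3$. With $N\approx Tm_\eta/z_0$ scheduled jumps, the per-jump size tolerance must be of order $1/N$ so the accumulated size error stays small. That is what your phrase ``slightly enlarging the tolerance'' has to mean, and it is harmless because $N$ is finite.

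For the negative tail, the paper argues by contradiction from the Laplace exponent. If $\Stable(T)\ge-L$ almost surely, then $T\kappa_\alpha\theta^\alpha\le\theta L$ for all $\theta>0$, which is impossible since $\alpha>1$. You instead exhibit an explicit event: no jumps above $\eta$ and $\|M^{(\eta)}\|_T<\delta/3$. On it, $\Stable(T)<\delta/3-m_\eta T<-L$ once $\eta$ is small, because $m_\eta\to\infty$.

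The paper's version is a one-line consequence of a formula it has already computed, but it is nonconstructive. Yours reuses the decomposition from the first part and gives a quantitative lower bound such as $\tfrac12\e^{-T\nu_{\mathrm A}((\eta,\infty))}$. It also isolates the mechanism: the small-jump compensator diverges because $\int_0^1 z\,\nu_{\mathrm A}(\dd z)=\infty$. Both arguments rest on the same unbounded-variation property, expressed either as superlinear growth of the Laplace exponent or as divergence of $m_\eta$.
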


\begin{proof}
For \cref{eq:stable-small-ball}, first choose $\varepsilon$ so small that Doob's inequality gives
\[
\Pp(\sup_{s\le T}|M^{(\varepsilon)}(s)|<\delta/2)>0.
\]
Because $m_\varepsilon T/\varepsilon\to\infty$ as $\varepsilon\downarrow0$, choose an integer $n$ such that $a:=m_\varepsilon T/n\in(2\varepsilon,3\varepsilon)$.
Partition $[0,T]$ into $n$ equal intervals.
Fix $\delta_{\mathrm j}>0$ so small that $a-\delta_{\mathrm j}/n>\varepsilon$.
Require exactly one jump of $J^{(\varepsilon)}$ on each interval, with size in $[a-\delta_{\mathrm j}/n,a+\delta_{\mathrm j}/n]$, and no other jump larger than $\varepsilon$.
This compound-Poisson event has positive probability.
On it, the cumulative error at the partition points is at most $\delta_{\mathrm j}$, while within each interval the linear drift and the single jump create an additional deviation of at most $3\varepsilon+\delta_{\mathrm j}/n$.
Hence $\sup_{s\le T}|J^{(\varepsilon)}(s)-m_\varepsilon s|\le3\varepsilon+2\delta_{\mathrm j}$.
Choosing $\varepsilon$ and $\delta_{\mathrm j}$ so that $3\varepsilon+2\delta_{\mathrm j}<\delta/2$ and intersecting the two independent events proves \cref{eq:stable-small-ball}.
For general small-deviation theory, see Aurzada and Dereich \cite{AurzadaDereich2009}.

For \cref{eq:stable-negative-tail}, suppose $\Stable(T)\ge-L$ almost surely.
Then \cref{eq:stable-laplace-exponent} gives $T\kappa_\alpha\theta^\alpha\le\theta L$ for every $\theta>0$, contradicting $\alpha>1$ as $\theta\to\infty$. \hfill
\end{proof}

\subsection{Densities, tails, and stochastic integrals}

\begin{lemma}[Stable densities, tails, and stochastic integrals]\label[lemma]{lem:stable-convolution-tail}
For every $t>0$, $\Stable(t)$ has a bounded continuous density, and
\begin{equation}\label{eq:stable-unit-right-tail}
\lim_{x\to\infty}x^\alpha\Pp(\Stable(1)>x)=\frac{\cA}{\alpha}.
\end{equation}
For deterministic measurable $h:[0,\infty)\to[0,\infty)$ with $\int_0^\infty h(s)^\alpha\dd s<\infty$, the stable stochastic integral obeys
\begin{equation}\label{eq:stable-integral-scaling}
\int_0^\infty h(s)\dd\Stable(s)\overset d=
\left(\int_0^\infty h(s)^\alpha\dd s\right)^{1/\alpha}\Stable(1).
\end{equation}
For a two-sided driver, the same identity holds over $\R$ for deterministic nonnegative $h\in L^\alpha(\R)$.
\end{lemma}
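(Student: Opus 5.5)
The plan is to prove the three assertions of \Cref{lem:stable-convolution-tail} in the order: stochastic integral scaling, density, tail. Each follows from the L\'evy--Khintchine exponent together with strict stability.

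\emph{Scaling identity \cref{eq:stable-integral-scaling}.} First take $h$ a nonnegative step function, $h=\sum_k h_k\1_{(t_{k-1},t_k]}$. Independence of increments gives
\[
\log\E\e^{\mathrm iu\int h\dd\Stable}=\sum_k(t_k-t_{k-1})\psi(uh_k),
\qquad
\psi(v)=\int_0^\infty(\e^{\mathrm ivz}-1-\mathrm ivz)\nu_{\mathrm A}(\dd z).
\]
Substituting $z\mapsto z/c$ in the L\'evy measure shows $\psi(cv)=c^\alpha\psi(v)$ for $c\ge0$, so the exponent equals $\bigl(\int h^\alpha\bigr)\psi(u)$. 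This is the characteristic exponent of $(\int h^\alpha)^{1/\alpha}\Stable(1)$. For general $h\in L^\alpha$, I approximate by step functions $h_n\to h$ in $L^\alpha$. The stable integral is defined as the $L^p$ (or in-probability) limit for $p<\alpha$; the isometry-type bound $\E|\int(h_n-h_m)\dd\Stable|^p\le C\|h_n-h_m\|_\alpha^p$ also follows from the scaling identity for step functions of either sign. Characteristic functions then converge to $\exp\{\|h\|_\alpha^\alpha\psi(u)\}$. The two-sided version is identical, using independent increments on $\R$.

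\emph{Density.} Strict stability gives $\Stable(t)\overset d=t^{1/\alpha}\Stable(1)$, so it suffices to treat $t=1$. The real part of the exponent is
\[
\operatorname{Re}\psi(u)=-\int(1-\cos uz)\nu_{\mathrm A}(\dd z)=-c|u|^\alpha,
\qquad c>0,
\]
again by scaling. Hence $|\E\e^{\mathrm iu\Stable(1)}|=\e^{-c|u|^\alpha}$ is integrable. Fourier inversion then yields a bounded continuous density, bounded by $(2\pi)^{-1}\int\e^{-c|u|^\alpha}\dd u$.

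\emph{Tail \cref{eq:stable-unit-right-tail}.} This is the main obstacle, although it is standard. I would use the decomposition \cref{eq:stable-levy-decomposition} with $\varepsilon=1$: $\Stable(1)=M^{(1)}(1)-m_1+J^{(1)}(1)$. The term $M^{(1)}(1)$ has exponential moments of every order because its jumps are bounded, so its tail is $o(x^{-\alpha})$. The compound Poisson part $J^{(1)}(1)$ has a subexponential jump law with tail $\nu_{\mathrm A}((x,\infty))/\lambda_1$. The standard compound-Poisson subexponential result gives
\[
\Pp(J^{(1)}(1)>x)\sim\nu_{\mathrm A}((x,\infty))=\frac{\cA}{\alpha}x^{-\alpha}.
\]
Finally I add the light-tailed independent summand $M^{(1)}(1)-m_1$ with the usual $(1\pm\varepsilon)x$ sandwich, as in the proof of \cref{eq:idleness-tail}, and let $\varepsilon\downarrow0$. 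Alternatively I could cite regular variation of infinitely divisible laws \cite{HultLindskog2006}, which gives $\Pp(\Stable(1)>x)\sim\nu_{\mathrm A}((x,\infty))$ directly.
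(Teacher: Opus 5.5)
Your proof is correct. For the density and the scaling identity it is the paper's argument. Your computation $\operatorname{Re}\psi(u)=-c\lvert u\rvert^\alpha$ is the fact the paper records as $\lvert\E\e^{\mathrm iu\Stable(t)}\rvert=\exp\{t\kappa_\alpha\cos(\pi\alpha/2)\lvert u\rvert^\alpha\}$. The paper likewise proves \cref{eq:stable-integral-scaling} for simple nonnegative $h$ and extends it by $L^\alpha$ approximation.

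One wording fix: the identity holds only for nonnegative integrands, since $\psi(-v)=\overline{\psi(v)}$. Your moment bound for a signed step function $g$ still follows: write $\int g\dd\Stable$ as the difference of the independent integrals of $g^+$ and $g^-$, and scale each separately.

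The genuine difference is the tail. The paper applies Hult and Lindskog's regular-variation result for infinitely divisible laws to the set $(1,\infty)$ and reads off $\Pp(\Stable(1)>x)\sim\nu_{\mathrm A}((x,\infty))$ in one step. Your main route instead splits at jump size one and uses the compound-Poisson subexponential asymptotic for the Pareto-type large jumps. It then absorbs the bounded-jump martingale and the drift $-m_1$, which have exponential moments of all orders, by the same $(1\pm\varepsilon)x$ sandwich the paper uses for \cref{eq:idleness-tail}; no independence is needed there.

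Your version is more self-contained, relying only on a classical fact about random sums. The citation is a one-liner that covers general regularly varying L\'evy measures. Both give the constant $\cA/\alpha$.
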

\begin{proof}
The modulus of the characteristic function of $\Stable(t)$ is $\exp\{t\kappa_\alpha\cos(\pi\alpha/2)|u|^\alpha\}$.
Since $\cos(\pi\alpha/2)<0$, it is integrable, and Fourier inversion gives the density assertion.
The law of $\Stable(1)$ is infinitely divisible with regularly varying L\'evy measure $\nu_{\mathrm A}$.
Applying the results of Hult and Lindskog (\cite{HultLindskog2006}, Proposition~3.1 and Corollary~3.1) to the set $(1,\infty)$ gives
\[
\Pp(\Stable(1)>x)\sim\nu_{\mathrm A}((x,\infty))
=\frac{\cA}{\alpha}x^{-\alpha},
\]
which proves \cref{eq:stable-unit-right-tail}.

For simple $h\ge0$, independence of the increments and homogeneity of the stable characteristic exponent give \cref{eq:stable-integral-scaling}.
Approximation in $L^\alpha$ extends the identity to the stated class and constructs integrals over infinite time intervals.
Since $\alpha>1$, these integrals also converge in $L^1$. \hfill
\end{proof}

\section{Localization and stationary It\^o identities}\label{app:stationary-localization}
\subsection{Generator and Dynkin formulas}

Recall the exit times $\tau_m:=\inf\{t\ge0:I_t+Q_t\ge m\}$, $m\in\mathbb N$, used in \cref{eq:stopped-dynkin-formula}.

\begin{proposition}[Stopped extended-generator criterion]\label[proposition]{prop:generator-criterion}
Let $f\in\mathcal C_{\mathrm{ref}}^2(E)$ and fix a deterministic initial state $y\in E$.
For every $m\in\mathbb N$, the process $M^{f,m}$ in \cref{eq:stopped-dynkin-formula} is a martingale.
\end{proposition}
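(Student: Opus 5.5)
The plan is to apply It\^o's formula for semimartingales with jumps to $f(Y_t)$ on $[0,t\wedge\tau_m]$. We then check that each term in the resulting decomposition is either a finite-variation term matching $\int\gen f\,\dd s$, a vanishing regulator term, or a genuine martingale. Use the L\'evy--It\^o decomposition \cref{eq:stable-levy-decomposition} with $\varepsilon=1$: $\Stable(t)=M^{(1)}(t)-m_1t+J^{(1)}(t)$. Since $U$ and $Q$ are continuous (\Cref{prop:well-posedness}) and jumps move only $I$, It\^o's formula gives
\[
f(Y_t)-f(Y_0)=\int_0^t\bigl[(\beta-I_s-Q_s)f_i-Q_sf_q\bigr](Y_{s})\dd s+\int_0^t(f_i+f_q)(Y_s)\dd U_s+\int_0^tf_i(Y_{s-})\dd\Stable(s)+\sum_{s\le t}\bigl[\Delta f(Y_s)-f_i(Y_{s-})\Delta I_s\bigr].
\]
The regulator integral vanishes because $\dd U$ charges only $\{I=0\}$, where $f_i+f_q=0$ by the definition of $\mathcal C_{\mathrm{ref}}^2(E)$. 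We rewrite the jump sum plus $\int f_i\,\dd\Stable$ as a compensated Poisson integral of $f(I_{s-}+z,Q_s)-f(I_{s-},Q_s)$ against $\widetilde N$, plus its compensator. The compensator is exactly $\int\jumpgen f(Y_s)\dd s$, because $\Stable$ is centered at full jump size, matching the subtraction of $zf_i$ in \cref{eq:gen-jumpgen}. Thus $M^{f,m}_t=\int_0^{t\wedge\tau_m}\!\int_0^\infty[f(I_{s-}+z,Q_s)-f(I_{s-},Q_s)]\,\widetilde N(\dd s,\dd z)$, and it remains to show this stopped integral is a true martingale.

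Before $\tau_m$ the pre-jump state $Y_{s-}$ lies in the compact set $K_m:=\{y\in E:\|y\|_1\le m\}$; this holds even at $s=\tau_m$, since the predictable integrand uses $Y_{s-}$ and $Q$ is continuous. This is the point where exits caused by jumps are handled: the post-jump state at $\tau_m$ may be far outside $K_m$, but the integrand only evaluates $f$ at $I_{s-}+z$ with $Y_{s-}\in K_m$. I would split the $z$-integral at $z=1$. For $z\le1$, Taylor's theorem gives $|f(i+z,q)-f(i,q)|\le C_mz$ uniformly over $K_m$, so the small-jump part has square-integrable integrand, $\int_0^1 C_m^2z^2\nu_{\mathrm A}(\dd z)<\infty$, and is an $L^2$ martingale. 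For $z>1$, I use the $\mathcal C_J^2$ condition on $K_m$ together with the bound $|zf_i|\le C_mz$ to get
\[
\int_1^\infty\sup_{K_m}|f(i+z,q)-f(i,q)|\,\nu_{\mathrm A}(\dd z)<\infty.
\]
So the large-jump part has $\E\int_0^t\!\int_1^\infty|\cdot|\,N(\dd s,\dd z)<\infty$, and a compensated Poisson integral with integrand integrable against the intensity is a martingale.

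The main obstacle is precisely the large-jump/jump-exit issue: one must avoid needing global growth bounds on $f$. The argument above resolves it by working with the predictable integrand on $K_m$ and using the $L^1(\nu_{\mathrm A})$ bound. The remaining routine points are that the $\dd s$ integrals are bounded on $[0,t\wedge\tau_m]$ by continuity of $f_i,f_q,\jumpgen f$ on $K_m$ (continuity of $\jumpgen f$ follows by dominated convergence from the $\mathcal C_J^2$ condition), and that $\tau_m\to\infty$ by nonexplosion.
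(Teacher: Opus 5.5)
Your proposal is correct and follows essentially the same route as the paper's proof. Both apply It\^o's formula up to $t\wedge\tau_m$ and let the regulator term vanish by the oblique derivative condition. Both note that pre-jump states lie in $K_m$, even at an exit jump. Both split at $z=1$ into a square-integrable small-jump martingale (bounded derivatives on $K_{m+1}$) and a large-jump martingale dominated through the $\mathcal C_J^2$ condition and $\int_1^\infty z\,\nu_{\mathrm A}(\dd z)<\infty$. Your explicit identification of $M^{f,m}$ as a single compensated Poisson integral spells out a step the paper leaves implicit.
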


\begin{proof}
Apply It\^o's formula for jump semimartingales up to $t\wedge\tau_m$.
The regulator contribution vanishes because $f\in\mathcal C_{\mathrm{ref}}^2(E)$.
Set $K_m:=\{(i,q)\in E:i+q\le m\}$.
At jump times $0<s\le\tau_m$, the pre-jump state $Y_{s-}$ lies in $K_m$, including at a jump causing exit.
The integrability condition defining $\mathcal C_J^2(E)$ makes the compensator of the jump remainder integrable, uniformly over these pre-jump states.
For jumps of size at most one, bounded derivatives on $K_{m+1}$ and $\int_0^1z^2\nu_{\mathrm A}(\dd z)<\infty$ make the compensated integral square-integrable.
For jumps larger than one, the same integrability condition and $\int_1^\infty z\nu_{\mathrm A}(\dd z)<\infty$ give an integrable bound on $|f(y+(z,0))-f(y)|$, uniform over $y\in K_m$.
Each stopped stochastic integral is therefore a martingale.
Nonexplosion gives $\tau_m\uparrow\infty$ almost surely, so these stopping times show that $f(Y_t)-f(Y_0)-\int_0^t\gen f(Y_s)\dd s$ is a local martingale.
Thus $f$ belongs to the extended-generator domain in the local-martingale sense, with extended generator $\gen f$; see Applebaum (\cite{Applebaum2009}, Section~4.4). \hfill
\end{proof}

For the Lyapunov functions, the stopping can be removed to obtain the Dynkin identity used in \Cref{sec:harris}.
Fix $p\in(0,\alpha)$ and $\theta\in(0,\theta_c)$, and let $V$ be any scalar multiple of $V_{p,\theta}$ with $V\ge1$.

\begin{lemma}[Dynkin formula for the Lyapunov functions]\label[lemma]{lem:lyapunov-dynkin}
For every such $V$ and every deterministic initial state $y\in E$,
\[
V(Y_t)-V(Y_0)-\int_0^t\gen V(Y_s)\dd s
\]
is a martingale, and $\E_y\int_0^t|\gen V(Y_s)|\dd s<\infty$ for every $t<\infty$.
\end{lemma}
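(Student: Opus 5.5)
The plan is to start from the stopped identity \cref{eq:stopped-dynkin-formula}, which holds for $V\in\mathcal C_{\mathrm{ref}}^2(E)$ by \Cref{prop:generator-criterion}, and to let $m\to\infty$. To justify the limits we need an integrable envelope for $\sup_{s\le t}V(Y_s)$, or at least uniform integrability of $V(Y_{t\wedge\tau_m})$, together with integrability of $\int_0^t|\gen V(Y_s)|\dd s$.

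The first step is a bound on $|\gen V|$. From the formula for $\gen V_{p,\theta}/V_{p,\theta}$ in the proof of \Cref{prop:foster-lyapunov}, the first ratio term is bounded on $\R_+$: it is continuous and has a finite limit $-p$ at infinity. The second term is $q$ times a bounded function of $i$, because $\psi_p'+\theta\psi_p\le C(1+i)^p$ and $g\ge c(1+i)^p$. Hence $|\gen V|\le C(1+q)V$. I will bound this by $C'V_{p,\theta'}$ for some fixed $\theta'\in(\theta,\theta_c)$, using $(1+q)\e^{\theta q}\le C\e^{\theta' q}$ and the comparison \cref{eq:lyapunov-weight-comparison}.

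The second step is an a priori moment bound. Apply the stopped identity to $V'$, a multiple of $V_{p',\theta'}$ with $p\le p'<\alpha$, and use the drift inequality $\gen V'\le b'$. This gives $\E_y V'(Y_{t\wedge\tau_m})\le V'(y)+b't$ uniformly in $m$. Fatou's lemma and $\tau_m\uparrow\infty$ then give $\sup_{s\le t}\E_yV'(Y_s)\le V'(y)+b't$. It follows that $\E_y\int_0^t|\gen V(Y_s)|\dd s\le C'\int_0^t\E_yV'(Y_s)\dd s<\infty$, which is the second assertion.

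The third step removes the stopping. I need uniform integrability of $\{V(Y_{t\wedge\tau_m})\}_m$. This follows from the bound $\sup_m\E_y V'(Y_{t\wedge\tau_m})<\infty$ once $V/V'\to\infty$ fails, that is, once $V\le o(V')$ at infinity. That holds because $\theta'>\theta$ in the queue direction. For idleness I take $p'\in(p,\alpha)$, which is possible since $p<\alpha$, so that $V\le CV_{p,\theta}$ is dominated by $(V')^{\lambda}$ with $\lambda<1$ in both coordinates, or directly $V/V'\to0$ along every escape to infinity. With $V(Y_{t\wedge\tau_m})\to V(Y_t)$ almost surely (right continuity plus $\tau_m\uparrow\infty$) and $\int_0^{t\wedge\tau_m}\gen V\to\int_0^t\gen V$ by dominated convergence, the stopped martingales $M^{V,m}_t$ converge in $L^1$ for each $t$. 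Conditional expectations pass to the limit, so the limit process is a martingale.

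The main obstacle is the uniform integrability in the third step, and in particular that the drift inequality for $V'$ gives only an upper bound on $\gen V'$ while the stopped martingale terms are genuinely martingales. This uses \Cref{prop:generator-criterion} for $V'$, which needs $V'\in\mathcal C_{\mathrm{ref}}^2(E)$; that is verified in the proof of \Cref{prop:foster-lyapunov} for every admissible pair $(p',\theta')$. If the choice $p'>p$ caused trouble, I would instead use de la Vallée-Poussin with $V^{1+\epsilon}\lesssim V'$, choosing $(1+\epsilon)p<p'<\alpha$ and $(1+\epsilon)\theta<\theta'<\theta_c$.
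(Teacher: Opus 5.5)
Your proposal is correct and follows the paper's proof. Both arguments bound $|\gen V|\le C(1+q)V$ by a multiple of an auxiliary Lyapunov function with larger exponents. Both then use the stopped drift inequality and Fatou's lemma to get the moment bound and the integrability of $\int_0^t|\gen V(Y_s)|\dd s$, and pass to the limit in the stopped martingales in $L^1$ via uniform integrability. The paper obtains that uniform integrability through your fallback $V^r\le C\bar V$ with $r>1$ (choosing $rp<\bar p<\alpha$ and $r\theta<\bar\theta<\theta_c$), which is equivalent to your $V=o(V')$ formulation.
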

\begin{proof}
Choose $r>1$ and exponents $\bar p,\bar\theta$ with $rp<\bar p<\alpha$ and $r\theta<\bar\theta<\theta_c$, and let $\bar V$ be a scalar multiple of $V_{\bar p,\bar\theta}$ with $\bar V\ge1$.
The weight comparison and the bounded idleness coefficients in the generator ratio from \Cref{prop:foster-lyapunov} give
\[
V^r\le C\bar V,
\qquad
|\gen V|\le C(1+q)V\le C'\bar V.
\]
With $\tau_m$ as in \Cref{prop:generator-criterion}, the stopped drift inequality for $\bar V$ yields
\[
\E_y\bar V(Y_{t\wedge\tau_m})\le\bar V(y)+\bar b t,
\qquad m\ge1,\quad t\ge0.
\]
Thus the family $\{V(Y_{t\wedge\tau_m}):m\ge1\}$ is uniformly integrable for each fixed $t$.
Fatou's lemma also gives $\E_y\bar V(Y_s)\le\bar V(y)+\bar b s$, so $\E_y\int_0^t|\gen V(Y_s)|\dd s<\infty$.
Nonexplosion, this uniform integrability, and domination of the drift integrals allow passage to the limit in the stopped martingales in $L^1$.
Their limit is therefore a martingale. \hfill
\end{proof}

\subsection{Proof of the stationary It\^o identity}\label{app:stationary-ito-proof}

Allowing a nonzero oblique derivative adds a regulator term to It\^o's formula.
The occupation identity \cref{eq:regulator-occupation} expresses its stationary expectation through the queue marginal.

\begin{proof}[Proof of \Cref{prop:stationary-ito}]
The proof of \Cref{prop:generator-criterion}, with the regulator term retained, gives the stopped It\^o formula.
Start with law $\pi$ and take expectations.
For each fixed horizon $T$, boundedness of $f$ gives $f(Y_{T\wedge\tau_m})\to f(Y_T)$ in $L^1$ as $m\to\infty$.
Stationarity gives $\E_\pi\int_0^T|\gen f(Y_t)|\dd t=T\pi(|\gen f|)<\infty$.
By \cref{eq:regulator-occupation}, the expected absolute regulator integral is at most $T\int_0^\infty|f_i(0,q)+f_q(0,q)|q\pi_Q(\dd q)<\infty$.
These integrable path functionals dominate the stopped integrals.
By stationarity, $\E_\pi f(Y_T)=\E_\pi f(Y_0)$, which gives \cref{eq:stationary-ito-boundary}.

For the second assertion, choose a smooth cutoff $\chi$ equal to one on $[0,1]$ and zero on $[2,\infty)$, and set $h_N(q)=h(q)\chi(q/N)$, $N\ge1$.
The functions $h_N$ and their first two derivatives satisfy \cref{eq:stationary-exponential-test-class} with a constant independent of $N$ and converge pointwise to their counterparts for $h$.
The bounded functions $f_N(i,q)=\e^{-si}h_N(q)$ satisfy the first assertion.
For fixed $s>0$, their $\dd t$ integrands are bounded in absolute value by $C_s(1+q)\e^{\theta q}$, using $i\e^{-si}\le1/(\mathrm e s)$.
Their boundary terms are bounded by $C_s q\e^{\theta q}$.
A jump in idleness produces the remainder $h_N(q)\e^{-si}(\e^{-sz}-1+sz)$, whose absolute value is bounded by $C_s\e^{\theta q}z^2$ on $0<z\le1$ and $C_s\e^{\theta q}(1+z)$ on $z>1$.
For each fixed $q$, the small- and large-jump bounds are integrable in $z$ against $\nu_{\mathrm A}(\dd z)$.
Choose $\theta<\bar\theta<\theta_c$.
Since $(1+q)\e^{\theta q}\le C_{\theta,\bar\theta}\e^{\bar\theta q}$ and $\E_\pi\e^{\bar\theta Q}<\infty$ by \Cref{lem:stationary-moment-bounds}, these estimates give integrable bounds, independent of $N$, for the endpoint terms and the $\dd t$ and $\dd U$ integrals.
Dominated convergence in the identities for $f_N$ proves the assertion for $f$.
Finally,
\[
\gen f=(\Phi(s)+si)\e^{-si}h(q)+q\e^{-si}\{sh(q)-h'(q)\},
\qquad
f_i(0,q)+f_q(0,q)=h'(q)-sh(q).
\]
Substitution into \cref{eq:stationary-ito-boundary} gives \cref{eq:stationary-product-test}. \hfill
\end{proof}

\section{Further properties of the invariant law}\label{app:further-invariant-law}

\subsection{Conditional Laplace transform of idleness}\label{app:conditional-idleness-transform}

Recall that $g_Q$ is the stationary queue density and $\Xi(u)=\kappa_\alpha u^\alpha/\alpha-\beta u$.
For fixed $s>0$, we invert the joint-transform formula \cref{eq:joint-transform-reconstruction} in $r$ to identify the density of the weighted measure $\E_\pi[\e^{-sI};Q\in\dd q]$.
Dividing by the positive queue density gives the conditional Laplace transform of idleness.

\begin{corollary}[Conditional Laplace transform of idleness]\label[corollary]{cor:conditional-idleness-transform}
For Lebesgue-almost every $q>0$, simultaneously for all $s>0$,
\begin{equation}\label{eq:conditional-idleness-transform}
\E_\pi[\e^{-sI}\mid Q=q]=1-\frac{\e^{\Xi(s)}}{g_Q(q)}\int_0^q\frac{s}{v}\,\Xi'\!\left(\frac{sq}{v}\right)\e^{-\Xi(sq/v)-sq\log(q/v)}g_Q(v)\dd v.
\end{equation}
\end{corollary}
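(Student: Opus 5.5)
The plan is to derive \cref{eq:conditional-idleness-transform} by inverting the Laplace transform in $r$, with $s>0$ held fixed, in the reconstruction formula of \Cref{thm:joint-transform-reconstruction}. Write $m_s(q)\,\dd q:=\E_\pi[\e^{-sI};Q\in\dd q]$. This measure is dominated by $\pi_Q$, so by \Cref{prop:joint-stationary-density} it has a density $m_s\le g_Q$, and $F(s,r)=\int_0^\infty\e^{-rq}m_s(q)\dd q$. The claim is then equivalent to
\[
g_Q(q)-m_s(q)=D_s(q):=\e^{\Xi(s)}\int_0^q\frac{s}{v}\,\Xi'\!\left(\frac{sq}{v}\right)\e^{-\Xi(sq/v)-sq\log(q/v)}g_Q(v)\dd v
\]
for almost every $q$. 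By uniqueness of Laplace transforms, it suffices to show $G(r)-F(s,r)=\int_0^\infty\e^{-rq}D_s(q)\dd q$ for all $r\ge0$.

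\textbf{Integration by parts.} In \cref{eq:joint-transform-reconstruction}, the integrand is $\e^{-\Xi(u)}\frac{\dd}{\dd u}G(\rho_{s,r}(u))$. Integrating by parts, using $\rho_{s,r}(s)=r$, $0\le G\le1$ and $\e^{-\Xi(u)}\to0$, gives
\[
G(r)-F(s,r)=\e^{\Xi(s)}\int_s^\infty\Xi'(u)\e^{-\Xi(u)}G(\rho_{s,r}(u))\dd u.
\]
Next I insert $G(\rho)=\int_0^\infty\e^{-\rho v}g_Q(v)\dd v$, noting that
\[
\e^{-\rho_{s,r}(u)v}=\exp\{-r\,uv/s-(uv/s)\,s\log(u/s)\}.
\]

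\textbf{Change of variables.} On $\{u>s,\ v>0\}$ I change variables from $(u,v)$ to $(q,v)$ with $q=uv/s$. For fixed $v$ this gives $\dd u=(s/v)\dd q$, the region becomes $\{0<v<q\}$, $u=sq/v$, and $\log(u/s)=\log(q/v)$. Applying Fubini, the double integral becomes exactly $\int_0^\infty\e^{-rq}D_s(q)\dd q$. Fubini is justified because $|\Xi'(u)|\le C(1+u^{\alpha-1})$ while $\e^{-\Xi(u)}$ decays faster than exponentially and $g_Q$ is a probability density. Hence the absolute integrand is integrable, uniformly for $r\ge0$, since $\rho_{s,r}\ge0$ on $[s,\infty)$.

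\textbf{Conclusion and main obstacle.} Laplace uniqueness gives $m_s=g_Q-D_s$ almost everywhere, and dividing by $g_Q>0$ gives the formula for each fixed $s$ off a null set $N_s$. The step I expect to need the most care is the phrase ``simultaneously for all $s>0$''. I would handle it as follows. Take a regular conditional law of $I$ given $Q$, so that the left side is continuous in $s$ for every $q$. Take the union of the null sets $N_s$ over rational $s$. Then show that $s\mapsto D_s(q)$ is continuous for each fixed $q$ outside a null set, by dominated convergence in $v$ with $s$ in compact subintervals of $(0,\infty)$; the needed domination again comes from the superexponential decay of $\e^{-\Xi(sq/v)}$ as $v\downarrow0$. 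Equality on the rationals then extends to all $s>0$ by continuity.
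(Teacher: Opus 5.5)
Your proposal is correct and follows the paper's proof essentially step for step. Both arguments integrate the reconstruction formula by parts, insert the queue density and apply Fubini with the substitution $u=sq/v$, and then use Laplace uniqueness and division by $g_Q$. The passage to all $s>0$ is also the same: a union of null sets over rational $s$, plus continuity of both sides in $s$ by dominated convergence, since $u=sq/v\ge s$ keeps $\frac{s}{v}\,\lvert\Xi'(u)\rvert\,\e^{-\Xi(u)}$ bounded.
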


\begin{proof}
Fix a regular conditional law of $I$ given $Q$.
For $s>0$, the finite measure $\E_\pi[\e^{-sI};Q\in\dd q]$ has density $g_Q(q)\E_\pi[\e^{-sI}\mid Q=q]$ and Laplace transform $F(s,r)$, $r\ge0$.
Let $T_s(q)$ denote $\e^{\Xi(s)}$ times the integral in \cref{eq:conditional-idleness-transform}.
We show that this density equals $g_Q(q)-T_s(q)$.
Recall that $\rho_{s,r}(u)=u(r/s+\log(u/s))$ for $u\ge s>0$ and $r\ge0$.
For every $r\ge0$, integration by parts in \cref{eq:joint-transform-reconstruction} gives
\[
F(s,r)=G(r)-\e^{\Xi(s)}\int_s^\infty
\Xi'(u)\e^{-\Xi(u)}G(\rho_{s,r}(u))\dd u.
\]
Write $G(\rho_{s,r}(u))=\int_0^\infty\e^{-v\rho_{s,r}(u)}g_Q(v)\dd v$.
The resulting double integral is absolutely convergent because $\rho_{s,r}(u)\ge0$ and $\int_s^\infty|\Xi'(u)|\e^{-\Xi(u)}\dd u<\infty$, so Fubini's theorem allows us to integrate first in $u$.
At fixed $v>0$, set $u=sq/v$, so $\dd u=(s/v)\dd q$ and $u\ge s$ becomes $q\ge v$.
Since $v\rho_{s,r}(sq/v)=rq+sq\log(q/v)$, changing the order of integration over $0<v\le q$ yields
\[
\begin{aligned}
F(s,r)
&=G(r)-\e^{\Xi(s)}\int_0^\infty\e^{-rq}
\int_0^q\frac{s}{v}\,\Xi'\!\left(\frac{sq}{v}\right)
\e^{-\Xi(sq/v)-sq\log(q/v)}g_Q(v)\dd v\dd q\\
&=\int_0^\infty\e^{-rq}[g_Q(q)-T_s(q)]\dd q.
\end{aligned}
\]
The triangle inequality, Tonelli's theorem, and the substitution $u=sq/v$ give
\[
\int_0^\infty |T_s(q)|\dd q
\le \e^{\Xi(s)}\int_s^\infty
|\Xi'(u)|\e^{-\Xi(u)}G(\rho_{s,0}(u))\dd u \le \e^{\Xi(s)}\int_s^\infty
|\Xi'(u)|\e^{-\Xi(u)}\dd u<\infty,
\]
where $0\le G(\rho_{s,0}(u))\le1$ because $\rho_{s,0}(u)=u\log(u/s)\ge0$ for $u\ge s$.
Since $\int_0^\infty g_Q(q)\dd q=1$, the function $g_Q-T_s$ is integrable.
Thus $g_Q(q)\E_\pi[\e^{-sI}\mid Q=q]$ and $g_Q(q)-T_s(q)$ have the same Laplace transform for all $r\ge0$.
Uniqueness of Laplace transforms gives equality of these densities for Lebesgue-almost every $q>0$.
By \Cref{prop:joint-stationary-density}, $g_Q(q)>0$ almost everywhere, so division by $g_Q(q)$ proves \cref{eq:conditional-idleness-transform} for this fixed $s$.

We have proven the identity for each fixed $s>0$, with an exceptional null set of $q$-values that may depend on $s$.
To obtain the simultaneous assertion in the corollary, take the union of these exceptional sets over positive rational $s$, together with the null set where $g_Q$ is not finite and positive.
For fixed $q>0$, dominated convergence makes $T_s(q)$ continuous in $s$ on $(0,\infty)$.
Indeed, for $s\in[a,b]\subset(0,\infty)$, the absolute integrand defining $T_s(q)$ is bounded by $C_{a,b,q}g_Q(v)$, since $u=sq/v\ge a$ and $\sup_{u\ge a}u|\Xi'(u)|\e^{-\Xi(u)}<\infty$.
The conditional Laplace transform is also continuous in $s$, so the identity extends to every $s>0$ outside this common null set. \hfill
\end{proof}

\subsection{Joint moments and boundary singularities}\label{app:moment-refinements}

Multiplying $W_{p,\theta}(i,q)=(1+i)^p\e^{\theta q}$ by $1+i^{-a}$ introduces a boundary singularity when $a>0$.
The weighted small-idleness asymptotic \cref{eq:weighted-boundary-probability}, with $h(q)=\e^{\theta q}$, determines the resulting joint moment domain.
\begin{corollary}[Joint moment domain with a boundary singularity]\label[corollary]{cor:singular-moment-domain}
For all nonnegative $p,a,\theta$,
\begin{equation*}
\E_\pi[(1+I)^p(1+I^{-a})\e^{\theta Q}]<\infty
\quad\Longleftrightarrow\quad
p<\alpha,\quad a<\alpha-1,\quad\theta<\theta_c.
\end{equation*}
\end{corollary}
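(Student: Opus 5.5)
The plan is to treat necessity and sufficiency separately, reducing each to marginal results already proved.

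For necessity, the weight dominates each of its factors, since $(1+I)^p(1+I^{-a})\e^{\theta Q}\ge\max\{(1+I)^p\e^{\theta Q},\,I^{-a}\}$. Finiteness therefore forces $p<\alpha$ and $\theta<\theta_c$ by \Cref{thm:invariant-moments}. It also forces $\E_\pi I^{-a}<\infty$, which gives $a<\alpha-1$ by \Cref{cor:real-idleness-moments}. For $a=0$ this last condition holds automatically because $\alpha>1$.

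For sufficiency, fix $p<\alpha$, $0<a<\alpha-1$ and $\theta<\theta_c$; the case $a=0$ is just \Cref{thm:invariant-moments}. Split the expectation on $\{I>1\}$ and $\{I\le1\}$. On $\{I>1\}$ the weight is at most $2(1+I)^p\e^{\theta Q}$, which is integrable by \Cref{thm:invariant-moments}. On $\{I\le 1\}$ the weight is bounded by $2^{p}(1+I^{-a})\e^{\theta Q}$. The term $\e^{\theta Q}$ is again integrable, so it remains to show that $\E_\pi[I^{-a}\e^{\theta Q};I\le1]<\infty$.

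To handle this term, apply the Tonelli layer-cake identity with the weight $h(Q)=\e^{\theta Q}$ kept inside:
\[
\E_\pi[I^{-a}h(Q);I\le1]=\E_\pi[h(Q);I\le1]+a\int_0^1x^{-a-1}\E_\pi[h(Q);I\le x]\dd x .
\]
By \cref{eq:weighted-boundary-probability} with $h(q)=\e^{\theta q}$, which is admissible in the proof of \Cref{thm:boundary-asymptotics} for $\theta<\theta_c$, the quantity $\E_\pi[h(Q);I\le x]$ is asymptotic to $x^{\alpha-1}\E_\pi[Q\e^{\theta Q}]/(\kappa_\alpha\Gamma(\alpha))$ as $x\downarrow0$. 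The constant is finite because $\E_\pi[Q\e^{\theta Q}]<\infty$. Hence $\E_\pi[h(Q);I\le x]\le Cx^{\alpha-1}$ on some $(0,x_0]$, and the trivial bound $\E_\pi\e^{\theta Q}$ holds on $[x_0,1]$. The integrand is then at most $Cx^{\alpha-a-2}$ near zero, which is integrable exactly when $a<\alpha-1$.

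The main point needing care is the one-sided boundedness near zero. The limit \cref{eq:weighted-boundary-probability} is stated only for fixed multiples $x\varepsilon$, but we use it in the form with $x=1$ and $\varepsilon$ varying; the existence of this limit already implies the uniform bound $\sup_{0<\varepsilon\le x_0}\varepsilon^{1-\alpha}\E_\pi[\e^{\theta Q};I\le\varepsilon]<\infty$. No further obstacle is expected, since the exponential-weight admissibility of $h$ was already checked in the proof of \Cref{thm:boundary-asymptotics}.
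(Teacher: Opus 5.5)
Your proposal is correct and follows essentially the same route as the paper: sufficiency by splitting at $I=1$, using \Cref{thm:invariant-moments} on $\{I>1\}$ and, on $\{I\le1\}$, the layer-cake identity together with the weighted small-idleness asymptotic \cref{eq:weighted-boundary-probability} for $h(q)=\e^{\theta q}$; necessity by domination of the marginal factors, where your appeal to \Cref{cor:real-idleness-moments} for $I^{-a}$ matches the paper's use of the two idleness tail limits. Your remark that existence of the limit gives a uniform bound $\E_\pi[\e^{\theta Q};I\le x]\le Cx^{\alpha-1}$ near zero makes explicit a step the paper leaves implicit.
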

\begin{proof}
Fix $\theta<\theta_c$.
By \cref{eq:weighted-boundary-probability}, $\E_\pi[\e^{\theta Q};I\le x]$ is asymptotic to a positive finite constant times $x^{\alpha-1}$ as $x\downarrow0$.
For $a>0$, Tonelli's theorem gives
\[
\E_\pi[I^{-a}\e^{\theta Q};I\le1]
=\E_\pi[\e^{\theta Q};I\le1]+a\int_0^1x^{-a-1}\E_\pi[\e^{\theta Q};I\le x]\dd x.
\]
The small-idleness asymptotic makes this finite exactly when $a<\alpha-1$.
On $I\le1$, $(1+I)^p$ is bounded; on $I>1$, $1+I^{-a}\le2$.
\Cref{thm:invariant-moments} therefore proves sufficiency, including $a=0$.
Necessity follows because the weight dominates the marginal factors $(1+I)^p$, $I^{-a}$, and $\e^{\theta Q}$.
The corresponding expectations are infinite at their critical parameter values by the two tail limits and \Cref{thm:invariant-moments}. \hfill
\end{proof}

\subsection{Critical queue moments and transform asymptotics}\label{app:critical-queue-moments}

\begin{corollary}[Queue integrability at the critical exponential rate]\label[corollary]{cor:critical-queue-weights}
For every $r\in\R$,
\[
\begin{aligned}
\E_\pi[\e^{\theta_cQ}(1+Q)^{-r}]<\infty\quad\Longleftrightarrow\quad r>1+\alpha/2,\\
\E_\pi[I\e^{\theta_cQ}(1+Q)^{-r}]<\infty\quad\Longleftrightarrow\quad r>\alpha/2.
\end{aligned}
\]
\end{corollary}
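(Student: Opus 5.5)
Both equivalences reduce to tail integration against the sharp queue tail \cref{eq:queue-sharp-tail}. For the first, I use the Stieltjes identity for a nonnegative weight $w(q):=\e^{\theta_cq}(1+q)^{-r}$: writing $\overline F_Q(x)=\Pp_\pi(Q>x)$,
\[
\E_\pi w(Q)=w(0)+\int_0^\infty w'(x)\overline F_Q(x)\dd x,
\]
valid by Tonelli whenever $w'$ is eventually positive (for large $x$, $w'(x)=w(x)[\theta_c-r/(1+x)]>0$; on a compact initial interval everything is bounded, so I split at a large $x_0$ and apply the identity on $[x_0,\infty)$). Since $w'(x)\sim\theta_c\e^{\theta_cx}x^{-r}$ and $\overline F_Q(x)\sim C_\pi x^{\alpha/2}\e^{-\theta_cx}$ with $C_\pi\in(0,\infty)$, the integrand is asymptotic to $\theta_cC_\pi x^{\alpha/2-r}$. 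It is integrable at infinity exactly when $r>1+\alpha/2$. This settles the first line for all real $r$, including negative $r$, where divergence is immediate.

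For the second line, I use the idleness-weighting identity \cref{eq:idleness-weighting} with the nonnegative function $\varphi=w$. This gives
\[
\E_\pi[Iw(Q)]=\beta\E[w(V_{\mathrm u}Q_\pi)]=\beta\E_\pi\bar w(Q),
\qquad
\bar w(q):=\frac1q\int_0^q w(v)\dd v .
\]
Laplace-type asymptotics give $\bar w(q)\sim \e^{\theta_cq}q^{-r-1}/\theta_c$ as $q\to\infty$, since the integral $\int_0^q\e^{\theta_cv}(1+v)^{-r}\dd v$ is dominated by its upper end. Moreover, $\bar w$ is bounded on compacts and is comparable to $\e^{\theta_cq}(1+q)^{-r-1}$ on $[1,\infty)$ with constants depending only on $r$. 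Hence $\E_\pi[Iw(Q)]<\infty$ if and only if $\E_\pi[\e^{\theta_cQ}(1+Q)^{-(r+1)}]<\infty$. By the first line, this holds if and only if $r+1>1+\alpha/2$, that is, $r>\alpha/2$.

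The main point needing care is the two-sided comparison of $\bar w$ with $\e^{\theta_cq}q^{-r-1}$ uniformly on $[1,\infty)$ for every real $r$. For the upper bound I would use L'H\^opital, or the bound $\int_0^q\e^{\theta_cv}(1+v)^{-r}\dd v\le C(1+q)^{-r}\e^{\theta_cq}$ obtained by splitting at $q/2$. The lower bound comes from integrating over $[q-1,q]$. The remaining steps use only \Cref{thm:marginal-tails} and \Cref{thm:idleness-weighting}.
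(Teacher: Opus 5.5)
Your proof is correct. The first equivalence follows the paper exactly: tail integration of $\e^{\theta_cq}(1+q)^{-r}$ against $\pi_Q$, with the integrand asymptotic to a constant times $q^{\alpha/2-r}$ by \cref{eq:queue-sharp-tail}.

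For the second equivalence you take a genuinely different route. The paper reuses the same tail-integration template with the finite measure $\E_\pi[I;Q\in\dd q]$. Its tail is already known from \cref{eq:idleness-weighted-queue-tail}: $\E_\pi[I;Q>x]\sim(\beta C_\pi/\theta_c)x^{\alpha/2-1}\e^{-\theta_cx}$. That asymptotic is derived in the proof of \Cref{cor:large-queue-idleness} from the idleness-weighting identity, the sharp queue tail, and a dominated-convergence step. You instead apply \Cref{thm:idleness-weighting} to the weight itself, which gives $\E_\pi[Iw(Q)]=\beta\E_\pi\bar w(Q)$ as an identity in $[0,\infty]$. A purely deterministic comparison, $\bar w(q)\asymp\e^{\theta_cq}(1+q)^{-r-1}$ on $[1,\infty)$ with both sides bounded on $[0,1]$, then reduces the second line to the first with $r$ replaced by $r+1$.

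Your version needs no asymptotics for the idleness-weighted measure. It also shows directly why the threshold shifts by exactly one: averaging the exponential weight over $(0,q)$ costs a factor asymptotic to $1/(\theta_cq)$. The paper's version is shorter, given what it has already established, and treats both lines by one uniform computation. In the end the two rest on the same mechanism, since \cref{eq:idleness-weighted-queue-tail} is the idleness-weighting identity applied to $\varphi=\1_{(x,\infty)}$ and then combined with the queue tail.
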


\begin{proof}
Fix $r\in\R$ and write $h(q)=\e^{\theta_cq}(1+q)^{-r}$.
Since
\[
h'(q)=h(q)\left(\theta_c-\frac{r}{1+q}\right)
\sim\theta_c\e^{\theta_cq}q^{-r},
\]
we can choose $q_0>0$ so that $h'(q)>0$ for every $q\ge q_0$.
For a finite positive measure $\mu$ on $\R_+$, tail integration on $(q_0,\infty)$ gives
\[
\int_{(q_0,\infty)}h(q)\mu(\dd q)
=h(q_0)\mu((q_0,\infty))
+\int_{q_0}^\infty h'(q)\mu((q,\infty))\dd q.
\]
First take $\mu=\pi_Q$.
By \cref{eq:queue-sharp-tail}, the last integrand is asymptotic to a positive constant times $q^{\alpha/2-r}$, whose integral is finite exactly when $r>1+\alpha/2$.
Next take $\mu(\dd q)=\E_\pi[I;Q\in\dd q]$, which is a finite measure because $\E_\pi I=\beta$.
By \cref{eq:idleness-weighted-queue-tail}, the corresponding integrand is asymptotic to a positive constant times $q^{\alpha/2-1-r}$, whose integral is finite exactly when $r>\alpha/2$.
At equality in either criterion, the power is $-1$, so the integral diverges.
Finally, $h$ is bounded on $[0,q_0]$ and both measures are finite, so the integrals over this bounded interval are finite.
This proves both assertions. \hfill
\end{proof}

Tail integration gives the divergence rates of the exponential moments of $Q$ and $Q-I$ as $\theta\uparrow\theta_c$.
\begin{corollary}[Near-critical exponential moments]\label[corollary]{cor:critical-queue-transform}
As $\theta\uparrow\theta_c$,
\begin{equation}\label{eq:critical-queue-transform}
\E_\pi\e^{\theta Q}\sim\E_\pi\e^{\theta(Q-I)}
\sim C_\pi\theta_c\Gamma(1+\alpha/2)
(\theta_c-\theta)^{-1-\alpha/2}.
\end{equation}
\end{corollary}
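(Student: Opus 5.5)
We will obtain \cref{eq:critical-queue-transform} from the sharp tails in \cref{eq:queue-sharp-tail} by a standard Abelian (Tauberian-free) argument. Tail integration gives, for $0<\theta<\theta_c$,
\[
\E_\pi\e^{\theta Q}=1+\theta\int_0^\infty\e^{\theta x}\Pp_\pi(Q>x)\dd x,
\]
and, since $Q-I$ can be negative, the analogous identity
\[
\E_\pi\e^{\theta(Q-I)}=\E_\pi[\e^{\theta(Q-I)};Q-I\le0]+\Pp_\pi(Q-I>0)+\theta\int_0^\infty\e^{\theta x}\Pp_\pi(Q-I>x)\dd x.
\]
Both finite-prefix terms are bounded by $2$ uniformly in $\theta$. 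So in each case it suffices to study $\theta\int_0^\infty\e^{\theta x}\overline F(x)\dd x$, where $\overline F$ is the respective tail.

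For the integral, fix $\varepsilon\in(0,1)$ and use \cref{eq:queue-sharp-tail} to choose $x_\varepsilon$ with
\[
(1-\varepsilon)C_\pi x^{\alpha/2}\e^{-\theta_cx}\le\overline F(x)\le(1+\varepsilon)C_\pi x^{\alpha/2}\e^{-\theta_cx},\qquad x\ge x_\varepsilon,
\]
for both tails. The integral over $[0,x_\varepsilon]$ is at most $\theta_c x_\varepsilon\e^{\theta_cx_\varepsilon}$, bounded in $\theta$. On $[x_\varepsilon,\infty)$ the integrand is sandwiched by $(1\pm\varepsilon)C_\pi x^{\alpha/2}\e^{-(\theta_c-\theta)x}$. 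Writing $\lambda=\theta_c-\theta$, we have
\[
\int_0^\infty x^{\alpha/2}\e^{-\lambda x}\dd x=\Gamma(1+\alpha/2)\lambda^{-1-\alpha/2},
\]
while the omitted piece $\int_0^{x_\varepsilon}x^{\alpha/2}\dd x$ stays bounded. Multiplying by $\lambda^{1+\alpha/2}$, letting $\theta\uparrow\theta_c$ (so $\theta\to\theta_c$ in the prefactor), and then letting $\varepsilon\downarrow0$ gives
\[
\lim\lambda^{1+\alpha/2}\E_\pi\e^{\theta Q}=C_\pi\theta_c\Gamma(1+\alpha/2).
\]
The same limit holds for $Q-I$, because the two tails share the constant $C_\pi$.

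The only potential obstacle is the negative part of $Q-I$, which has a heavy left tail $\Pp_\pi(Q-I<-x)\sim(\cA/\alpha^2)x^{-\alpha}$ by \cref{eq:idleness-tail}. This is harmless, since $\e^{\theta(Q-I)}\le1$ on $\{Q-I\le0\}$, so no idleness moments are needed. The blow-up $\lambda^{-1-\alpha/2}\to\infty$ dominates all bounded contributions, which completes the proof.
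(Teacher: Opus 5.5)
Your proposal is correct and follows the paper's proof: tail integration, splitting off a fixed bounded interval, sandwiching the remaining integrand with the sharp tail \cref{eq:queue-sharp-tail} to obtain the gamma integral, and bounding the contribution of the nonpositive part of $Q-I$ by one. You simply write out the $\varepsilon$-sandwich and the limit passage that the paper states tersely.
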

\begin{proof}
For $Q\ge0$, $\E_\pi\e^{\theta Q}=1+\theta\int_0^\infty\e^{\theta x}\Pp_\pi(Q>x)\dd x$.
Use \cref{eq:queue-sharp-tail}, split off a fixed bounded interval, and set $u=(\theta_c-\theta)x$ on its complement.
Upper and lower tail comparisons give the gamma integral in \cref{eq:critical-queue-transform}.
The same argument applies to the right tail of $Q-I$; its nonpositive part contributes at most one to $\E_\pi\e^{\theta(Q-I)}$. \hfill
\end{proof}

These asymptotics also determine the exponential moments weighted by $I$ and by $I\e^{-\theta I}$.
Combining \cref{eq:idleness-weighting,eq:critical-queue-transform} gives
\[
\E_\pi[I\e^{\theta Q}]
=\frac\beta\theta\int_0^\theta\E_\pi\e^{uQ}\dd u
\sim\beta C_\pi\Gamma(\alpha/2)(\theta_c-\theta)^{-\alpha/2} \qquad(\theta\uparrow\theta_c).
\]
For $0<\theta<\theta_c$, take $s=\theta$ and $h(q)=\e^{\theta q}$ in \cref{eq:stationary-product-test}.
Since $sh-h'=0$, the regulator term vanishes and the identity reduces to
\begin{equation}\label{eq:diagonal-transform-identity}
\E_\pi[I\e^{\theta(Q-I)}]
=(\beta-\kappa_\alpha\theta^{\alpha-1})\E_\pi[\e^{\theta(Q-I)}].
\end{equation}
Combining \cref{eq:diagonal-transform-identity,eq:critical-queue-transform} with $\beta-\kappa_\alpha\theta^{\alpha-1}\sim\mu_c(\theta_c-\theta)/\theta_c$ gives
\[
\E_\pi[I\e^{\theta(Q-I)}]
\sim\mu_c C_\pi\Gamma(1+\alpha/2)(\theta_c-\theta)^{-\alpha/2} \qquad(\theta\uparrow\theta_c).
\]
Consequently,
\begin{equation*}
\frac{\E_\pi[I\e^{\theta(Q-I)}]}{\E_\pi[I\e^{\theta Q}]}
\longrightarrow\frac{\mu_c\alpha}{2\beta}
=\frac{\alpha(\alpha-1)}2
\qquad(\theta\uparrow\theta_c).
\end{equation*}
Thus $\e^{-\theta I}$ changes the leading constant of the idleness-weighted exponential moments.

\clearpage
\bibliographystyle{plainnat}
\bibliography{references}

\end{document}